\def\Z{{\mathbb Z}}
\def\Qq{{\mathbb Q}}
\def\R{{\mathbb R}}
\def\C{{\mathbb C}}
\def\HH{{\mathfrak H}}
\def\sq{\hbox{\rlap{$\sqcap$}$\sqcup$}}
\def\qed{\ifmmode\sq\else{\unskip\nobreak\hfil
         \penalty50\hskip1em\null\nobreak\hfil\sq
         \parfillskip=0pt\finalhyphendemerits=0\endgraf}\fi}
\newtheorem{theorem}{Theorem}
\newtheorem{lemma}[theorem]{Lemma}
\newtheorem{prop}[theorem]{Proposition}
\newtheorem{cor}[theorem]{Corollary}
\newtheorem{conj}{Conjecture}
\newtheorem{df}[theorem]{Definition}
\numberwithin{theorem}{section}
\numberwithin{equation}{section}
\title[Isomorphism between Jacobi forms of index $D_{2n+1}$]{
Isomorphism between Jacobi forms of index $D_{2n+1}$ and
elliptic modular forms of level $2$
}
\author{Shuichi Hayashida}
\date{\today}
\subjclass[2020]{Primary 11F50; Secondary 11F37, 11F20, 11F46}
\keywords{Jacobi forms, half-integral weight, eta-function, Siegel modular forms}
\begin{document}

\begin{abstract}
This paper has three main objectives:
(i) To establish an isomorphism between Jacobi forms of index $D_{2n+1}$ (lattice index) and elliptic modular forms of level $2$.
(ii) To provide an explicit formula for the Fourier coefficients of Jacobi--Eisenstein series of index $D_{2n+1}$.
(iii) To construct a holomorphic modular form of weight $3/2$ and level $8$ (and $4$) 
from the Zagier--Eisenstein series $\mathscr{F}$ of weight $3/2$ and level $4$.
Moreover, we show that 
the four functions $E^*_2$, $\eta^3$, $\theta^3$ and $\mathscr{F}$ have essentially the same Hecke eigenvalue $1+p$ for any odd prime $p$,
where $E^*_2$ is the non-holomorphic Eisenstein series of weight $2$, 
$\eta$ is the Dedekind eta-function and $\theta$ is the usual theta function.
This fact arises as a special case of the isomorphism of (i).
\end{abstract}

\maketitle

\setcounter{tocdepth}{1}
\tableofcontents

\section{Introduction}

\subsection{}

The isomorphism between Jacobi forms
of integer index and a certain subspace of elliptic modular forms of specific level
is given in~\cite{SZ}.
The present paper has three primary objectives.
The first is to generalize this isomorphism from the case of Jacobi forms of integer index to those 
 of index $D_r$ ($r$ odd), 
 where $D_r$ denotes the root lattice.
 More precisely, we will show that
the space of Jacobi forms of weight $k+\frac{r+1}{2}$ and index $D_r$
corresponds to a certain subspace of elliptic modular forms of weight $2k$
and level $2$ as Hecke modules (Theorems~\ref{thm:main}, \ref{thm:J_decom} and \ref{thm:J_new}). 
This isomorphism was previously conjectured by A.~Mocanu~\cite{Mocanu}.

The second objective is to provide an explicit formula for the Fourier coefficients of the Jacobi--Eisenstein series of index $D_r$
($r$ odd).
This formula is expressed as a linear combination of the Fourier coefficients of Cohen--Eisenstein series (Theorem~\ref{thm:Jacobi_Eisenstein}).

The third objective is to construct a holomorphic modular form of weight $3/2$ and level $8$ (and $4$) 
from the Zagier--Eisenstein series $\mathscr{F}$ of weight $3/2$ and level $4$ (Theorem~\ref{thm:E_3_2_8}).
Moreover, we show that the four functions $\eta^3$, $\theta^3$, $\mathscr{F}$ and $E^*_2$ possess essentially 
the same Hecke eigenvalue $1+p$ for any odd prime $p$,
where $\eta$ is the Dedekind eta-function, $\theta$ is the usual theta function 
and $E^*_2$ is the non-holomorphic Eisenstein series of weight~$2$ (Theorems~\ref{thm:k_1} and \ref{thm:E_3_2_8}).

We also provide explicit formulas for the maps from Jacobi forms of index $D_r$ ($r$ odd) to elliptic modular forms of level $2$
(Theorems~\ref{thm:map_J_even} and \ref{thm:map_J_odd}).

The remainder of this section is devoted to a more precise description of these results.

\subsection{Isomorphisms}

We denote by $J_{k,D_r}$ the space of Jacobi forms of weight $k$ of index $D_r$ 
(see $\S$\ref{ss:JF_Dr} and $\S$\ref{ss:basic}
 for the definition).
Note that Jacobi forms of lattice index are isomorphic to those of matrix index (see $\S$\ref{ss:basic}).

Let $M_k(N)$ (resp. $M_k^{\text{new}}(N)$) be the space of elliptic modular forms (resp. newforms) of weight $k \in \mathbb{N}$ with respect to $\Gamma_0(N)$.
For $\epsilon \in \{+, - \}$, we define
\begin{equation*}
  M_k^{\text{new},\epsilon}(N)
  := 
  \{
    f \in M_k^{\text{new}}(N) \ : \ (f|_k\left( \begin{smallmatrix} 0 & -1 \\ N & 0 \end{smallmatrix} \right))(\tau) := f\!\left(\frac{-1}{N\tau}\right)  (N^{1/2} \tau)^{-k} = \epsilon i^{-k} f(\tau)
  \}.
\end{equation*}
Recall that if $f \in M_k^{\text{new},\epsilon}(N)$, then the completed $L$-function $L^*(f,s)$ of $f$ satisfies
the functional equation $L^*(f,s) = \epsilon L^*(f,k-s)$ (see \cite[p. 136]{SZ}).

It is shown in \cite[Main Thm.]{SZ} that the isomorphism
\begin{equation*}
  J_{k+1,m}^{\text{new}} \cong M_{2k}^{\text{new},-}(m)
\end{equation*}
holds for any natural numbers $m$ and $k$, 
where $J_{k+1,m}^{\text{new}}$ denotes the subspace of newforms of Jacobi forms of weight $k+1$ and index $m$.

In particular, the following isomorphism of Hecke modules holds:
\begin{equation}\label{isom:EZ_2}
  J_{k+1,D_1} \cong
   J_{k+1,2}  \cong   M_{2k}^{\text{new},-}(2) \oplus M_{2k}^{-}(1), 
\end{equation}
where $J_{k+1,2}$ denotes the space of Jacobi forms of weight $k+1$ and index $2$.

A.~Mocanu~\cite{Mocanu} posed the following conjecture,
which generalizes the isomorphism (\ref{isom:EZ_2}).
This conjecture is supported by numerical evidence from the Euler factors of Jacobi forms in $J_{k,D_r}$.
\begin{conj}[{\cite[Conj. 3.30]{Mocanu}}]\label{conj:mocanu}
For every $k \geq 2$, the following isomorphisms of Hecke modules hold:
\begin{align*}
  J_{k+2,D_3} &\cong M_{2k}^{\text{new},-}(2) \oplus M_{2k}^{+}(1), \\
  J_{k+3,D_5} &\cong M_{2k}^{\text{new},+}(2) \oplus M_{2k}^{-}(1), \\
  J_{k+4,D_7} &\cong M_{2k}^{\text{new},+}(2) \oplus M_{2k}^{+}(1).
\end{align*}
\end{conj}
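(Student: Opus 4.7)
The plan is to realize all three isomorphisms as special cases of a general theta-correspondence argument for Jacobi forms of index $D_r$ with $r$ odd, generalizing the Skoruppa--Zagier map from scalar index to lattice index. I would begin with the theta expansion: every $\phi \in J_{k,D_r}$ decomposes as $\phi(\tau,z) = \sum_{\mu \in D_r^*/D_r} h_\mu(\tau) \theta_{D_r,\mu}(\tau,z)$, and the tuple $(h_\mu)$ transforms as a vector-valued modular form for the Weil representation $\rho_{D_r}$ of $\mathrm{SL}_2(\Z)$. For $r$ odd, the discriminant group $D_r^*/D_r$ is cyclic of order $4$, generated by $s=(1/2,\ldots,1/2)$ with $q(s) = r/4 \pmod{2\Z}$ and $2s \equiv e_1$; thus the isomorphism class of $\rho_{D_r}$ depends only on $r \bmod 8$, which is exactly what produces three distinct cases in Mocanu's conjecture.

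Next, I would exploit the filtration of $D_r^*/D_r$ by the unique index-$2$ subgroup $\{0, e_1\}$. Vector-valued forms supported on this subgroup correspond (via a standard Bruinier--Bundschuh-type dictionary applied to the restricted discriminant form) to scalar modular forms of level $1$, while forms transverse to it produce genuine level-$2$ newforms. The two signs $\pm$ and $\epsilon$ are then read off from the eigenvalues of the Fricke involution $w_2$ acting on each piece, and these eigenvalues are controlled by an explicit Gauss-sum computation using the $S$-matrix of $\rho_{D_r}$: the relevant phase factor is the Milgram-type quantity $e^{-\pi i r/4}$, whose combination with the weight $k+(r+1)/2$ yields $\epsilon = -$ for $r \equiv 3 \pmod 8$ and $\epsilon = +$ for $r \equiv 5, 7 \pmod 8$, and the sign of the level-$1$ piece toggles analogously. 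This reproduces the three sign combinations listed in the statement for $r = 3, 5, 7$.

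Hecke equivariance would be verified by computing the action of $T_p$ (for $p$ an odd prime) on both sides of the theta correspondence: on the Jacobi side using the double-coset description of the Hecke operator for lattice index, and on the modular-form side using the classical $T_p$ on $M_{2k}(\Gamma_0(2))$. The matching follows essentially from the Skoruppa--Zagier computation in \S5 of~\cite{SZ}, once the theta correspondence is in place. The main obstacle I anticipate is the sign bookkeeping in the middle step: correctly identifying the Fricke eigenvalue across the chain of isomorphisms requires a careful Gauss-sum evaluation for $\rho_{D_r}$, together with a precise matching of the old/new decomposition between levels $1$ and $2$, and it is this computation that discriminates the residue classes $r \equiv 3, 5, 7 \pmod 8$ and ultimately produces the three distinct right-hand sides in the conjecture.
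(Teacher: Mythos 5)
Your starting point (theta decomposition, so that $(h_\mu)$ is a vector-valued form for the Weil representation of the discriminant module $D_r^\sharp/D_r\cong\Z/4\Z$, with everything depending only on $r\bmod 8$) agrees with the paper, but the central mechanism you propose for producing the two summands does not work. The subgroup $\{0,e_1\}$ of the discriminant group is \emph{not} isotropic (the class of $e_1$ has $\beta(e_1)=\tfrac12\notin\Z$), and in any case ``being supported on a subgroup'' is not a condition preserved by the $S$-matrix of the Weil representation, so it does not cut out a Hecke-stable subspace; concretely, the Jacobi--Eisenstein series $E_{k+\frac{r+1}{2},D_r}$ is an \emph{old} form (it corresponds to the level-$1$ Eisenstein series) and yet has all four components $h_0,h_1,h_2,h_3$ nonzero, so the old/new dichotomy is simply not a support condition on $D_r^\sharp/D_r$. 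In the paper the level-$1$ summand is instead constructed by a genuinely different device: one takes the Ikeda (Duke--Imamo\=glu) lift of $f\in M_{2k}^{\epsilon_1}(1)$ to degree $r+1$ and extracts the Fourier--Jacobi coefficient of index $M_r$, which gives a Hecke-equivariant injection $M_{2k}^{\epsilon_1}(1)\to J_{k+\frac{r+1}{2},D_r}$ defining the oldforms; no Bruinier--Bundschuh-type restriction argument replaces this.

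The second, equally serious gap is that your chain never crosses from half-integral to integral weight. Since $\operatorname{rank}D_r$ is odd, the components $h_\mu$ have weight $k+\tfrac12$, so a Fricke/Gauss-sum computation and ``the Skoruppa--Zagier computation in \S5 of \cite{SZ}'' cannot by themselves land you in $M_{2k}(\Gamma_0(2))$: you need a Shimura-type correspondence with a newform theory attached, and this is where the actual content of the proof lies. The paper separates the two weight parities: for $k+\frac{r+1}{2}$ even it identifies $J_{k+\frac{r+1}{2},D_r}$ with the plus-type space $M_{k+\frac12}^{+,-r}(8)$ via $\sum_j h_j(8\tau)$ and then invokes Ueda--Yamana's newform theory and Shimura correspondence $S_{k+\frac12}^{new,+,-r}(8)\cong S_{2k}^{new,\epsilon_2}(2)$ (which is also what pins down the sign $\epsilon_2$, through the $U_k(4)$-eigenvalue rather than a Milgram phase alone); for $k+\frac{r+1}{2}$ odd it identifies the single surviving component with $\eta^{3(8-r)}M_{k-12+\frac{3r+1}{2}}(1)$ and invokes Yang's theorem, with a twisted Hecke operator needed to make the map equivariant. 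Without these Shimura-correspondence inputs (or a proof of them), and with the old/new split mislocated, the proposal does not yet yield Mocanu's conjecture; the dimension/multiplicity-one bookkeeping matching Jacobi oldforms with $S_{k+\frac12}^{old,+}(8)\cong 2\cdot S_{2k}(1)$ also has to be done, as in the paper's Corollary 6.9.
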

It is known that for odd natural numbers $r$ and $r'$, if $r \equiv r' \!\! \mod 8$, then there is an isomorphism
$J_{k+\frac{r+1}{2},D_{r}} \cong J_{k+\frac{r'+1}{2},D_{r'}}$ as Hecke modules (see Corollary~\ref{cor:J_r_general}).

For $D \equiv 0,1 \!\! \mod 4$, let $\left( \frac{D}{\cdot} \right)$ denote the Kronecker symbol.
For $D \in \{-4,8,-8\}$, these symbols satisfy
\begin{equation*}
\left( \frac{-4}{r} \right) = (-1)^{\frac{r-1}{2}}, \quad
\left( \frac{8}{r} \right) = (-1)^{\frac{r^2-1}{8}}, \quad
\left( \frac{-8}{r} \right) =  \left( \frac{-4}{r} \right) \left( \frac{8}{r} \right)
\end{equation*}
for odd integer $r$, while
$\left( \frac{-4}{r} \right) = \left( \frac{8}{r} \right) = \left( \frac{-8}{r} \right) = 0$ if $r$ is even.
We define
\begin{equation*}
\epsilon_2 := - \left( \frac{-8}{r} \right) \quad \text{and} \quad \epsilon_1 := - \left( \frac{-4}{r} \right).
\end{equation*}
Then, the isomorphisms in Conjecture~\ref{conj:mocanu} can be unified and rewritten as
\begin{equation*}
   J_{k+\frac{r+1}{2}, D_r} \cong M_{2k}^{\text{new},\epsilon_2}(2) \oplus M_{2k}^{\epsilon_1}(1).
\end{equation*}

Our main theorem (Theorem~\ref{thm:main}) establishes that this isomorphism indeed holds for all odd $r$.
\begin{theorem}\label{thm:main}
Conjecture~\ref{conj:mocanu} is true.
\end{theorem}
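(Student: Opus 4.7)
The overall plan is to imitate the Skoruppa--Zagier construction~\cite{SZ}, replacing the scalar index by the lattice index $D_r$. By Corollary~\ref{cor:J_r_general}, if $r \equiv r' \pmod 8$ then $J_{k+(r+1)/2, D_r} \cong J_{k+(r'+1)/2, D_{r'}}$ as Hecke modules, so it suffices to handle one representative in each odd residue class modulo $8$ (namely $r = 3, 5, 7$; the case $r = 1$ is already the classical isomorphism~(\ref{isom:EZ_2})). The central tool is the theta decomposition of Jacobi forms of lattice index, followed by a Shimura-type lift from half-integral weight vector-valued modular forms to integral weight scalar modular forms of level $2$.

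First I would apply the theta expansion
\[
\phi(\tau, z) = \sum_{\lambda \in D_r^\sharp / D_r} h_\lambda(\tau)\, \Theta_{D_r, \lambda}(\tau, z)
\]
to a general $\phi \in J_{k+(r+1)/2, D_r}$. Because $D_r$ has rank $r$, the coefficient functions $\{h_\lambda\}$ assemble into a vector-valued modular form of weight $k+\frac{1}{2}$ for the (conjugate) Weil representation attached to the discriminant form $D_r^\sharp / D_r \cong \Z/4\Z$. The quadratic form on this discriminant group is determined by $r \bmod 8$, and this is the source of the Kronecker-symbol factors $\left(\frac{-4}{r}\right)$ and $\left(\frac{8}{r}\right)$ appearing in $\epsilon_1$ and $\epsilon_2$. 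The orthogonal automorphisms of $D_r$ (sign changes and coordinate permutations) force identifications of the form $h_{-\lambda} = \pm h_\lambda$ and collapse the four components $h_0, h_1, h_2, h_3$ to a pair $(H_0, H_1)$ of essentially two independent functions.

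Next, I would assemble $(H_0, H_1)$ into a single scalar modular form $\Phi(\phi) \in M_{2k}(\Gamma_0(2))$ via an explicit Shimura-type correspondence modeled on the map of~\cite{SZ}. Although the naive level produced by the Weil representation of a cyclic group of order $4$ is $8$, the $\Z/4\Z$-structure of the discriminant form together with the Weyl identifications from the previous step should force the level to drop all the way to $2$. Having obtained $\Phi$, I would compute the action of the Fricke involution $W_2$ on the new-form component of $\Phi(\phi)$ and identify its eigenvalue with $\epsilon_2 = -\left(\frac{-8}{r}\right)$, while the old-form component pulled back from level $1$ inherits the eigenvalue $\epsilon_1 = -\left(\frac{-4}{r}\right)$ under the corresponding involution at level $1$. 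Hecke-equivariance at odd primes follows formally from the compatibility of Hecke operators with the theta decomposition, and a dimension comparison completes the proof that $\Phi$ is an isomorphism.

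The main obstacle is the second step above: constructing the explicit map $\Phi$ and verifying that its image actually sits at level $2$, with the correct Atkin--Lehner eigenvalue on each summand. Carefully evaluating the Gauss sums in the $D_r$-Weil representation across each residue class $r \equiv 3, 5, 7 \pmod 8$ --- each giving a different pair $(\epsilon_1, \epsilon_2)$ --- is the delicate calculation that goes beyond the scalar-index setting of~\cite{SZ}. Once the map $\Phi$ is in place with the correct properties, injectivity, surjectivity, and Hecke equivariance should proceed routinely.
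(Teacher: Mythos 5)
Your opening step (theta decomposition with respect to $D_r^\sharp/D_r\cong\Z/4\Z$, then a Shimura-type correspondence to weight $2k$, level $2$, with Hecke equivariance at odd primes) is indeed the paper's strategy for the \emph{new} part, but as written your sketch has two genuine gaps. First, you never construct the old part. The summand $M_{2k}^{\epsilon_1}(1)$ in Conjecture~\ref{conj:mocanu} is not obtained in the paper by decomposing an image inside $M_{2k}(2)$; it is obtained by \emph{producing} Jacobi forms: one composes the Ikeda lift $S_{2k}(1)\to S_{k+\frac{r+1}{2}}(\mathrm{Sp}_{r+1}(\Z))$ with a Fourier--Jacobi coefficient at the maximal matrix $M_r$, and one must prove this coefficient is a nonzero Hecke eigenform (Lemma~\ref{lem:cusp_to_jacobi_cusp}, which uses Kohnen's nonvanishing of fundamental-discriminant coefficients and Ikeda's Fourier coefficient formula). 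Without this, the statement that ``the old-form component pulled back from level $1$ inherits the eigenvalue $\epsilon_1$'' has no content, and your concluding ``dimension comparison'' is circular: no dimension formula for $J_{k+\frac{r+1}{2},D_r}$ is available a priori (in the paper such dimensions are consequences of the theorem, not inputs). In the paper surjectivity is instead proved constructively (the explicit inverse $J$ of $\mathcal{J}_{r,k}^{even}$ in Lemma~\ref{lem:J_even_surj}), and the matching of $J^{cusp,new}$ with newforms uses strong multiplicity one together with the dimension identity $\dim S_{k+\frac12}^{+}(8)=\dim S_{k+\frac12}^{new,+}(8)+2\dim S_{2k}(1)$ from~\cite{UY} (Corollary~\ref{cor:J_S_half}).

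Second, your uniform treatment misses the parity dichotomy that governs the whole argument. For $k+\frac{r+1}{2}$ odd one has $h_0=h_2=0$ and $h_1=-h_3$, so a single component survives and the natural target is $\eta^{3(8-r)}M_{k-12+\frac{3r+1}{2}}(1)$ (eta-type forms of level $1$, with \emph{twisted} Hecke operators), after which one invokes Yang's correspondence (Theorem~\ref{thm:YY}); for $k+\frac{r+1}{2}$ even one has $h_1=h_3$ and three independent components, which are packaged into a single form of the level-$8$ plus space $M_{k+\frac12}^{+,-r}(8)$ (Proposition~\ref{prop:J_M_even}), and the passage to $S_{2k}^{new,\epsilon_2}(2)$ is Ueda--Yamana's newform theory for that space (Theorem~\ref{thm:UY}), where the sign $\epsilon_2$ comes from the $U_k(4)$-eigenvalue and the Fricke eigenvalue relation $c=-2^{1-k}a_f(2)$. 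So your claims that the four components collapse to a pair $(H_0,H_1)$ and that the half-integral-weight level ``drops all the way to $2$'' are not accurate: the relevant half-integral weight spaces sit at level $8$ (or are level-$1$ eta-type forms), and level $2$ appears only on the integral-weight side of the Shimura correspondence. Unless you are prepared to re-prove the analogues of~\cite{UY} and~\cite{YY} from scratch, the step you label as routine is precisely where the substance lies.
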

Theorem~\ref{thm:main} is a direct consequence of Theorems~\ref{thm:J_decom} and \ref{thm:J_new}.
More specifically, Theorem~\ref{thm:J_decom} describes the decomposition of the space of Jacobi forms,
while Theorem~\ref{thm:J_new} identifies the newform part with the corresponding space of elliptic modular forms.

To prove Theorem~\ref{thm:main},  we first consider the composition of two maps:
\begin{equation*}
 M_{2k}(1) \xrightarrow{\text{Ikeda lift}} 
 \left\{ \begin{smallmatrix}
  \text{Siegel modular forms of} \\
  \text{weight $k+\frac{r+1}{2} \in 2\mathbb{N}$ and degree $r+1$} \end{smallmatrix}\right\}
  \xrightarrow{\text{FJ coeff.}} J_{k+\frac{r+1}{2},D_r},
\end{equation*}
where the first map is the Ikeda lift, 
while the second map extracts a Fourier--Jacobi coefficient.
We refer to the image of this composition as the space of  ``oldforms'' in $J_{k+\frac{r+1}{2},D_r}$.
We establish the following result: 
\begin{theorem}\label{thm:J_decom}
For $r \in \{1, 3, 5, 7\}$,
we have the decomposition
\begin{equation*}
  J_{k+\frac{r+1}{2},D_r} = J_{k+\frac{r+1}{2},D_r}^{\text{new}} \oplus J_{k+\frac{r+1}{2},D_r}^{\text{old}}
\end{equation*}
where the following isomorphisms of Hecke modules hold:
\begin{equation}\label{id:oldforms}
  \begin{aligned}
  J_{k+1,D_1}^{\text{old}} &  \cong &  J_{k+3,D_5}^{\text{old}} & \cong & 
   M_{2k}^{-}(1)
   &=&
  \begin{cases}
    \{ 0 \}, & \text{ if $k$ is even}, \\
    M_{2k}(1), & \text{ if $k$ is odd},
  \end{cases}
  \\
    J_{k+2,D_3}^{\text{old}} & \cong & J_{k+4,D_7}^{\text{old}} & \cong & 
     M_{2k}^{+}(1)
    &=&
    \begin{cases}
    M_{2k}(1), & \text{ if $k \geq 2$ is even}, \\
    \{ 0 \} , & \text{ if $k$ is odd}.
  \end{cases}
    \end{aligned}
\end{equation}
Here, $J_{k,D_r}^{\text{new}}$ denotes the orthogonal complement of $J_{k,D_r}^{\text{old}}$ in $J_{k,D_r}$ with respect to
the Petersson scalar product 
$($see \cite[p.~303]{Mocanu_JN}, \cite[p.~68]{ali} or \cite[p.~13]{Mocanu} for the definition of the Petersson scalar product$)$.
\end{theorem}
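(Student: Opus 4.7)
The plan is to realize $J_{k+\frac{r+1}{2},D_r}^{old}$ as the image of the two-step map described right after the theorem statement (Ikeda lift to Siegel modular forms of degree $r+1$, followed by extraction of a Fourier--Jacobi coefficient at the $D_r$-index), to verify that this map is Hecke-equivariant and injective onto $M_{2k}^{\epsilon_1}(1)$, and then to obtain the direct sum from the Petersson scalar product.

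First I would make the construction explicit: for a Hecke eigenform $f \in M_{2k}(1)$ (cuspidal or Eisenstein), let $F_f$ be its Ikeda lift to a Siegel modular form of degree $r+1$ and weight $k + \frac{r+1}{2}$. The Ikeda construction is nontrivial exactly when $k + \frac{r+1}{2}$ is even, i.e.\ $k \equiv \frac{r+1}{2}\!\!\mod 2$; case-checking gives
\[
  r \in \{1,5\}: \ k \text{ odd}; \qquad r \in \{3,7\}: \ k \text{ even}.
\]
A direct computation with the Fricke involution $W_1$ on $M_{2k}(1)$ gives $\epsilon = i^{2k} = (-1)^k$, so the vanishing/nonvanishing pattern of $M_{2k}^{\epsilon_1}(1)$, with $\epsilon_1 = -\left(\frac{-4}{r}\right)$, matches precisely the pattern for the Ikeda lift in these four cases. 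Let $\phi_f \in J_{k+\frac{r+1}{2},D_r}$ denote the Fourier--Jacobi coefficient of $F_f$ at index $D_r$; by definition $J_{k+\frac{r+1}{2},D_r}^{old}$ is the span of the $\phi_f$.

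The assignment $f \mapsto \phi_f$ will be Hecke-equivariant because the Satake parameters of $F_f$ at an odd prime $p$ are a prescribed product of those of $f$, and because taking a Fourier--Jacobi coefficient at a fixed lattice index intertwines the Siegel Hecke algebra with the Jacobi Hecke algebra of index $D_r$. The main obstacle, and the technical heart of the theorem, is \emph{injectivity}, i.e.\ that $\phi_f \neq 0$ whenever $F_f \neq 0$. I would attack this by computing an explicit low-norm Fourier coefficient of $\phi_f$ and expressing it as a product of (a local density associated to the quadratic form $D_r$) and (a nonzero Fourier coefficient of $f$); the relevant local densities at the root lattice $D_r$ are classically nonzero, so this will force $\phi_f \neq 0$. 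Strong multiplicity one on $M_{2k}(1)$ then promotes injectivity on eigenforms to injectivity of the full linear map, so $f \mapsto \phi_f$ becomes a Hecke-module isomorphism from $M_{2k}^{\epsilon_1}(1)$ onto $J_{k+\frac{r+1}{2},D_r}^{old}$, as claimed.

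To finish, I would define $J_{k+\frac{r+1}{2},D_r}^{new}$ as the orthogonal complement of $J_{k+\frac{r+1}{2},D_r}^{old}$ with respect to the Petersson scalar product. Since the Jacobi Hecke operators are self-adjoint on the cuspidal part, this complement is automatically Hecke-stable; the Eisenstein component requires separate but routine bookkeeping, which can be arranged using the explicit Jacobi--Eisenstein series formula from Theorem~\ref{thm:Jacobi_Eisenstein}. Combining these ingredients gives the decomposition $J_{k+\frac{r+1}{2},D_r} = J^{new} \oplus J^{old}$ together with the Hecke-module isomorphisms in (\ref{id:oldforms}).
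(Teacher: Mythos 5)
Your overall strategy is the same as the paper's: define $J_{k+\frac{r+1}{2},D_r}^{old}$ as the image of the composition (Ikeda lift of weight $k+\frac{r+1}{2}$, degree $r+1$, followed by the Fourier--Jacobi coefficient at the index $M_r$ attached to $D_r$), match the parity condition $(-1)^k=\epsilon_1$, and take $J^{new}$ to be the Petersson orthogonal complement, exactly as in Proposition~\ref{prop:J_old} and the proof of Theorem~\ref{thm:J_decom}.

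However, the step you yourself call the technical heart --- injectivity, i.e.\ $\phi_f\neq 0$ --- would not go through as you describe it. The Fourier coefficients of the Ikeda lift of $f$ are \emph{not} products of a local density of the lattice $D_r$ with Fourier coefficients of $f$: they are built from the Fourier coefficients $c_h(|D|)$ of the half-integral weight form $h\in S_{k+n+\frac12}^{+}(4)$ corresponding to $f$ under the Shimura correspondence, multiplied by Laurent polynomials in the Satake parameters of $f$. So nonvanishing cannot be forced by ``classically nonzero local densities''; the genuine input is the existence of a fundamental discriminant $D\equiv 0 \bmod 4$ with $c_h(|D|)\neq 0$, which is Kohnen's theorem \cite{Ko} used in Lemma~\ref{lem:cusp_to_jacobi_cusp}, combined with the observation that the numbers $8(n'-\beta(r'))$ occurring for index $D_r$ cover all positive multiples of $4$, so such a discriminant is actually visible in $\phi_{M_r}$. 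A second, smaller gap: Hecke equivariance of $f\mapsto\phi_f$ is not a formal intertwining of the Siegel and Jacobi Hecke algebras through Fourier--Jacobi extraction (for a general index a Fourier--Jacobi coefficient of a Siegel eigenform need not be a Jacobi eigenform); the paper uses that $M_r$ is \emph{maximal}, Boecherer's result \cite{Bo} that the $M$-th Fourier--Jacobi coefficient of a Siegel--Eisenstein series is a Jacobi--Eisenstein series for maximal $M$, and the fact that the Ikeda lift inherits the corresponding relations among Fourier coefficients (Lemma~\ref{lem:DDI_FJ}); this also settles the Eisenstein component that you relegate to ``routine bookkeeping''. With these two repairs your outline becomes the paper's proof.
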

Theorem~\ref{thm:J_decom} will be proved in Section~\ref{sec:Ikeda_lift}.

Note that the dimension of the space of Jacobi--Eisenstein series is at most $1$ $($see Lemma~\ref{lem:dim_Eisenstein}$)$.
Furthermore, it is well known that the Jacobi--Eisenstein series is orthogonal to the space of Jacobi cusp forms 
with respect to the Petersson scalar product $($see~\cite[Thm. 4.1]{Mocanu_JN}$)$. 

If the weight $k + \frac{r+1}{2}$ is odd, the fact that $J_{k+\frac{r+1}{2},D_r}^{\text{old}} = \{ 0 \}$
follows directly from (\ref{id:oldforms}).

In the case $k=0$ of Theorem~\ref{thm:J_decom},
it is known that $J_{2,D_3} = \{ 0\}$ and $J_{4,D_7} = \C E_{4,D_7}$
for a certain Jacobi form $E_{4,D_7}$
 (see \cite[Thm.~3.29]{Mocanu}, which refers to results in~\cite{BS}).
Such a Jacobi form $E_{4,D_7}$ can also be obtained as a Fourier--Jacobi coefficient of the Siegel theta series
of weight $4$ and degree $8$.

We denote by $S_k(N)$ (resp. $S_k^{\text{new}}(N)$) the subspace of cusp forms in $M_k(N)$ (resp. $M_k^{\text{new}}(N)$).
For $k \geq 2$, the equality $M_{2k}^{\text{new}}(2) = S_{2k}^{\text{new}}(2)$ holds (see Lemma~\ref{lem:E_2k_new}).

Similarly, we denote by $J_{k,D_r}^{\text{cusp}}$ the space of Jacobi cusp forms in $J_{k,D_r}$ (see Section~\ref{sec:Jacobi_lattice} for the definition)
and define $J_{k,D_r}^{\text{cusp, new}} := J_{k,D_r}^{\text{cusp}} \cap J_{k,D_r}^{\text{new}}$.
In Lemma~\ref{lem:cusp_cusp_new}, 
we will show that $J_{k+\frac{r+1}{2},D_r}^{\text{new}} \subset J_{k+\frac{r+1}{2},D_r}^{\text{cusp}}$ for $k \geq 2$.
Consequently, we have the equality $J_{k+\frac{r+1}{2},D_r}^{\text{cusp, new}} = J_{k+\frac{r+1}{2},D_r}^{\text{new}}$ for $k \geq 2$.

To prove Theorem~\ref{thm:main}, we next consider the composition of the following two isomorphisms:
\begin{equation}\label{id:comp_new}
  J_{k+\frac{r+1}{2},D_r}^{\text{cusp, new}}
  \cong
  \left\{ 
  \begin{smallmatrix}
   \text{certain modular forms} \\
   \text{of weight $k+\frac12$}
  \end{smallmatrix}
  \right\}
  \cong
  S_{2k}^{\text{new},\epsilon_2}(2). 
\end{equation}
The second isomorphism in this composition is an analogue of the Shimura correspondence, 
which has been established in~\cite{UY} and \cite{YY}. 

Let $S_{k+\frac12}^+(8)$ be the Kohnen plus space of level $8$ (see $\S$\ref{ss:half_int_theta} 
for the definition).
This space admits the following decomposition:
$S_{k+\frac12}^+(8) = S_{k+\frac12}^{+,-1}(8) \oplus S_{k+\frac12}^{+,-5}(8)$ for odd $k$,
and $S_{k+\frac12}^+(8) = S_{k+\frac12}^{+,-3}(8) \oplus S_{k+\frac12}^{+,-7}(8)$ for even $k$
(see $\S$\ref{ss:half_int_theta}, for the notation).
Any form $g = \sum_n c_g(n) q^n$ in $S_{k+\frac12}^{+,-r}(8)$ is characterized by the condition that
\begin{equation*}
c_g(n) = 0 \mbox{ unless } n \equiv 0, 4, -r \!\! \mod 8.
\end{equation*}
The subspace $S_{k+\frac12}^{+,-r}(8)$ was introduced in~\cite[Prop. 4]{UY}. 
Let $S_{k+\frac12}^{\text{new},+,-r}(8)$ be the subspace of newforms in $S_{k+\frac12}^{+,-r}(8)$ 
(see $\S$\ref{ss:Shimura_even} for the definition of newforms in $S_{k+\frac12}^+(8)$, which follows \cite{UY}).
The Shimura correspondence $S_{k+\frac12}^{\text{new},+,-r}(8) \cong S_{2k}^{\text{new},\epsilon_2}(2)$ 
was essentially established in~\cite[Thm. 1]{UY}
(see also Theorem~\ref{thm:UY}).

On the other hand, 
the Shimura correspondence $\eta^{3r} M_{k+\frac{1-3r}{2}}(1) \cong S_{2k}^{\text{new},\epsilon_2}(2)$ was essentially established 
in~\cite[Thm.~2]{YY} (see Theorem~\ref{thm:YY}).
Here, $\eta$ denotes the Dedekind eta-function defined by $\eta(\tau) := q^{\frac{1}{24}} \prod_{n=1}^\infty (1-q^n)$ with $q = e^{2 \pi i \tau}$.
Note that on the space $\eta^{3r} M_{k+\frac{1-3r}{2}}(1)$, we employ twisted Hecke operators 
(see $\S$\ref{ss:Hecke_op} for their definition).

By combining these two types of compositions in (\ref{id:comp_new}), we obtain: 
\begin{theorem}\label{thm:J_new}
The following isomorphisms of Hecke modules hold:
\begin{itemize}
\item[(i)] If $k \geq 2$ is even, then 
\begin{equation*}
  \begin{aligned}
     J_{k+1,D_1}^{\text{cusp, new}}  \cong  J_{k+2,D_3}^{\text{cusp,new}} & \cong  S_{k+\frac12}^{\text{new},+,-3}(8)
      \cong  \eta^{21} M_{k-10}(1)  \cong  S_{2k}^{\text{new},-}(2), \\
     J_{k+3,D_5}^{\text{cusp, new}}  \cong  J_{k+4,D_7}^{\text{cusp, new}} & \cong  S_{k+\frac12}^{\text{new},+,-7}(8)
      \cong  \eta^{9} M_{k-4}(1)  \cong  S_{2k}^{\text{new},+}(2)     .
\end{aligned}
\end{equation*}
\item[(ii)] If $k \geq 3$ is odd, then
\begin{equation*}
  \begin{aligned}
     J_{k+1,D_1}^{\text{cusp, new}}  \cong  J_{k+2,D_3}^{\text{cusp, new}} & \cong  S_{k+\frac12}^{\text{new},+,-1}(8)
      \cong  \eta^{15} M_{k-7}(1)  \cong  S_{2k}^{\text{new},-}(2), \\
     J_{k+3,D_5}^{\text{cusp, new}}  \cong  J_{k+4,D_7}^{\text{cusp, new}} & \cong  S_{k+\frac12}^{\text{new},+,-5}(8)
      \cong  \eta^{3} M_{k-1}(1)  \cong  S_{2k}^{\text{new},+}(2) .
  \end{aligned}
\end{equation*}
\end{itemize}
\end{theorem}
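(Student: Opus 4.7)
The plan is to factor the theorem as a composition of three Hecke-equivariant identifications: a theta decomposition sending lattice-index Jacobi forms to scalar-valued half-integral weight forms in a Kohnen-type plus space of level $8$, followed by the Shimura-type correspondence of Theorem~\ref{thm:UY}, together with the eta-product model from Theorem~\ref{thm:YY}. In particular, I would follow the two-step scheme (\ref{id:comp_new}) already sketched in the introduction, using Theorem~\ref{thm:J_decom} to separate out the old part from the start.

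First I would carry out the theta expansion of $\phi \in J_{k+\frac{r+1}{2},D_r}$. For odd $r$ the discriminant group $D_r^\#/D_r$ is cyclic of order $4$, so $\phi$ decomposes as $\phi(\tau,z) = \sum_{\mu \in D_r^\#/D_r} h_\mu(\tau)\,\theta_{D_r,\mu}(\tau,z)$, where $(h_\mu)_\mu$ is a vector-valued modular form of weight $k+\frac12$ transforming under the Weil representation attached to $D_r$. Averaging over the orthogonal automorphisms of $D_r^\#/D_r$ collapses this vector into a scalar-valued form $g$ lying in the Kohnen plus space $S_{k+\frac12}^+(8)$. A direct Gauss-sum computation on the discriminant form of $D_r$ should pin the image down to the summand $S_{k+\frac12}^{+,-r'}(8)$ with $r' \in \{1,3,5,7\}$ determined by $r \bmod 4$ and the parity of $k$ as in the statement; the Atkin--Lehner-type sign $\epsilon_2 = -\bigl(\tfrac{-8}{r}\bigr)$ emerges naturally from this calculation. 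The map is Hecke-equivariant for the Jacobi Hecke operators on the left and the Shimura-type Hecke operators of~\cite{UY} on the right, and combining with Theorem~\ref{thm:J_decom} together with a dimension count identifies $J_{k+\frac{r+1}{2},D_r}^{cusp,new}$ with $S_{k+\frac12}^{new,+,-r'}(8)$.

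Second, I would invoke Theorem~\ref{thm:UY} to obtain $S_{k+\frac12}^{new,+,-r'}(8) \cong S_{2k}^{new,\epsilon_2}(2)$, and Theorem~\ref{thm:YY} to obtain $\eta^{3r'} M_{k+\frac{1-3r'}{2}}(1) \cong S_{2k}^{new,\epsilon_2}(2)$, both as Hecke modules; this closes the right-hand portions of the two chains in the theorem. The horizontal identifications $J_{k+1,D_1}^{cusp,new} \cong J_{k+2,D_3}^{cusp,new}$ and $J_{k+3,D_5}^{cusp,new} \cong J_{k+4,D_7}^{cusp,new}$ are then automatic, because in each row both Jacobi spaces map isomorphically onto the same plus-space summand. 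The main obstacle will be the first step: verifying that the Weyl-averaged theta decomposition lands in exactly the correct summand $S_{k+\frac12}^{+,-r'}(8)$ with the predicted $r'$ amounts to a delicate computation of Gauss sums and signs attached to the quadratic form on $D_r^\#/D_r$, and one must simultaneously track Hecke-equivariance and the matching of new subspaces. These checks are technically involved, but should be modeled closely on the integer-index case of~\cite{SZ}.
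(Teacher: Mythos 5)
There is a genuine gap, and it sits exactly where you declare the horizontal identifications ``automatic.'' Your plan sends \emph{every} $\phi \in J_{k+\frac{r+1}{2},D_r}$ to a scalar form in the level-$8$ plus space by averaging the theta components over the automorphisms of $D_r^{\sharp}/D_r$. This only works when the Jacobi weight $k+\frac{r+1}{2}$ is even. In each row of the theorem exactly one of the two Jacobi spaces has \emph{odd} weight (e.g.\ $J_{k+1,D_1}$ for $k$ even, $J_{k+2,D_3}$ for $k$ odd), and for odd weight the theta decomposition forces $h_0=h_2=0$ and $h_1=-h_3$, so any symmetrized scalar combination such as $\sum_j h_j(8\tau)$ vanishes identically; there is no direct map from the odd-weight Jacobi space into $S^{+,-r'}_{k+\frac12}(8)$, and since $1\not\equiv 3$ and $5\not\equiv 7 \bmod 8$, Corollary~\ref{cor:J_r_general} cannot be used to transfer the problem to the even-weight lattice either. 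Consequently your claim that ``in each row both Jacobi spaces map isomorphically onto the same plus-space summand'' is false as stated, and the left-most isomorphisms of each chain are not established by your argument.

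The paper handles the two parities by genuinely different maps and then glues the chains only through the common endpoint $S_{2k}^{new,\epsilon_2}(2)$. For even Jacobi weight it uses $\mathcal{J}^{even}_{r,k}:\phi\mapsto\sum_j h_j(8\tau)$ into $M^{+,-r}_{k+\frac12}(8)$ (Proposition~\ref{prop:J_M_even}), matches new spaces by the dimension and multiplicity-one argument of Corollary~\ref{cor:J_S_half}, and applies Theorem~\ref{thm:UY}; this is the part your proposal captures correctly. For odd Jacobi weight it instead uses $\mathcal{J}^{odd}_{r,k}:\phi\mapsto h_1$, landing in the $\eta$-type space $\eta^{3(8-r)}M_{k-12+\frac{3r+1}{2}}(1)$ on $\mathrm{Mp}(2,\Z)$ with character (Proposition~\ref{prop:J_M_odd}), where Hecke equivariance requires the \emph{twisted} operators $\tilde T(p^2)$ (the extra factor $\left(\frac{-4}{p}\right)$ comes from $p v_1\equiv\left(\frac{-4}{p}\right)v_1 \bmod D_r$), and then invokes Theorem~\ref{thm:YY}; one also uses that in odd weight $J^{cusp,new}=J^{cusp}$ by Theorem~\ref{thm:J_decom} and Lemma~\ref{lem:dim_Eisenstein}. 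Your proposal cites Theorem~\ref{thm:YY} only as a freestanding identification of $\eta^{3r'}M_{\cdot}(1)$ with $S_{2k}^{new,\epsilon_2}(2)$, but without the map $\phi\mapsto h_1$ and its twisted Hecke compatibility the odd-weight Jacobi spaces are never connected to anything, so the theorem as stated is not proved.
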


Theorem~\ref{thm:J_new} follows from Theorems~\ref{thm:odd_weight_case} and \ref{thm:even_weight_case}.

\subsection{Fourier coefficients of Jacobi--Eisenstein series}
In  Theorem~\ref{thm:Jacobi_Eisenstein}, 
we provide an explicit formula for the Fourier coefficients of the Jacobi--Eisenstein series $E_{k+\frac{r+1}{2},D_r}$ in $J_{k+\frac{r+1}{2},D_r}$.
While such formulas have already been obtained in \cite[Thm. 5.1]{Mocanu_JN} for general lattice index $\underline{L}$, 
we provide a more precise formula for the case where $\underline{L} = D_r$ with odd $r$.

Let $\mathscr{H}_k$ denote the Cohen--Eisenstein series of weight $k+\frac12$ and level $4$ (see $\S$\ref{sec:Fourier_coeff_Jacobi_Eisenstein}).
From $\mathscr{H}_k$, we construct a modular form $\mathscr{H}^*_k$ of weight $k+\frac12$ and level $8$ 
by applying the $U_k(4)$-operator, which was introduced in~\cite{UY}.
Consequently, the Fourier coefficients of $\mathscr{H}^*_k$ are given as linear combinations of those of $\mathscr{H}_k$.
On the other hand, the Jacobi--Eisenstein series $E_{k+\frac{r+1}{2},D_r}$ corresponds to $\mathscr{H}^*_k$ 
under the isomorphism (see Proposition~\ref{prop:J_M_even}).
Thus, the Fourier coefficients of $E_{k+\frac{r+1}{2},D_r}$ can be expressed as linear combinations of 
the Fourier coefficients of $\mathscr{H}_k$ (see Theorem~\ref{thm:Jacobi_Eisenstein}).

\subsection{Modular forms of weight $3/2$ and level $8$}\label{ss:half_3_2_and_2}
We also establish Conjecture~\ref{conj:mocanu} for the case $k=1$ (Theorem~\ref{thm:k_1}).
In particular, we show that the four functions $\eta^3$, $\theta^3$, $\mathscr{F}$ and $E^*_2$
possess essentially the same Hecke eigenvalue $1+p$ for any odd prime $p$.
Here, we define the non-holomorphic Eisenstein series of weight $2$ as
$E^*_2(\tau) := 1 - 24 \sum_{n\geq1} \sigma(n) q^n - \frac{3}{\pi v} $, where $v = \text{Im}(\tau)$, 
and where $\sigma(n)$ denotes the divisor sum function.
Furthermore,
$\theta(\tau) := \sum_{n \in \Z} q^{n^2}$
is the usual theta function, 
and $\mathscr{F}$ denotes the Zagier--Eisenstein series of weight $3/2$ 
and level $4$ (see $\S$\ref{sec:Fourier_coeff_Jacobi_Eisenstein} for the definition).

Let $M_{k+\frac12}^{+,-r}(8)$ denote the subspace of the Kohnen plus space $M_{k+\frac12}^{+}(8)$
of weight $k+\frac12$ and level $8$
(see $\S$~\ref{ss:half_int_theta} for its definition).
Note that $S_{k+\frac12}^{+,-r}(8) = M_{k+\frac12}^{+,-r}(8) \cap S_{k+\frac12}^{+}(8)$.
We show that 
a holomorphic modular form $E_{3/2}^{(8)} \in M_{3/2}^{+,-5}(8) $ of weight $3/2$ and level $8$
can be constructed from $\theta^3$ as well as from $\mathscr{F}$ (see Theorem~\ref{thm:E_3_2_8}).

\begin{theorem}\label{thm:k_1}
As an analogue of Theorem~\ref{thm:J_new} for the case $k=1$, the following holds:
\begin{gather*}
  \dim J_{2,D_1} = \dim J_{3,D_3} = \dim M_{3/2}^{+,-1}(8) = \dim \left(\eta^{15} M_{-6}(1)\right) = \dim M_{2}^-(2) = 0, \\
  \dim J_{4,D_5}^{\text{new}} = \dim J_{5,D_7}^{\text{cusp,new}} 
  = \dim M_{3/2}^{+,-5}(8) 
  = \dim \left(\eta^3 M_0(1)\right)
  = \dim M_{2}^{+}(2) 
  = 1.
\end{gather*}
Thus, we have the following isomorphisms of Hecke modules:
\begin{equation*}
  J_{4,D_5}^{\text{new}} \cong J_{5,D_7}^{\text{cusp,new}} 
  \cong  M_{3/2}^{+,-5}(8) 
  \cong  \eta^3 M_0(1) 
  \cong  M_{2}^{+}(2).
\end{equation*}
Here, the spaces are spanned by the functions
$E_{4,D_5}$, $\psi_{5,D_7}$,  $E_{3/2}^{(8)}$, $\eta^3$, and $E_{2}^{(2)}$, respectively, 
such that $M_2^{+}(2) = \C E_2^{(2)}$ 
with $E_2^{(2)}(\tau) := 2 E^*_2(2\tau) - E^*_2(\tau)$. 
Note that $E_{4,D_5}$, $E_{3/2}^{(8)}$ and $E_2^{(2)}$ are neither Jacobi cusp forms nor (elliptic) cusp forms,
whereas $\psi_{5,D_7}$ is a Jacobi cusp form.
In particular, it follows that $J_{4,D_5}^{\text{new}} \, {\not \subset}\, J_{4,D_5}^{\text{cusp,new}}$.

Consequently, the five functions $E_{4,D_5}$, $\psi_{5,D_7}$, $E_{3/2}^{(8)}$, $\eta^3$ and $E_2^{(2)}$
have the same Hecke eigenvalue $1+p$ for any odd prime $p$.
\end{theorem}
Theorem~\ref{thm:k_1} will be proved in $\S$\ref{ss:proof_thm_k_1}.

The Jacobi form $E_{4,D_5} \in J_{4,D_5}$ in Theorem~\ref{thm:k_1} is uniquely determined up to a constant multiple.
We refer to \cite{BS} for the definition of $E_{4,D_5}$ (see also \cite[pp.~76--77]{Mocanu}).
An explicit formula for its Fourier coefficients is given in Theorem~\ref{thm:Jacobi_Eisenstein_k_1}.

The modular form 
$E_{3/2}^{(8)} \in M_{3/2}^{+,-5}(8)$ in Theorem~\ref{thm:k_1} is uniquely determined up to a constant multiple.
Since its constant term is non-zero, 
we normalize $E_{3/2}^{(8)}$ such that $E_{3/2}^{(8)}(\tau) = 1 + \mathcal{O}(q)$.
In Theorem~\ref{thm:E_3_2_8}, we show that
$E_{3/2}^{(8)}$ can be constructed from $\theta^3$ as well as from the Zagier--Eisenstein series $\mathscr{F}$ of weight $3/2$.
By virtue of Proposition~\ref{prop:J_M_even}, the Fourier coefficients of $E_{3/2}^{(8)}$ coincide with those of $E_{4,D_5}$
(up to a suitable normalization).

We now describe the Fourier coefficients of $E_{3/2}^{(8)}$.
For $m \in \mathbb{N}$ and $N \in \Z_{\geq 0}$, we let
\begin{equation*}
  r_m(N) := \#\left\{ (x_1,...,x_m) \in \Z^m \ : \ x_1^2 + \cdots + x_m^2 = N \right\}
\end{equation*}
denote the number of ways to represent $N$ as a sum of $m$ squares.
It is well known that 
for $m \leq 3$, we have $r_m(4N) =r_m(N)$ for any $N \in \mathbb{N}$.
Let $H(N)$ be the standard Hurwitz class number for the discriminant $-N$ (see \cite[p.~273]{cohen}
for the definition).

For a function $f$ on the upper half-plane $\HH$ satisfying $f(\tau+1) = f(\tau)$, 
we define the $U(4)$ operator by
\begin{equation*}
f|U(4) := \frac14 \sum_{a \!\!\!\!  \mod 4} f\!\left(\frac{\tau+a}{4}\right).
\end{equation*}
For a formal power series $f = \sum_{n \in \Z} a(n) q^n$, the operator $\wp_k$, defined by
\begin{equation*}
f|\wp_k := \sum_{(-1)^k n \equiv 0, 1 \!\! \mod 4} a(n) q^n , 
\end{equation*}
was introduced in \cite[p. 2]{UY}.
The operator $U_k(4) := U(4) \wp_k$ was also introduced in \cite[p. 2]{UY}.
Specifically, 
if $f(\tau) = \sum_{n=0}^\infty a(n) q^n$, then the action of $U_k(4)$ is given by
\begin{equation*}
 (f|U_k(4))(\tau) = \sum_{\substack{  n \geq 0 \\ (-1)^k n \equiv 0, 1 \!\!\!\! \mod 4}} a(4n) q^n.
\end{equation*}

\begin{theorem}\label{thm:E_3_2_8}
The modular form $E_{3/2}^{(8)} = 1 + \mathcal{O}(q) \in M_{3/2}^{+,-5}(8)$ admits the following expressions: 
\begin{equation*}
  E_{3/2}^{(8)}(\tau) = \sum_{\begin{smallmatrix} n \geq 0 \\ n \equiv 0, 3 \!\!\!\! \mod 4 \end{smallmatrix}} r_3(n) q^n
   =  \sum_{\begin{smallmatrix} n \geq 0 \\ n \equiv 0, 3 \!\!\!\! \mod 4 \end{smallmatrix}} r_3(4n) q^n
   =  (\theta^3|U_1(4))(\tau).
\end{equation*}
Furthermore, $E_{3/2}^{(8)}$ can be expressed as
\begin{equation*}
  E_{3/2}^{(8)}(\tau) 
  = 
  12 \sum_{\begin{smallmatrix} n \geq 0 \\ n \equiv 0, 3 \!\!\!\! \mod 4 \end{smallmatrix}} (H(4n) - 2 H(n)) q^n
   =  
  12 ( \mathscr{F}|U_1(4) - 2\mathscr{F} )(\tau),
\end{equation*}
where $\mathscr{F}$ is the non-holomorphic modular form of weight $3/2$ introduced in~\cite{zagier}
(see also \S\ref{ss:E_3_2_8} for this definition).

The form $E_{3/2}^{(8)}$ satisfies $E_{3/2}^{(8)} |U_1(4) = E_{3/2}^{(8)}$.

In particular, for any natural number $N$, we have
\begin{equation}\label{id:r3_HH}
 r_3(N) = 12(H(4N) - 2 H(N)),
\end{equation}
where we set $H(N) := 0$
if
$-N \equiv 2, 3 \!\! \mod 4$.
\end{theorem}
Theorem~\ref{thm:E_3_2_8} will be proved 
in~$\S$\ref{ss:E_3_2_8}.

Combining the identity (\ref{id:r3_HH}) with the fact that $\theta(\tau)^3 = \sum_{n=0}^\infty r_3(n) q^n$ yields the following corollary.
\begin{cor}
The holomorphic function
\begin{equation*}
  \sum_{n = 0}^\infty (H(4n) - 2 H(n)) q^n
\end{equation*}
is a modular form of weight $3/2$ and level $4$, which does not belong to the Kohnen plus space.
\end{cor}

The following formula follows from the identities~(\ref{id:r3_HH}) and (\ref{id:H_N_Dff}) in $\S$\ref{ss:cohen_type_level_8},
and it is also consistent with the results in \cite[p.~273--274]{cohen}.
\begin{cor}
For any $N \in \mathbb{N}$, we have
\begin{equation}\label{id:Cohen_rep}
 r_3(N)
 = 12 H(|D| f_1^2) \left(1 - \left( \frac{8}{D} \right) \right),
\end{equation}
where $-N = D f^2$ with a fundamental discriminant $D < 0$ and $f \in \frac12 \mathbb{N}$. 
Here, we write $f = 2^e f_1$ with an odd integer $f_1$
and an exponent $e \in \Z_{\geq -1}$.
Note that if $-N \equiv 2,3 \!\! \mod 4$, then $D \equiv 0 \mod 4$, $f \in \mathbb{N} - \frac12$ and $\left( \frac{8}{D} \right) = 0$.
\end{cor}
Note that the identity~(\ref{id:Cohen_rep}) was derived in \cite[Proposition 5.3.10]{cohen:computation}
for the specific case where $-N < -4$ is a fundamental discriminant.

\subsection{Application to arithmetic functions}
Since $\theta E_{3/2}^{(8)} \in M_2(8)$ and $M_2(8) = \C E_2^{(2)}(\tau) \oplus \C E_2^{(2)}(2\tau) \oplus \C E_2^{(2)}(4\tau)$,
we can determine the Fourier coefficients of $\theta E_{3/2}^{(8)}$ by comparing its initial Fourier coefficients with 
those of the basis of $M_2(8)$. 
This yields the following identity:
\begin{equation*}
  \theta(\tau) E_{3/2}^{(8)}(\tau) = \frac{1}{12} E_2^{(2)}(\tau) - \frac{1}{12} E_2^{(2)}(2\tau) + E_2^{(2)}(4 \tau).
\end{equation*}
This implies that 
for any $N \in \mathbb{N}$, the following identity holds: 
\begin{equation*}
  \frac12 \!\!\!\!\!\!
  \sum_{\begin{smallmatrix} s \in \Z \\ N - s^2 \equiv 0, 3 \!\! \mod 4\end{smallmatrix}}
  \!\!\!\!\!\!
   r_3(N - s^2)
  =
  \sigma(N) - 3\, \sigma\!\left( \frac{N}{2} \right) + 14\, \sigma\!\left( \frac{N}{4} \right) - 24\, \sigma\!\left( \frac{N}{8} \right),
\end{equation*}
where we set $\sigma(M) = 0$ if $M {\not \in} \mathbb{N}$.
In particular, if $N$ is odd, the identity simplifies to
\begin{equation*}
  \sigma(N) = 
   \frac12 
   \!\!\!\!\!\! 
   \sum_{\begin{smallmatrix} s \in \Z \\ N - s^2 \equiv 0, 3 \!\! \mod 4\end{smallmatrix}}
   \!\!\!\!\!\! 
   r_3(N - s^2).
\end{equation*}
By combining this with the well-known formula 
$\sum_{s \in \Z} r_3(N-s^2) = r_4(N) = 8 \sigma(N) - 32 \sigma(N/4)$, we obtain the following formulas:
\begin{equation*}
  \sum_{\substack{s \in \Z \\ s \equiv 1 \!\! \mod 2} } r_3(N-s^2) 
  =
  \begin{cases}
    2 \sigma(N), & \text{if } N \equiv 1 \mod 4, \\
    6 \sigma(N), & \text{if } N \equiv 3 \mod 4
  \end{cases}
\end{equation*}
and
\begin{equation*}
  \sum_{\substack{s \in \Z \\ s \equiv 0 \!\! \mod 2} } r_3(N-s^2) 
  =
  \begin{cases}
    6 \sigma(N), & \text{if } N \equiv 1 \mod 4, \\
    2 \sigma(N), & \text{if } N \equiv 3 \mod 4.
  \end{cases}
\end{equation*}
Note that these equations are equivalent to formulas for the number of representations 
of a given odd integer $N$ as
$N = a_1^2 + a_2^2 + a_3^2 + a_4^2$ , where $a_4$ is restricted to be odd and even, respectively.
Such formulas were originally derived by Liouville in the 19th century (see, e.g., \cite{williams}).

\subsection{Maps from Jacobi forms to elliptic modular forms}
We set $M_{2}^{\text{new},\epsilon_2}(2) := M_{2}^{\epsilon_2}(2)$.
In $\S$\ref{sec:S_d_0} we will provide explicit formulas for the maps from
$J_{k+\frac{r+1}{2},D_r}$ to $M_{2k}^{\text{new},\epsilon_2}(2) \oplus M_{2k}^{\epsilon_1}(1)$
that are compatible with the action of Hecke operators.
These maps are defined as follows: 
\begin{theorem}\label{thm:map_J_even}
Assume that $k + \frac{r+1}{2}$ is an even integer.
For $\phi \in J_{k+\frac{r+1}{2},D_r}$ 
we set
\begin{equation*}
  A\left( N \right)
  :=
  \begin{cases}
    c(n',r'), & \mbox{if $N = 8(n' - \beta(r'))$ and $N \equiv 0, 4 \mod 8$}, \\
    2 c(n',r'), & \mbox{if $N = 8(n' - \beta(r'))$ and $N \equiv -r \mod 8$},
  \end{cases} 
\end{equation*}
where $c(n',r')$ denotes the $(n',r')$-th Fourier coefficient of $\phi$ for $(n',r') \in \Z \times D_r^{\sharp}$
$($see $\S$\ref{ss:JF_Dr} for the notations of $\beta$ and $D_r^\sharp$$)$.

Note that $\sum_{N \equiv 0,4,-r \!\! \mod 8} A(N) q^N \in M_{k+\frac12}^{+,-r}(8)$.

Then, for any fundamental discriminant $(-1)^k d_0$ such that $d_0 > 0$ and 
$d_0 \equiv 0 \mod 4$,
the map $S_{d_0}$ defined by
\begin{equation*}
  S_{d_0}(\phi) :=
  \frac{A(0) L(1-k,\left( \frac{(-1)^k d_0}{ * }\right)) }{2 (1 + \left( \frac{8}{r} \right) 2^k) } 
  +
  \sum_{n=1}^\infty
  \left( 
  \sum_{d|n } \left( \frac{(-1)^k d_0}{d} \right) d^{k-1} A\!\left( \frac{n^2}{d^2} d_0 \right) 
  \right)
  q^n
\end{equation*}
is a linear map
\begin{equation*}
 S_{d_0} \ : \ J_{k+\frac{r+1}{2},D_r} \to M_{2k}^{\text{new},\epsilon_2}(2) \oplus M_{2k}^{\epsilon_1}(1).
\end{equation*}

Furthermore, the map $S_{d_0}$ sends 
\begin{itemize}
\item[$\cdot$]
$J_{k+\frac{r+1}{2},D_r}^{\text{cusp}}$ to $S_{2k}^{\text{new},\epsilon_2}(2) \oplus S_{2k}^{\epsilon_1}(1)$,
\item[$\cdot$]
$J_{k+\frac{r+1}{2},D_r}^{\text{new}}$ to $M_{2k}^{\text{new},\epsilon_2}(2)$, and
\item[$\cdot$]
$J_{k+\frac{r+1}{2},D_r}^{\text{old}}$ to $M_{2k}^{\epsilon_1}(1)$, 
\end{itemize}
and
commutes with the action of Hecke operators.
\end{theorem}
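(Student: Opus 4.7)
The plan is to factor the map $S_{d_0}$ as a composition of two natural maps: first a theta decomposition that sends $\phi$ to a scalar-valued half-integral weight form $f_\phi(\tau) := \sum_N A(N) q^N \in M_{k+\frac12}^{+,-r}(8)$, and then the classical $d_0$-th Shimura lift applied to $f_\phi$. The Hecke-equivariance and the behaviour on the cusp/new/old subspaces will follow by composing the corresponding properties of each step with the Shimura correspondence of Ueda--Yamana recalled as Theorem~\ref{thm:UY}.

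For the first step, I would use the theta decomposition
\begin{eqnarray*}
\phi(\tau, z) &=& \sum_{\mu \in D_r^\sharp/D_r} h_\mu(\tau)\, \theta_{D_r, \mu}(\tau, z),
\end{eqnarray*}
whose components $h_\mu$ form a vector-valued modular form of weight $k+\frac12$ transforming by the Weil representation of $D_r^\sharp/D_r$. Since $k+\frac{r+1}{2}$ is even, the invariance $\phi(\tau,-z) = \phi(\tau,z)$ yields $h_{-\mu} = h_\mu$, so the components pair up. Writing the $q^{n'-\beta(r')}$-coefficient of $h_{r'+D_r}$ as $c(n',r')$ and using that for odd $r$ the residues $-\beta(\mu) \bmod 1$ with $\mu \in D_r^\sharp/D_r$ are exactly $0$, $\frac{1}{8}$ and $-\frac{r}{8}$, one checks that the assembled series $f_\phi = \sum_N A(N) q^N$ lies in $M_{k+\frac12}^{+,-r}(8)$; the factor $2$ in the definition of $A(N)$ for $N \equiv -r \bmod 8$ reflects that the orbit $\{\mu,-\mu\}$ has two elements in that case while it is a singleton when $N \equiv 0, 4 \bmod 8$.

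For the second step, the formula defining $S_{d_0}(\phi)$ is precisely the $d_0$-th Shimura lift of $f_\phi$. Its well-definedness, the Euler product for its $L$-series, and its Hecke-equivariance follow from Kohnen's theory adapted to level $8$. The constant term $\frac{A(0)\, L(1-k,\left(\frac{(-1)^k d_0}{*}\right))}{2(1+\left(\frac{8}{r}\right)2^k)}$ encodes the Eisenstein contribution, the normalising denominator arising from writing the Eisenstein part of $M_{k+\frac12}^{+,-r}(8)$ as an explicit linear combination of the Cohen Eisenstein series $\mathscr{H}_k$ and its $U_k(4)$-image $\mathscr{H}_k^*$ (cf.\ \S\ref{sec:Fourier_coeff_Jacobi_Eisenstein} and Proposition~\ref{prop:J_M_even}). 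On the new-form subspace $S_{k+\frac12}^{new,+,-r}(8)$, Theorem~\ref{thm:UY} places the Shimura image inside $S_{2k}^{new,\epsilon_2}(2)$; on the complementary old-form plus Eisenstein subspace the image lies in $M_{2k}^{\epsilon_1}(1)$, with $\epsilon_1 = -\left(\tfrac{-4}{r}\right)$ forced by the parity of the twist character $\left(\frac{(-1)^k d_0}{*}\right)$ against the Fricke involution on level one.

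Combining the two steps with the decomposition of $J_{k+\frac{r+1}{2},D_r}$ from Theorem~\ref{thm:J_decom} and Lemma~\ref{lem:cusp_cusp_new} yields the claimed mapping of the subspaces, while Hecke-equivariance is inherited from each intermediate map. The main obstacle I expect is the sign/normalisation bookkeeping in the second step: identifying $\epsilon_1$ and $\epsilon_2$ as the Atkin--Lehner eigenvalues on the image and matching the Eisenstein parts of $M_{k+\frac12}^{+,-r}(8)$ with those of $M_{2k}^{\epsilon_1}(1) \oplus M_{2k}^{new,\epsilon_2}(2)$ requires a careful but essentially mechanical computation of the constant term via the $\mathscr{H}_k$, $\mathscr{H}_k^*$ expansion.
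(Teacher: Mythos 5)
Your overall architecture coincides with the paper's: first pass from $\phi$ to $f_\phi=\sum_N A(N)q^N\in M_{k+\frac12}^{+,-r}(8)$ (this is exactly the map $\mathcal{J}_{r,k}^{even}$ of Proposition~\ref{prop:J_M_even}), then interpret $S_{d_0}$ as the $d_0$-th Shimura lift of $f_\phi$. The first step is fine. The gap is in the second step, which you dispose of by appealing to ``Kohnen's theory adapted to level $8$'': no such off-the-shelf result applies here. Kohnen's $S_{d_0}$-theory lives on the plus space of level $4$, and Theorem~\ref{thm:UY} (Ueda--Yamana) only gives the correspondence for \emph{newforms} in $S_{k+\frac12}^{+}(8)$. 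For a general $\phi$ the assertion that $S_{d_0}(\phi)$ is modular of level dividing $2$ is equivalent to the coefficient identity $\sum_{d\mid n}\left(\frac{(-1)^k d_0}{d}\right)d^{k-1}A\!\left(\frac{n^2}{d^2}d_0\right)=A(d_0)\,a_f(n)$ for \emph{all} $n$, where $f$ is the eigenform attached to $\phi$. For odd $n$ this follows from the $T^J(p)$ eigenvalue relations by an Euler-factor computation, but the whole difficulty is the prime $2$, for which there is no Hecke operator on $J_{k+\frac{r+1}{2},D_r}$, and for which the hypothesis that $(-1)^kd_0$ is fundamental with $d_0\equiv 0\bmod 4$ (so that $\left(\frac{(-1)^k d_0}{2}\right)=0$) must be used; your proposal never engages with this.

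Concretely, the paper has to argue the $2$-part separately on each component. On $J^{cusp,new}$ one uses $f_\phi|U_k(4)=a_f(2)f_\phi$, which does come from Theorem~\ref{thm:UY}. On $J^{cusp,old}$, however, $f_\phi$ is \emph{not} a UY newform: one must start from the level-$4$ Kohnen eigenform $h=\sum b(n)q^n$ attached to $f\in S_{2k}^{\epsilon_1}(1)$, form the specific combination $h^*$ with $b^*(n)=b(4n)-b(n)\left\{\left(\frac{8}{r}\right)2^{k-1}+\lambda_2\right\}$, check $h^*\in S_{k+\frac12}^{+,-r}(8)$, derive the recursion $b^*(4n)+2^{2k-1}b^*(n/4)=\lambda_2 b^*(n)$, and then identify $f_\phi$ with $h^*$ up to a scalar via strong multiplicity one; only then does the divisor sum at powers of $2$ come out right. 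On the Eisenstein line one needs the explicit Fourier coefficients of $E_{k+\frac{r+1}{2},D_r}$ (Theorem~\ref{thm:Jacobi_Eisenstein}) to verify that the constant term you wrote equals $A(d_0)\,\zeta(1-2k)/2$, i.e.\ $A(d_0)=H(k,(-1)^kd_0)\big/\big((1+\left(\frac{8}{r}\right)2^k)\zeta(1-2k)\big)$. Calling all of this ``essentially mechanical sign/normalisation bookkeeping'' understates it: these case-by-case arguments at $p=2$ and the Eisenstein constant-term identification are the substance of the proof, and they are missing from your proposal.
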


Note that if $k+\frac{r+1}{2}$ is odd, then
$J_{k+\frac{r+1}{2},D_r} = J_{k+\frac{r+1}{2},D_r}^{\text{cusp, new}}$ (See Theorem~\ref{thm:J_decom} and Lemma~\ref{lem:dim_Eisenstein}).
\begin{theorem}\label{thm:map_J_odd}
Assume that $k + \frac{r+1}{2}$ is an odd integer.
For $\phi \in J_{k+\frac{r+1}{2},D_r}^{\text{cusp, new}}$ and $N = 8 n' - r$,
we set $A(N) := c(n', v_1)$, where $c(n',r')$ denotes the $(n',r')$-th Fourier coefficient of $\phi$ for $(n',r') \in \Z \times D_r^{\sharp}$
$($see $\S$\ref{sec:decom} for the notation $v_1$$)$.

We note that $\sum_{n' \in \Z} c(n',v_1) q^{(8n'-r)/8} \in \eta^{24-3r} M_{k-12+\frac{3r+1}{2}}(1)$.

For any fundamental discriminant $(-1)^{k-1} d_0$ such that $d_0 > 0$ and $d_0 \equiv -r \mod 8$,
we define
\begin{equation*}
  S_{d_0}(\phi) :=
  \sum_{n=1}^\infty
    \left( 
  \left( \left( \frac{8}{r}\right) 2^{k-1}\right)^{e_2}
  \left( \frac{-4}{n_1} \right)
  \sum_{ d|n_1} \left( \frac{(-1)^k d_0 }{d} \right) d^{k-1}
   A\!\left( \frac{n_1^2}{d^2} d_0 \right) 
  \right)
  q^n,
\end{equation*}
where $n = 2^{e_2} n_1$ 
with $n_1$ being an odd integer and $e_2 \in \Z_{\geq 0}$. 
Then $S_{d_0}$ is a linear map 
\begin{equation*}
 S_{d_0} \ : \ J_{k+\frac{r+1}{2},D_r}^{\text{cusp, new}} \to S_{2k}^{\text{new},\epsilon_2}(2).
\end{equation*}
Moreover, the map $S_{d_0}$ commutes with the action of Hecke operators.
\end{theorem}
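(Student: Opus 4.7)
The plan is to realize $S_{d_0}$ as the composition
\begin{eqnarray*}
J_{k+\frac{r+1}{2},D_r}^{cusp,new} \ \stackrel{\sim}{\longrightarrow}\  \eta^{24-3r}\, M_{k-12+\frac{3r+1}{2}}(1) \ \stackrel{\mathrm{Shim}_{d_0}}{\longrightarrow}\  S_{2k}^{new,\epsilon_2}(2),
\end{eqnarray*}
where the first arrow is the explicit isomorphism furnished by Theorem~\ref{thm:J_new} --- which, via the theta decomposition of Jacobi forms of index $D_r$, sends $\phi$ with Fourier coefficients $c(n',r')$ to the form $g_\phi(\tau) := \sum_{n' \in \Z} c(n',v_1)\, q^{(8n'-r)/8}$ --- and the second is the twisted $d_0$-th Shimura lift of~\cite{YY} (see Theorem~\ref{thm:YY}). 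The hypothesis $d_0 \equiv -r \!\!\mod 8$ ensures $n_1^2 d_0 / d^2 \equiv -r \!\!\mod 8$ for every odd $d \mid n_1$, so with $A(N) := c(n',v_1)$ for $N = 8n' - r$ the quantities $A(n_1^2 d_0/d^2)$ appearing in the defining formula of $S_{d_0}$ are genuine Fourier coefficients of $g_\phi$, and $S_{d_0}(\phi)$ is a well-defined $q$-series.

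What remains is to match the formula for $S_{d_0}(\phi)$ with $\mathrm{Shim}_{d_0}(g_\phi)$ coefficient by coefficient. For odd $n = n_1$, the classical Shimura--Kohnen formula (specialised to the fundamental discriminant $(-1)^{k-1} d_0$) produces the inner double sum
\begin{eqnarray*}
\left( \frac{-4}{n_1} \right) \sum_{d \mid n_1} \left( \frac{(-1)^k d_0}{d} \right) d^{k-1} A\!\left( \frac{n_1^2}{d^2} d_0 \right),
\end{eqnarray*}
in which the outer character $\left( \frac{-4}{n_1} \right)$ is the twist coming from the $\eta^{24-3r}$ multiplier system (equivalently, from the twisted Hecke operators of \S\ref{ss:Hecke_op} acting on the intermediate space). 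For $n = 2^{e_2} n_1$ with $e_2 \geq 1$, one exploits that on $S_{2k}^{new,\epsilon_2}(2)$ the Hecke operator $T(2)$ acts as the scalar $\left( \frac{8}{r} \right) 2^{k-1}$ --- the analogue at $2$ of the Atkin--Lehner relation $a_p = -w_p \cdot p^{k-1}$ for prime-level new forms of weight $2k$ --- so that the $n$-th coefficient equals the $n_1$-st times $\bigl(\left( \frac{8}{r} \right) 2^{k-1}\bigr)^{e_2}$. The product of these two factors is exactly the formula of the theorem.

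Target membership $S_{d_0}(\phi) \in S_{2k}^{new,\epsilon_2}(2)$ and Hecke equivariance then follow automatically, since each of the two constituent maps is Hecke-equivariant: Theorem~\ref{thm:J_new} intertwines the Jacobi Hecke action with the twisted Hecke operators on $\eta^{24-3r} M_{k-12+\frac{3r+1}{2}}(1)$, and the lift of~\cite{YY} intertwines those twisted operators with the standard $T(p)$ on $S_{2k}^{new,\epsilon_2}(2)$ for every odd prime $p$.

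The main technical obstacle is the $2$-adic bookkeeping. One must check that the Atkin--Lehner eigenvalue at $2$ of new forms in $S_{2k}^{new,\epsilon_2}(2)$ is indeed $\left( \frac{8}{r} \right) 2^{k-1}$, which amounts to chasing the sign identity $\epsilon_2 = -\left( \frac{-8}{r} \right) = -\left( \frac{-4}{r} \right)\left( \frac{8}{r} \right)$ through the theta decomposition and the eta-factor; and that the quadratic twist appearing in the lift is exactly $\left( \frac{-4}{\,\cdot\,} \right)$ rather than some other character at $8$. Once these $2$-adic conventions are pinned down, the odd-prime part of the statement is a routine specialization of the classical Shimura--Kohnen correspondence.
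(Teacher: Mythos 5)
Your proposal is correct and follows essentially the same route as the paper: the paper also works through the isomorphism of Theorem~\ref{thm:J_new} (i.e.\ Proposition~\ref{prop:J_M_odd}), derives the odd-$n$ identity $\left(\frac{-4}{n_1}\right)\sum_{d\mid n_1}\left(\frac{(-1)^k d_0}{d}\right)d^{k-1}A\!\left(\frac{n_1^2}{d^2}d_0\right)=A(d_0)a_f(n_1)$ for Hecke eigenforms from the twisted Hecke eigenvalue relation, and treats the powers of $2$ exactly by your Atkin--Lehner observation, computing $a_f(2)=\left(\frac{8}{r}\right)2^{k-1}$ from $f|_{2k}\left(\begin{smallmatrix}0&-1\\2&0\end{smallmatrix}\right)=-2^{1-k}a_f(2)f$ together with the definition of $S_{2k}^{new,\epsilon_2}(2)$. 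The only real difference is presentational: where you invoke the classical Shimura--Kohnen formula and the lift of~\cite{YY} as a black box and defer the $2$-adic and character bookkeeping, the paper proves those coefficient identities directly and then concludes $S_{d_0}(\phi)=A(d_0)f$ for eigenforms, extending by linearity.
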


Theorems~\ref{thm:map_J_even} and \ref{thm:map_J_odd} will be proved in $\S$\ref{sec:S_d_0}.

We note that
maps similar to those in Theorems~\ref{thm:map_J_even} and \ref{thm:map_J_odd} for the case $D_1$ 
have been obtained 
in the context of Jacobi forms of matrix index in~\cite{bringmann}.

\vspace{1cm}

The present paper is organized as follows: 
In Section~\ref{sec:Jacobi_lattice}, we review
the definitions of Jacobi forms, modular forms of half-integral weight, 
and Hecke operators. 
In Section~\ref{sec:Ikeda_lift}, we prove Theorem~\ref{thm:J_decom} by means of the Ikeda lift and Fourier--Jacobi expansions. 
Section~\ref{sec:decom} is devoted to describing some basic properties of Jacobi forms of index $D_r$. 
In Sections~\ref{sec:JF_odd_weight} and \ref{sec:JF_even_weight}, we establish the isomorphisms 
between elliptic modular forms and Jacobi forms of index $D_r$ of odd and even weights, respectively.
In Section~\ref{sec:proof_main_k_1}, we provide the proofs of Theorems~\ref{thm:main} and \ref{thm:k_1}. 
Section~\ref{sec:Fourier_coeff_Jacobi_Eisenstein} presents an explicit formula 
for the Fourier coefficient of Jacobi--Eisenstein series in $J_{k,D_r}$ and gives the proof of Theorem~\ref{thm:E_3_2_8}.
Finally, Theorems~\ref{thm:map_J_even} and \ref{thm:map_J_odd} are proved in Section~\ref{sec:S_d_0}.

\vspace{1cm}

\noindent
\textbf{Acknowledgements: }
the author would like to express his sincere thanks to Tomoyoshi Ibukiyama, Nils-Peter Skoruppa and Haigang Zhou for their valuable comments.
This manuscript was prepared with the assistance of Gemini to improve linguistic readability after its initial drafting. 
This work is partially supported by JSPS KAKENHI Grant Number JP19K03419.

\section{Jacobi forms of lattice index}\label{sec:Jacobi_lattice}

\subsection{Basic properties}\label{ss:basic}

We denote by $\HH$ the Poincar\'e upper half space.
Let $e(x) := e^{2\pi i x}$, and we write $q := e(\tau)$ for $\tau \in \HH$.
For each $z\in \C$, we fix the argument to satisfy $-\pi < \arg(z) \leq \pi$ and set
$z^{m/2} := (z^{1/2})^m$ for $m \in \Z$.

Let $L$ be a free $\Z$-module of finite rank $n$, equipped with a $\Z$-valued symmetric bilinear form $\beta$.
Throughout this paper, we assume that $\beta$ is positive-definite.
Such a pair $\underline{L} = (L,\beta)$ is called an \textit{integral lattice} of rank $n$.
By abuse of notation, we also define $\beta(x) := \frac12 \beta(x,x)$.
In what follows, we occasionally denote the lattice structure by the quadratic form 
$x \mapsto \beta(x)$ instead of the bilinear form  $(x,y) \mapsto \beta(x,y)$.
If $\beta(x) \in \Z$ for all $x \in L$, then $\underline{L}$ is called an \textit{even} integral lattice.

For example, if $M$ is a positive-definite half-integral symmetric matrix of size $n$,
then
\begin{equation*}
  (\Z^n, (x,y)  \mapsto x (2M) {^t y})
\end{equation*}
is an even integral lattice.

The dual lattice of $L$ is defined as
\begin{equation*}
  L^\sharp := \left\{ x \in L \otimes_\Z \Qq \ : \ \beta(x,y) \in \Z \mbox{ for any $y \in L$} \right\}.
\end{equation*}

We fix a $\Z$-basis $\{e_i\}_i$ of $L$ and identify $L \otimes_\Z \C$ with $\C^n$ 
via the map $(x_1,...,x_n) \mapsto \sum_{i} x_i e_i$.

Let $k$ be an integer.

\begin{df}\label{df:jacobi_lattice}
 A holomorphic function $\phi(\tau,z)$ on $\HH \times \C^n$ is called a Jacobi form of weight $k$ and index $\underline{L}$ 
 if it satisfies the following three conditions: 
 \begin{itemize}
   \item[(i)] \quad For all $A = \left(\begin{smallmatrix} a & b \\ c & d \end{smallmatrix} \right) \in \mbox{SL}(2,\Z)$, 
   we have
     \begin{equation*}
       (\phi|_{k,\underline{L}} A)(\tau,z) :=
        (c\tau + d)^{-k}
          e\!\left(\frac{-c \beta(z)}{c\tau + d}\right)
              \phi\! \left(\frac{a\tau+b}{c\tau+d}, \frac{1}{c\tau+d} z \right)
        \ =\ \phi(\tau,z) .
     \end{equation*}
   \item[(ii)] \quad
   For all $\lambda$, $\mu$ $\in L$, we have 
   \begin{equation*}
    (\phi|_{k,\underline{L}}(\lambda,\mu))(\tau,z) := 
     \phi(\tau, z + \lambda \tau + \mu) e(\tau\beta(\lambda) + \beta(\lambda,z))
     \ =\
     \phi(\tau,z) .
   \end{equation*}
   \item[(iii)] \quad The function $\phi(\tau,z)$ has a Fourier expansion of the form
   \begin{equation*}
     \phi(\tau,z) = \sum_{n' \in \Z} \sum_{r' \in L^\sharp} c(n',r') e(n' \tau + \beta(r',z)),
   \end{equation*}
   where $c(n',r') = 0$ unless $n' - \beta(r') \geq 0 $.
 \end{itemize}
 The complex numbers $c(n',r')$ are called the $(n',r')$-th Fourier coefficients of $\phi$.
 
 If $c(n',r') = 0$ whenever $n' - \beta(r') = 0 $, then $\phi$ is called a Jacobi cusp form.
\end{df}
 We denote by $J_{k,\underline{L}}$ $($resp. $J^{\text{cusp}}_{k,\underline{L}}$$)$ the space of all Jacobi forms 
 $($resp. Jacobi cusp forms$)$ of weight $k$ and index $\underline{L}$.

In Definition~\ref{df:jacobi_lattice}, if the lattice $\underline{L} = (L,\beta)$ is given by  
$
  (\Z^n, x \mapsto \beta(x) = x M {^t x})
$
with a positive-definite half-integral symmetric matrix $M$ of size $n$,
then $\phi$ is called a Jacobi form of weight $k$ and index $M$ (see \cite{Zi}).
We denote the space of all such Jacobi forms by $J_{k,M}$.
 
 Let $\underline{L} = (L,\beta)$ be an integral lattice with a $\Z$-basis $\{e_1,...,e_n\}$.
 The matrix $2M = (\beta(e_i,e_j))_{i,j}$ is called the {\it Gram matrix} of $\beta$ with respect to $\{e_1,...,e_n\}$.
  Under this basis, we have the identifications 
  $L \cong \Z^n$, $L^\sharp \cong (2M)^{-1}\Z^n$, and $L^\sharp/L \cong \Z^n / (2M \Z^n)$.
 In particular, if $\underline{L}$ is an even integral lattice, then $J_{k,\underline{L}} \cong J_{k,M}$ (see \cite[$\S$1]{BS}).

\subsection{Discriminant modules}
For an even integral lattice  $\underline{L} = (L,\beta)$, \textit{the discriminant module} $D_{\underline{L}}$  
is defined as the pair 
\begin{equation*}
  D_{\underline{L}} :=
  \left( L^\sharp / L, x + L \mapsto \beta(x) + \Z \right)
\end{equation*}
(see \cite[$\S$ 2]{BS}).
Note that $L^\sharp / L$ is a finite abelian group and the map $x + L \mapsto \beta(x) + \Z \in \Qq/\Z$
is the associated quadratic form.

Two discriminant modules $D_{\underline{L_i}} = (L_i^\sharp/ L_i, x + L_i \rightarrow \beta_i(x) + \Z)$ $(i=1,2)$ are said to be isomorphic,
if there exists a group isomorphism  $j : L_1^\sharp/L_1 \rightarrow  L_2^\sharp/L_2$ that preserves the quadratic forms, i.e., 
$\beta_2(j(x)) + \Z =  \beta_1(x) + \Z$ for all $x + L_1 \in L_1^\sharp/L_1$.
 
\subsection{Theta functions}

For $x \in L^\sharp$, we define the theta function associated with $\underline{L}$ and $x \in L^\sharp$ by
\begin{equation*}
   \theta_{\underline{L},x}(\tau,z) := \sum_{y \in x +  L} e(\tau \beta(y) + \beta(y,z))
   \quad ((\tau,z) \in \HH \times \C^n).
\end{equation*}

The following transformation formulas are well known.
\begin{lemma}\label{lem:theta_transformation}
 For $r_i \in L^\sharp$, the following identities hold:
 \begin{equation}
   \notag
   \theta_{\underline{L},r_i}(\tau + 1,z) = e(\beta(r_i)) \theta_{\underline{L},r_i}(\tau,z)
\end{equation}   
 and
\begin{align}
   \label{id:theta_inverse}
  &
   \hspace{-1cm}
   \theta_{\underline{L},r_i}(-\tau^{-1}, \tau^{-1}z) \\
   \notag
   &=
   \left( \frac{\tau}{i} \right)^{n/2} e(\tau^{-1} \beta(z))
   \frac{1}{\sqrt{|L^\sharp / L |}}  \sum_{r_j \in L^\sharp / L} e(-\beta(r_i, r_j)) \theta_{\underline{L},r_j}(\tau,z)
 \end{align}
  $($See \cite[p. 100]{ara}, \cite[Theorem 2.3.3]{ali}, \cite[Cor. 3.34]{boy}, \cite[p. 16]{BS}, etc.$)$.
\end{lemma}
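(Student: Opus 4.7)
The plan is to prove the two transformation formulas separately, both by direct manipulation of the defining sum.

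The first formula is a bookkeeping check. Substituting $\tau \mapsto \tau+1$ multiplies each summand $e(\tau \beta(y) + \beta(y,z))$ of $\theta_{\underline{L},r_i}$ by $e(\beta(y))$, where $y = r_i + \ell$ for some $\ell \in L$. Using the polarization identity $\beta(y) = \beta(r_i) + \beta(r_i,\ell) + \beta(\ell)$ together with the fact that $\underline{L}$ is an even integral lattice (so $\beta(\ell) \in \Z$) and $r_i \in L^\sharp$ (so $\beta(r_i,\ell) \in \Z$), one concludes $e(\beta(y)) = e(\beta(r_i))$ for every $\ell$, and this common factor pulls out of the sum.

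For the second formula, the plan is to apply Poisson summation on the lattice $L \subset L \otimes_\Z \R \cong \R^n$. Set
\begin{eqnarray*}
f(w) := e\!\left(-\tau^{-1}\beta(w) + \beta(w,\tau^{-1}z)\right),
\end{eqnarray*}
so that $\theta_{\underline{L},r_i}(-\tau^{-1},\tau^{-1}z) = \sum_{\ell \in L} f(r_i + \ell)$. Poisson summation converts this into
$\frac{1}{\mathrm{vol}(L\otimes_\Z\R / L)} \sum_{\xi \in L^\sharp} e(\beta(r_i,\xi))\, \hat{f}(\xi)$, where the key observation is that the dual of $L$ with respect to the bilinear form $\beta$ is precisely $L^\sharp$, and $\mathrm{vol}(L\otimes_\Z\R/L)^2 \cdot |L^\sharp/L| = 1$ after normalizing the dual measure. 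Organizing $\xi \in L^\sharp$ into cosets $r_j + L$ for $r_j \in L^\sharp/L$ then reassembles the theta series $\theta_{\underline{L},r_j}(\tau,z)$ on the right-hand side.

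The main obstacle is the explicit evaluation of the Fourier transform $\hat{f}(\xi)$. Completing the square in $w$ — the shift $w \mapsto w - \tau^{-1}z$ linearizes the cross term and produces the prefactor $e(\tau^{-1}\beta(z))$ — reduces $\hat{f}(\xi)$ to a multivariable Gaussian integral $\int_{\R^n} e(-\tau^{-1}\beta(w)) e(-\beta(w,\xi))\, dw$. Choosing a basis of $L$ in which $\beta$ has Gram matrix $2M$ with $M$ positive-definite, this diagonalizes into a product of one-dimensional Gaussians evaluating to $(\tau/i)^{n/2}(\det 2M)^{-1/2} \cdot e(\tau \beta(\xi))$; the determinant factor cancels with $\mathrm{vol}(L\otimes_\Z\R/L) \cdot \sqrt{|L^\sharp/L|}$. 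The delicate point is pinning down the correct branch of $(\tau/i)^{n/2}$ (consistent with the convention $-\pi < \arg(z) \le \pi$ fixed at the start of the section) rather than an eighth-root ambiguity; this is standardly handled by first establishing the identity for purely imaginary $\tau = it$ with $t > 0$, where everything is real and positive, and then analytically continuing to $\HH$. As an alternative one may diagonalize $\beta$ over $\Qq$, reduce to the classical rank-one Jacobi theta inversion of Eichler and Zagier, and iterate one coordinate at a time.
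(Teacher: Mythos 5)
Your proposal is correct: the translation formula is exactly the direct computation using that $\underline{L}$ is even and $r_i\in L^\sharp$, and the inversion formula follows from Poisson summation on $L$ with the Gaussian Fourier transform, the factor $(\det 2M)^{-1/2}=|L^\sharp/L|^{-1/2}$, and the branch of $(\tau/i)^{n/2}$ fixed on the imaginary axis and extended by analytic continuation. The paper itself offers no proof of this lemma, merely citing \cite{ara}, \cite{ali}, \cite{boy} and \cite{BS}, and your argument is precisely the standard one given in those sources.
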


The following Lemma follows from \cite[Thm. 2.2]{BS}.
\begin{lemma}\label{lem:theta_decom}
Let  $\phi \in J_{k,\underline{L}}$ be a Jacobi form. Then $\phi$ admits the following decomposition:
\begin{equation*}
   \phi(\tau,z) = \sum_{x \in L^\sharp \slash L} h_x(\tau) \theta_{\underline{L},x}(\tau,z)
  \mbox{ with } h_x(\tau)   =  \sum_{n' \in \Z} c(n',x) e^{2 \pi i (n' - \beta(x)) \tau }.   
\end{equation*}
Since Definition~\ref{df:jacobi_lattice} (ii) and (iii) imply $c(n',x) = c(n'+ \beta(x,y) + \beta(y), x + y)$ for any $y \in L$, 
the coefficient $c(n',x)$ depends only on $x$ modulo $L$
and $n' - \beta(x)$. 
In particular, the above decomposition of $\phi$ is well-defined.
\end{lemma}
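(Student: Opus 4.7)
The plan is to derive the claimed decomposition directly from the Fourier expansion of $\phi$ in Definition~\ref{df:jacobi_lattice}(iii), using the elliptic transformation law (ii) to constrain the Fourier coefficients.

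First, I would apply property (ii) with $\mu = 0$ and arbitrary $\lambda \in L$. Substituting the Fourier series into $\phi(\tau, z + \lambda\tau) e(\tau \beta(\lambda) + \beta(\lambda,z)) = \phi(\tau,z)$, shifting the summation index $r' \mapsto r' - \lambda$ in $L^\sharp$ on the left-hand side, and comparing the coefficient of $e(n'\tau + \beta(r',z))$ on both sides yields
\begin{eqnarray*}
c(n', r') &=& c\bigl(n' - \beta(r',\lambda) - \beta(\lambda),\ r' - \lambda\bigr) \qquad (\lambda \in L).
\end{eqnarray*}
Setting $y = -\lambda$ and renaming $r'-\lambda$ as $x$, and using $2\beta(\lambda) = \beta(\lambda,\lambda)$, this rearranges to the identity stated in the lemma: $c(n',x) = c(n' + \beta(x,y) + \beta(y),\ x+y)$ for all $y \in L$. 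In particular $c(n',x)$ depends only on the residue $x + L \in L^\sharp / L$ and on $n' - \beta(x)$ modulo $\Z$.

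Second, I would partition the sum over $r' \in L^\sharp$ in the Fourier expansion according to cosets $x + L$, writing
\begin{eqnarray*}
\phi(\tau,z) &=& \sum_{x \in L^\sharp / L} \sum_{\lambda \in L} \sum_{n' \in \Z} c(n', x+\lambda)\, e\bigl(n'\tau + \beta(x+\lambda,z)\bigr),
\end{eqnarray*}
and then re-index the innermost sum by $m := n' - \beta(x,\lambda) - \beta(\lambda)$, so that $c(n', x+\lambda) = c(m, x)$ by the coefficient relation above. The double sum over $\lambda$ and $m$ then factors, producing
\begin{eqnarray*}
\sum_{m \in \Z} c(m,x)\, e(m\tau) \sum_{\lambda \in L} e\bigl((\beta(x,\lambda) + \beta(\lambda))\tau + \beta(x+\lambda, z)\bigr).
\end{eqnarray*}
Using $\beta(x+\lambda) = \beta(x) + \beta(x,\lambda) + \beta(\lambda)$, the inner $\lambda$-sum equals $e(-\tau \beta(x))\, \theta_{\underline{L},x}(\tau,z)$; combining the exponential factor $e(-\tau \beta(x))$ with $e(m\tau)$ yields exactly $h_x(\tau) = \sum_{m \in \Z} c(m,x)\, e((m - \beta(x))\tau)$.

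The only obstacle is bookkeeping: tracking the polarization identity $\beta(x+\lambda) = \beta(x) + \beta(x,\lambda) + \beta(\lambda)$ carefully through the re-indexing, and justifying the rearrangement of summations. The latter is immediate from the holomorphicity of $\phi$ on $\HH \times \C^n$ together with the support condition $n' - \beta(r') \geq 0$ in (iii), which ensures absolute and uniform convergence on compact subsets. The well-definedness of the decomposition (independence of the choice of coset representative $x$) follows at once from the coefficient relation, since replacing $x$ by $x+y$ ($y \in L$) merely relabels the summation in both $h_x$ and $\theta_{\underline{L},x}$ in a compatible way.
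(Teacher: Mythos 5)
The paper states this lemma without proof (it is the standard theta decomposition of a Jacobi form), so the argument you give is precisely the standard one the paper leaves implicit, and in structure it is correct and complete: derive the coefficient relation from axiom (ii) with $\mu=0$, group the Fourier expansion over cosets of $L$ in $L^\sharp$, re-index, and recognize $e(-\beta(x)\tau)\,\theta_{\underline{L},x}$ in the $\lambda$-sum, with absolute convergence justifying the rearrangement. Two small slips should be fixed. First, your displayed coefficient relation has the wrong sign on $\beta(\lambda)$: comparing coefficients directly gives $c(n',r')=c\bigl(n'+\beta(r',\lambda)+\beta(\lambda),\,r'+\lambda\bigr)$, which is already the lemma's identity with $y=\lambda$; after the shift $r'\mapsto r'-\lambda$ it reads $c(n',r')=c\bigl(n'-\beta(r',\lambda)+\beta(\lambda),\,r'-\lambda\bigr)$, not $-\beta(\lambda)$. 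As written, your relation rearranges to $c(n',x-y)=c\bigl(n'+\beta(x,y)-3\beta(y),\,x\bigr)$, which is not the stated identity, so the "rearranges to" sentence only works once the sign is corrected. Second, the parenthetical claim that $c(n',x)$ depends on $n'-\beta(x)$ "modulo $\Z$" is too strong and not what you use: the relation preserves the exact value $n'-\beta(x)$, since $\beta(x+y)-\beta(x)=\beta(x,y)+\beta(y)$, and it is this exact invariance (together with $x$ mod $L$) that makes $h_x$ well defined and that your re-indexing by $m=n'-\beta(x,\lambda)-\beta(\lambda)$ relies on. With these corrections the proof is fine.
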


\begin{theorem}[{\cite[Thm. 2.5]{BS}}]\label{thm:bs_isom}
 Let $\underline{L}_1$ and $\underline{L}_2$ be two positive-definite even lattices of rank $n_1$ and $n_2$, respectively.
 Assume that $j : D_{\underline{L_1}} 
  \xrightarrow{\cong} 
 D_{\underline{L_2}}$ is an isomorphism of the discriminant modules. 
Then the map
\begin{equation*}
  I_j \ : \ J_{k+\lfloor \frac{n_1}{2}\rfloor, \underline{L_1}} \rightarrow J_{k+\lfloor\frac{n_2}{2}\rfloor, \underline{L_2}}
\end{equation*}
defined by
\begin{equation*}
     \sum_{x \in L_1^\sharp \slash L_1} h_x(\tau) \theta_{\underline{L_1},x}(\tau,z)
     \mapsto
     \sum_{x \in L_1^\sharp \slash L_1} h_x(\tau) \theta_{\underline{L_2},j(x)}(\tau,z')
\end{equation*}
is an isomorphism of $\C$-vector spaces.
\end{theorem}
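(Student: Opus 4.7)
The plan is to use the theta decomposition of Lemma~\ref{lem:theta_decom} to translate the Jacobi transformation laws into conditions on the vector $(h_x)_{x \in L_1^\sharp/L_1}$, observe that all of these conditions depend only on the discriminant module, and then transport them via $j$. First I would note that condition~(ii) of Definition~\ref{df:jacobi_lattice} is automatic from Lemma~\ref{lem:theta_decom}, since each $\theta_{\underline{L_1},x}$ already satisfies the elliptic transformation for the lattice $L_1$ and the $h_x$ are independent of $z$.

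Next I would analyze condition~(i). Applying $T = \smat{1}{1}{0}{1}$ together with Lemma~\ref{lem:theta_transformation}, and using the $\C$-linear independence of $\{\theta_{\underline{L_1},x}\}_{x\in L_1^\sharp/L_1}$ as holomorphic functions on $\HH\times\C^{n_1}$, I obtain $h_x(\tau+1)=e(-\beta_1(x))h_x(\tau)$. Applying $S = \smat{0}{-1}{1}{0}$, substituting~(\ref{id:theta_inverse}), and again comparing coefficients yields a relation of the shape
\begin{equation*}
  h_y(-1/\tau) \;=\; i^{-n_1/2}\,\tau^{\,k + \lfloor n_1/2 \rfloor - n_1/2}\,\frac{1}{\sqrt{|L_1^\sharp/L_1|}}\sum_{x \in L_1^\sharp/L_1} e(\beta_1(x,y))\,h_x(\tau).
\end{equation*}
The matrix $(e(\beta_1(x,y)))_{x,y}$ and the cardinality $|L_1^\sharp/L_1|$ are invariants of the discriminant module and thus transport under $j$. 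Similarly, the Fourier-expansion condition~(iii) is governed by $n'-\beta_1(x)$, which is $j$-invariant modulo $\Z$, so non-negativity of $n'-\beta_1(x)$ is equivalent to non-negativity of $n'-\beta_2(j(x))$.

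The main technical obstacle is that the prefactor $i^{-n_i/2}$ and the exponent $\lfloor n_i/2\rfloor - n_i/2$ appear to depend on $n_i$ itself rather than only on the discriminant module. This is handled by Milgram's formula
\begin{equation*}
  \sum_{x \in L^\sharp/L} e(\beta(x)) \;=\; \sqrt{|L^\sharp/L|}\;e(\operatorname{sig}(L)/8),
\end{equation*}
whose left-hand side is manifestly a discriminant-module invariant. Since $\underline{L_1}$ and $\underline{L_2}$ are positive definite we have $\operatorname{sig}(L_i)=n_i$, so the isomorphism $D_{\underline{L_1}}\cong D_{\underline{L_2}}$ forces $n_1\equiv n_2\pmod{8}$. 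This in particular makes both $i^{-n_i/2}$ and $\lfloor n_i/2\rfloor - n_i/2$ agree across $i=1,2$, so the $S$-transformation law for $(h_{j^{-1}(y)})_{y \in L_2^\sharp/L_2}$ is exactly the one coming from $J_{k+\lfloor n_2/2\rfloor, \underline{L_2}}$.

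Once these compatibilities are assembled, the function $I_j(\phi) = \sum_{x} h_x(\tau)\,\theta_{\underline{L_2}, j(x)}(\tau,z')$ satisfies the three Jacobi form conditions with weight $k+\lfloor n_2/2\rfloor$ and index $\underline{L_2}$, so $I_j$ is a well-defined $\C$-linear map. Linearity is immediate from the formula, and applying the same construction with $j^{-1}$ yields a two-sided inverse, so $I_j$ is an isomorphism of $\C$-vector spaces.
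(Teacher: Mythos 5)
The paper itself gives no proof of this statement --- it is quoted from \cite[Thm.~2.5]{BS} --- so your argument can only be measured against the standard proof underlying that citation, and it is essentially that proof: decompose $\phi$ via Lemma~\ref{lem:theta_decom}, translate conditions (i)--(iii) into a vector-valued transformation law for $(h_x)_x$ under $S$ and $T$ (using Lemma~\ref{lem:theta_transformation} and linear independence of the $\theta_{\underline{L_1},x}$), observe that the resulting multiplier system is built from $|L_1^\sharp/L_1|$, the pairing $e(\beta_1(x,y))$ and the values $\beta_1(x)\bmod\Z$, all of which are discriminant-module data, and then use the Gauss--Milgram formula together with positive-definiteness to force $n_1\equiv n_2\pmod 8$, which makes the residual rank-dependent data (the eighth root of unity and the half-integral weight shift $\lfloor n_i/2\rfloor-n_i/2$) agree. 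That is exactly the right skeleton, and the identification of Milgram's formula as the step that removes the apparent dependence on $n_i$ is the key point. Two cosmetic remarks: solving the $S$-relation for $h_y(-1/\tau)$ by inverting the finite Fourier transform produces the prefactor $i^{+n_1/2}$ (equivalently $e(n_1/8)$), not $i^{-n_1/2}$ as you wrote --- harmless, since only its dependence on $n_1\bmod 8$ is used --- and you implicitly use that the slash operation $|_{k,\underline{L}}$ is a genuine right action of ${\rm SL}(2,\Z)$, so that verifying condition (i) on the generators $S,T$ suffices; it would be worth saying this explicitly, but it is standard and does not constitute a gap.
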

As shown in~\cite[Thm.~4.2.4]{ali}, 
this isomorphism is also compatible with the action of Hecke operators (see Theorem~\ref{thm:ali_hecke} below).

\subsection{Modular forms of half-integral weight
}\label{ss:half_int_theta}
We denote by $\mathfrak{G}$ the group consisting of all pairs $(A, \omega(\tau))$,
where $A = \left(\begin{smallmatrix} a & b \\ c & d \end{smallmatrix} \right)$ is an element
of the connected component $\mathrm{GL}^+(2,\R)$ of $\mathrm{GL}(2,\R)$,
and $\omega$ is a holomorphic function on the upper half-plane $\HH$ satisfying 
\begin{equation*}
  |\omega(\tau)|^2 = (\det A)^{-\frac12} |c\tau + d|.
\end{equation*}
The group operation on $\mathfrak{G}$ is defined by
$(A_1,\omega_1(\tau)) \cdot (A_2, \omega_2(\tau) ) := (A_1 A_2, \omega_1(A_2 \tau) \omega_2(\tau))$.

We define the usual theta function by 
\begin{equation*}
  \theta(\tau) := \sum_{m \in \Z} e(m^2 \tau).
\end{equation*}
There exists an injective homomorphism $\Gamma_0(4) \rightarrow \mathfrak{G}$ given by
$  A  \mapsto (A, j(A, \tau)) $, where
\begin{equation*}
  j(A,\tau) := \frac{\theta(A \tau)}{\theta(\tau)} 
  \quad \text{for } A = \left(\begin{smallmatrix} a & b \\ c & d \end{smallmatrix} \right) \in \Gamma_0(4).
\end{equation*}
It is well known that this factor is explicitly given by 
\begin{equation*}
  j(A,\tau) = \biggl(\frac{c}{d}\biggr) \left(\frac{-4}{d}\right)^{-1/2} (c\tau + d)^{1/2},
\end{equation*}
where $\left(\frac{c}{d}\right)$ denotes the Kronecker symbol
(cf. \cite[p.~194]{Miyake}).

Let $N$ be a positive integer such that $4|N$.
For $k \in  \Z$, 
we denote by $M_{k+\frac12}(N)$ the vector space of holomorphic functions $f$ on $\HH$,
satisfying the following conditions:
\begin{enumerate}
\item[$\cdot$]
  For any $A \in \Gamma_0(N)$, 
  \begin{equation*}
   (f|_{k+\frac12}A)(\tau) := j(A,\tau)^{-(2k+1)} f(A\tau)  =  f(\tau)
\end{equation*}
 \item[$\cdot$]
   $f^{2}$ is an elliptic modular form of weight $2k+1$
  and level $N$ with character $\left( \frac{-4}{d} \right)$.
\end{enumerate}
An element $f \in M_{k+\frac12}(N)$ is called a modular form of weight $k+\frac12$ and level $N$.
Such a function $f$ is a cusp form if $f^2$ is a cusp form.
We denote the subspace of cusp forms by $S_{k+\frac12}(N)$.

We define the Kohnen plus-space of level $8$ as follows:
\begin{equation*}
  M_{k+\frac12}^{+}(8)
  :=
  \left\{ 
    g = \sum_n c_g(n) q^n \in M_{k+\frac12}(8)
    \ : \
    c_g(n) = 0 \mbox{ unless }  n \equiv 0, (-1)^k  \!\! \mod 4
  \right\}.
\end{equation*}

For $k \in \Z$ and $r \in \{1,3,5,7\}$ satisfying $(-1)^k \equiv -r \mod 4$, we define
the subspace $M_{k+\frac12}^{+, -r}(8)$ by
\begin{equation*}
  M_{k+\frac12}^{+, -r}(8)
  :=
  \left\{ 
    g = \sum_n c_g(n) q^n \in M_{k+\frac12}^{+}(8)
    \ : \
    c_g(n) = 0 \mbox{ unless } n \equiv 0,4, -r  \!\! \mod 8
  \right\}.
\end{equation*}
For example, it holds that $\theta \in M_{1/2}^{+,-7}(8)$.

Note that $  M_{k+\frac12}^{+, -r}(8) \subset   M_{k+\frac12}^{+}(8)$ by definition.
It follows from \cite[Prop. 4]{UY} that
the plus-space admits the following direct sum decomposition: 
\begin{equation*}
  M_{k+\frac12}^{+}(8) =
   \begin{cases}
   M_{k+\frac12}^{+,-1}(8) \oplus M_{k+\frac12}^{+,-5}(8), & \text{ if $k$ is odd}, \\
   M_{k+\frac12}^{+,-3}(8) \oplus M_{k+\frac12}^{+,-7}(8), & \text{ if $k$ is even}.    
   \end{cases}
\end{equation*}

Similarly, we define the spaces of cusp forms by $S_{k+\frac12}^{+}(8) := M_{k+\frac12}^{+}(8) \cap S_{k+\frac12}(8)$
and $S_{k+\frac12}^{+,-r}(8) := M_{k+\frac12}^{+,-r}(8) \cap S_{k+\frac12}(8)$.
Then we have
\begin{equation}\label{id:S_+_r}
  S_{k+\frac12}^{+}(8) = 
   \begin{cases}
   S_{k+\frac12}^{+,-1}(8) \oplus S_{k+\frac12}^{+,-5}(8), & \text{ if $k$ is odd}, \\
   S_{k+\frac12}^{+,-3}(8) \oplus S_{k+\frac12}^{+,-7}(8), & \text{ if $k$ is even}.    
   \end{cases}
\end{equation}

\subsection{Modular forms of half-integral weight of $\eta$-type}\label{ss:eta_type}
We set
\begin{equation*}
 \mathrm{Mp}(2,\Z) := \left\{ (A, \omega(\tau))  \ : \ A = \begin{pmatrix} a & b \\ c & d \end{pmatrix} \in \mbox{SL}(2,\Z),
 \omega(\tau)^2 =  c\tau + d \right\},
\end{equation*}
which we regard as a subgroup of $\mathfrak{G}$.
Following \cite{BS}, we note that $\mbox{Mp}(2,\Z)$ is a two-fold central extension of $\mbox{SL}(2,\Z)$:
\begin{equation*}
  1 \rightarrow \{ \pm 1\} \rightarrow \mathrm{Mp}(2,\Z) \rightarrow \mathrm{SL}(2,\Z) \rightarrow 1.
\end{equation*}
The group $\mbox{Mp}(2,\Z)$ is generated by the two elements
$\tilde{T} := \left(\left(\begin{smallmatrix} 1 & 1 \\ 0 & 1 \end{smallmatrix} \right),1 \right)$
and $\tilde{S} := \left(\left(\begin{smallmatrix} 0 & -1 \\ 1 & 0 \end{smallmatrix} \right), \sqrt{\tau} \right)$, 
which satisfy
 $\tilde{S}^2 = (\tilde{S} \tilde{T})^3 = \left(\left(\begin{smallmatrix} -1 & 0 \\ 0 & -1 \end{smallmatrix} \right), \sqrt{-1} \right)$.
Moreover, $\tilde{S}$ has order $8$ in $\mathrm{Mp}(2,\Z)$.

It is known that the group of linear characters of $\mbox{Mp}(2,\Z)$ is the cyclic group of order $24$ generated by $\varepsilon$,
where
\begin{equation*}
  \varepsilon((A,\omega))
  :=
  \frac{\eta(A\tau)}{\eta(\tau) \omega(\tau)} \quad 
  \text{for }
  (A,\omega) \in \mathrm{Mp}(2,\Z).
\end{equation*}
This definition of $\varepsilon$ is independ of the choice of $\tau \in \HH$.
In particular, 
we have $\varepsilon(\tilde{T}) = e(1/24)$ and $\varepsilon(\tilde{S}) = \varepsilon(\tilde{T})^{-3} = e(-1/8)$.
For further details on the group of linear characters of $\mathrm{Mp}(2,\Z)$, we refer the reader to \cite[Prop. 1.1]{BS}.

For $k \in \frac12 \Z$ and $s \in \Z$ ($0 \leq s \leq 23$),
we denote by $M_{k+\frac{s}{2}}(1,\varepsilon^s)$ the vector space of holomorphic functions $f$ on $\HH$ satisfying
the following conditions:
\begin{enumerate}
\item[$\cdot$]
$f(A\tau) = \varepsilon((A,\omega))^s \omega(\tau)^{2k+s} f(\tau)$
for any $(A,\omega) \in \mbox{Mp}(2,\Z)$.
\item[$\cdot$]
$f^{24}$ is an elliptic modular form of weight $24k+12s$ and level $1$.
\end{enumerate}
Since $\xi = \left( \left(\begin{smallmatrix} 1 & 0 \\ 0 & 1 \end{smallmatrix}\right),-1 \right)\in \mathrm{Mp}(2,\Z)$
and $\varepsilon(\xi) = -1$,
it follows that 
if a non-zero function $f$ exists, then $2k+2s  \equiv 0 \mod 2$.
Consequently, $\dim M_{k+\frac{s}{2}}(1,\varepsilon^s) = 0$ unless $k \in \Z$.

\begin{lemma}\label{lem:eta_M}
For $0 \leq s \leq 23$ and $k \in \Z$, we have
\begin{equation*}
   M_{k+\frac{s}{2}}(1,\varepsilon^s) = \eta^s M_k(1).
\end{equation*}
\end{lemma}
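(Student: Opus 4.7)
The plan is to show that multiplication by $\eta^s$ defines a bijection $M_k(1) \to M_{k+\frac{s}{2}}(1,\varepsilon^s)$. Both containments reduce to tracking the transformation law of $\eta$ together with a short $q$-expansion argument at the cusp~$\infty$.

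First, for the containment $\eta^s M_k(1) \subseteq M_{k+\frac{s}{2}}(1,\varepsilon^s)$: given $f \in M_k(1)$ and $(A,\omega) \in \mbox{Mp}(2,\Z)$, I would use the defining relation $\eta(A\tau) = \varepsilon((A,\omega)) \omega(\tau) \eta(\tau)$, raise to the $s$-th power, and multiply by $f(A\tau) = \omega(\tau)^{2k} f(\tau)$ (valid since $\omega(\tau)^2 = c\tau+d$ and $k \in \Z$) to obtain
\begin{eqnarray*}
 (\eta^s f)(A\tau) &=& \varepsilon((A,\omega))^s\, \omega(\tau)^{2k+s}\, (\eta^s f)(\tau).
\end{eqnarray*}
Holomorphy of $\eta^s f$ at the cusp, together with the fact that $(\eta^s f)^{24} = \eta^{24s} f^{24}$ is an elliptic modular form of weight $24k+12s$ of level $1$, is immediate from the product formula for $\eta$ and from $f \in M_k(1)$.

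Second, for the reverse containment, I would take $g \in M_{k+\frac{s}{2}}(1,\varepsilon^s)$ and set $f := g/\eta^s$. Since $\eta$ has no zero on $\HH$, $f$ is holomorphic on $\HH$. Dividing the transformation formula for $g$ by the $s$-th power of that of $\eta$ cancels both the character $\varepsilon^s$ and the factor $\omega(\tau)^s$, leaving
\begin{eqnarray*}
 f(A\tau) &=& \omega(\tau)^{2k} f(\tau) \ =\ (c\tau+d)^k f(\tau)
\end{eqnarray*}
for every $(A,\omega) \in \mbox{Mp}(2,\Z)$, and hence for every $A \in \mbox{SL}(2,\Z)$.

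Third, I would verify holomorphy of $f$ at $\infty$ by a $q$-expansion analysis. The action of $\tilde{T}$ gives $g(\tau+1) = \varepsilon(\tilde{T})^s g(\tau) = e(s/24) g(\tau)$, so $g$ admits an expansion $g(\tau) = q^{s/24} \sum_{n \in \Z} c(n) q^n$. The hypothesis that $g^{24}$ lies in $M_{24k+12s}(1)$, combined with the range $0 \leq s \leq 23$, forces the leading exponent $n_0$ of the $q$-series to satisfy $24 n_0 + s \geq 0$, hence $n_0 \geq 0$, pinning the expansion down to $g = q^{s/24} \sum_{n \geq 0} c(n) q^n$. Since $\eta^{-s} = q^{-s/24} \prod_{n \geq 1}(1-q^n)^{-s}$, the quotient $f = g/\eta^s$ is a power series in $q$ with non-negative integer exponents. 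Therefore $f \in M_k(1)$ and $g \in \eta^s M_k(1)$.

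The main subtlety I anticipate is the last step: one must confirm that combining the $\tilde{T}$-eigenvalue with the holomorphy of $g^{24}$ really forces the leading exponent of $g$ to be exactly $s/24$ rather than $s/24 - m$ for some positive integer $m$; the constraint $0 \leq s \leq 23$ is precisely what rules this out. In the degenerate cases where $k$ is odd, $k<0$, or $k=2$, we have $M_k(1) = \{0\}$, and the argument above shows $M_{k+\frac{s}{2}}(1,\varepsilon^s) = \{0\}$ as well, so the identity remains valid trivially.
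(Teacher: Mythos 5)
Your proof is correct and rests on the same ingredients as the paper's: the transformation law of $\eta$ under $\mbox{Mp}(2,\Z)$ together with a $q$-expansion bound at infinity, where the hypothesis $0 \leq s \leq 23$ forces the leading exponent to be $\geq s/24$. The only cosmetic difference is that the paper multiplies $g \in M_{k+\frac{s}{2}}(1,\varepsilon^s)$ by $\eta^{24-s}$ to kill the character and then invokes divisibility by $\Delta = \eta^{24}$, whereas you divide by $\eta^s$ directly and check holomorphy of the quotient at $\infty$; both amount to the same estimate.
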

\begin{proof}
 Let $\Delta := \eta^{24}$ be the Ramanujan $\Delta$-function.
 From the transformation formula and the $q$-expansion of $f \in M_{k+s/2}(1,\varepsilon^s)$, 
 it follows that $\eta^{24-s} f$ is a modular form of weight $k+12$ and level $1$ which vanishes at the cusp.
 Thus, $\eta^{24-s} M_{k+s/2}(1,\varepsilon^s) \subset S_{k+12}(1) =  \Delta M_k(1)$,
 which implies $M_{k+s/2}(1,\varepsilon^s) \subset \eta^{s} M_k(1)$.
 The opposite inclusion $\eta^{s} M_k(1) \subset M_{k+s/2}(1,\varepsilon^s)$ is obvious.
\end{proof}

Following~\cite[p.~6]{YY}, we refer to $f \in M_{k+\frac{s}{2}}(1,\varepsilon^s)$ 
as a modular form of weight $k+\frac{s}{2}$ of $\eta$-type.
In this paper, we focus exclusively on the cases $s \in \{ 3,9,15, 21\}$.

\subsection{Hecke operators}\label{ss:Hecke_op}

Assume that $k \in \Z$, and let $f \in M_{k+\frac12}(4)$. 
Let $f(\tau) = \sum_{n=0}^\infty c(n) q^n$ be the Fourier expansion of $f$.
For any odd prime $p$, the action of the Hecke operator $T(p^2)$ is defined by
\begin{equation*}
  (f|_{k+\frac12}T(p^2))(\tau) := 
  \sum_{n=1}^\infty 
  \left\{ c(p^2 n) + \left( \frac{(-1)^k n }{p}\right) p^{k-1} c(n) + p^{2k-1} c(n/p^2) \right\}
   q^n,
\end{equation*}
where we set $c(n/p^2) = 0$ if $n/p^2 $ is not an integer.

Let $f \in \eta^r M_{k+\frac12-\frac{r}{2}}(1)$ for $r \in \{3, 9, 15, 21\}$, 
which is a modular form of weight $k+1/2$ of $\eta$-type. 
The Fourier expansion of $f$ is given by $f(\tau) = \sum_{n=1}^\infty c(n) q^{n/8}$. 
Note that $c(n) = 0$ unless $n \equiv r/3 \mod 8$.
For any odd prime $p$, we introduce the \textit{twisted} Hecke operator $\tilde{T}(p^2)$ defined by
\begin{equation*}
  (f|_{k+\frac12}\tilde{T}(p^2))(\tau) := 
    \left( \frac{-4}{p} \right)
  \sum_{n=1}^\infty 
  \left\{ c(p^2 n) + \left( \frac{(-1)^k n }{p}\right) p^{k-1} c(n) + p^{2k-1} c(n/p^2) \right\}
   q^{n/8},
\end{equation*}
where we set $c(n/p^2) = 0$ if $n/p^2 $ is not an integer.
It should be noted that our definition of $\tilde{T}(p^2)$ differs from the definition of $T_{p^2}$ in \cite[Prop. 11]{YY} 
by the factor $\left( \frac{-4}{p} \right)$.
To establish the isomorphism between $\eta^r M_{k+\frac{1-r}{2}}(1)$ and a certain space of Jacobi forms
of odd weight as Hecke modules
(see Prop.~\ref{prop:J_M_odd}), 
we employ the twisted operator $\tilde{T}(p^2)$ for $\eta^r M_{k+\frac{1-r}{2}}(1)$
throughout this paper.

We introduce the Hecke operators $T^J(p)$ acting on the space of Jacobi forms of index $\underline{L}$.
The following lemma is a direct consequence of \cite[$\S$4]{Murase}.
\begin{lemma}\label{lem:hecke_fourier}
Let $\phi \in J_{k,\underline{L}}$ be a Jacobi form with the Fourier expansion
 \begin{equation*}
   \phi(\tau,z) = \sum_{n' \in \Z} \sum_{r' \in L^\sharp} c(n',r') e(n'\tau + \beta(r',z)) .
 \end{equation*}
 Let $m$ be the rank of $L$.
 For any prime $p$ such that $p {\not |} \,  (2 |L^\sharp/L|)$,
 we define the action of the Hecke operator $T^J(p)$ by 
\begin{equation*}
  (\phi |_{k,\underline{L}} T^J(p))(\tau,z)
  := \sum_{n' \in \Z} \sum_{r' \in L^\sharp} c^{*}(n',r') e(n'\tau + \beta(r',z)),
\end{equation*}
where the Fourier coefficients $c^{*}(n',r')$ are given by 
\begin{align*}
  c^{*}(n',r') 
 :=
  &\quad c(p^{2}n', p r') \\
  &
  + p^{k-m-2}
  c(n',r')
  \begin{cases}
    \left( \left( \frac{-1}{p} \right) p \right)^{\frac{m}{2}} \left( \frac{[L^\sharp/L]}{p} \right) 
    \delta(n',r'),
    &\text{if $m \equiv 0 \!\! \mod 2$},\\
    \left( \left( \frac{-1}{p} \right) p \right)^{\frac{m+1}{2}} \left( \frac{2 [L^\sharp/L]}{p} \right)
    \left( \frac{n'- \beta(r')}{p} \right),
    &\text{if $m \equiv 1 \!\! \mod 2$},
  \end{cases} \\
  &
  + p^{2k-m-2}
  \sum_{\lambda \in L/ p L }c\!\left(\frac{1}{p^{2}}(n' - \beta(r',\lambda) + \beta(\lambda)), \frac{1}{p}(r' -  \lambda)\right).
\end{align*}
Here, $\lambda$ runs through a set of representatives of $ L/ p L$. 
Note that there exists only one $\lambda \in L/ p L$ such that $\frac{1}{p}(r'-\lambda) \in L^\sharp$.
We set $c(n',r') = 0$ if $(n',r')\, {\not \in}\, \Z \times L^\sharp$. 
Thus, the above summation $\displaystyle{\sum_{\lambda \in L/ p L }}$ consists of at most one term.
Furthermore, we define 
 \begin{equation*}
  \delta(n',r') :=
  \begin{cases}
      p-1, & \text{if $n' - \beta(r')  \equiv 0 \mod p $},\\
        -1, &\text{otherwise}.
  \end{cases}
  \end{equation*}
  Under this definition, we have $\phi|T^J(p) \in J_{k,\underline{L}}$.
  Moreover, if $\phi$ is a Jacobi cusp form, then $\phi|T^J(p)$ is also a Jacobi cusp form.
\end{lemma}
In this paper, we call $\phi$ a Hecke eigenform if it is an eigenfunction of $T^J(p)$
for all primes $p$ such that $p \, {\not | }\, (2|L^\sharp/L|)$.

Note that when $m$ is odd, then the definition of $T^J(p)$ coincides with 
the operator $T(p)$ defined in~\cite[Thm. 2.6.1]{ali}.

\begin{theorem}[{\cite[Thm.~4.2.4]{ali}}]\label{thm:ali_hecke}
 Assume that $n_1 \equiv n_2 \equiv 1 \mod 2$.
 Then the isomorphism 
 \begin{equation*}
  I_j \ : \ J_{k+\lfloor \frac{n_1}{2}\rfloor, \underline{L_1}} \xrightarrow{\cong} J_{k+\lfloor\frac{n_2}{2}\rfloor, \underline{L_2}}
  \end{equation*}
  defined in Theorem~\ref{thm:bs_isom} is compatible with the action of Hecke operators $T^J(p)$:
  \begin{equation*}
    I_j(\phi|_{k+\lfloor \frac{n_1}{2}\rfloor}T^J(p)) =
    I_j(\phi)|_{k+\lfloor \frac{n_2}{2}\rfloor} T^J(p)
  \end{equation*}
  for any $\phi \in J_{k+\lfloor \frac{n_1}{2}\rfloor, \underline{L_1}}$
  and any prime $p$ such that $p\, {\not |}\, (2|L_1^\sharp/L_1|)$.
\end{theorem}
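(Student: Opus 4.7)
The plan is to use the theta decomposition of Lemma~\ref{lem:theta_decom}: any $\phi \in J_{k+\lfloor n_i/2\rfloor, \underline{L_i}}$ is uniquely $\phi = \sum_{x \in L_i^\sharp/L_i} h_x \, \theta_{\underline{L_i}, x}$, and $I_j$ preserves the coefficient functions $h_x$ while merely transporting the theta index by $j$. The theorem therefore reduces to showing that when $T^J(p)$ is transcribed into an operator on the vector $(h_x)_{x \in L^\sharp/L}$, the result depends only on the discriminant module $D_{\underline{L}}$ (and on $k$), not on the rank $n_i$ itself.

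Concretely, I would rewrite the formula of Lemma~\ref{lem:hecke_fourier} using the fact that the Fourier coefficient $c(n', r')$ is determined by the class $x := r' + L \in L^\sharp/L$ and the invariant $D := n' - \beta(r')$, since $c(n', r')$ equals the $(D + \beta(x))$-th Fourier coefficient of $h_x$. Under the hypothesis $p \nmid 2[L^\sharp/L]$, multiplication by $p$ is a bijection of $L^\sharp/L$ and $pL^\sharp \cap L = pL$, so the inner sum over $\lambda \in L/pL$ in the third Hecke summand collapses to a single term indexed by $x$ alone; a direct calculation then gives the shifted class $p^{-1}x$ and the shifted invariant $D/p^2$. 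Substituting the weight $K = k + \lfloor n/2\rfloor$ of $\phi$ into the exponents $p^{K - n - 2}$ and $p^{2K - n - 2}$ of Lemma~\ref{lem:hecke_fourier}, and combining with the extra $p^{(n+1)/2}$ from the Kronecker factor in the middle term, yields powers $p^{k-2}$ and $p^{2k-3}$ which are manifestly independent of $n$.

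The remaining data in the rewritten operator---the group $L^\sharp/L$, its order, the quadratic form modulo $\Z$, the multiplication-by-$p$ bijection, and the Kronecker symbols built from these---are all preserved under the isomorphism $j$ of discriminant modules. Therefore the induced Hecke action on $(h_x)_x$ is literally the same for $\underline{L_1}$ and $\underline{L_2}$, and applying $I_j$ term by term yields
\[
  I_j\!\left(\phi \,|\, T^J(p)\right) \;=\; I_j(\phi) \,|\, T^J(p).
\]

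The principal obstacle is the $n$-dependent sign $\left(\frac{-1}{p}\right)^{(n+1)/2}$ appearing in the middle summand. This sign is essentially forced by the $n$-dependent transformation law of the theta functions under $\widetilde{S} \in \mathrm{Mp}(2,\Z)$ from Lemma~\ref{lem:theta_transformation}: passing between $\underline{L_1}$ and $\underline{L_2}$ alters the multiplier system of the $h_x$'s by $(\tau/i)^{(n_2 - n_1)/2}$, and the half-integral-weight Hecke action on such forms carries the precisely compensating sign. Making this compensation rigorous is the key technical point---it is the reason for the rank-parity assumption $n_1 \equiv n_2 \equiv 1 \pmod 2$---and once it is in place, comparing Fourier expansions of $I_j(\phi\,|\,T^J(p))$ and $I_j(\phi)\,|\,T^J(p)$ term by term completes the proof.
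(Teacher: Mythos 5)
Your reduction is set up correctly: writing $c(n',r')$ as a function of the class $x=r'+L$ and the invariant $D=n'-\beta(r')$, the first and third summands of the formula in Lemma~\ref{lem:hecke_fourier} become ``coefficient of $h_{px}$ at $p^2D$'' and ``coefficient of $h_{p^{-1}x}$ at $D/p^2$'', and with weight $k+\lfloor n/2\rfloor$ the exponents indeed collapse to $p^{k-2}$ and $p^{2k-3}$, all of which is transported intact by $j$. But the proof breaks down exactly at the point you flag and then wave through. With the Hecke operator \emph{defined} by the explicit Fourier formula, both sides of $I_j(\phi|T^J(p))=(I_j\phi)|T^J(p)$ have literally identical first and third summands, so there is nothing available to ``compensate'' the middle-term sign: the identity holds if and only if $\left(\frac{-1}{p}\right)^{(n_1+1)/2}=\left(\frac{-1}{p}\right)^{(n_2+1)/2}$ for all admissible $p$, i.e.\ if and only if $n_1\equiv n_2 \pmod 4$. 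Your claim that ``the half-integral-weight Hecke action on such forms carries the precisely compensating sign'' is not a mechanism that exists here (no Hecke action on the vector $(h_x)$ is defined independently of Lemma~\ref{lem:hecke_fourier}), and your assertion that the parity hypothesis $n_1\equiv n_2\equiv 1\pmod 2$ is what makes the signs work is false: parity alone does not control $(n+1)/2$ modulo $2$, so as you have argued it the middle terms could still differ by a sign for $p\equiv 3\pmod 4$.

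What actually closes the gap is that an isomorphism of discriminant modules between \emph{positive-definite even} lattices forces $n_1\equiv n_2\pmod 8$ (Milgram's formula: the Gauss sum $\sum_{x\in L^\sharp/L}e(\beta(x))=\sqrt{|L^\sharp/L|}\,e(n/8)$ is an invariant of $D_{\underline{L}}$; this is also why Corollary~\ref{cor:J_r_general} is stated only for $r_1\equiv r_2\pmod 8$). Once $n_1\equiv n_2\pmod 8$ is established, the two transcribed Hecke formulas coincide term by term and your comparison of Fourier expansions finishes the argument. So the proposal is incomplete rather than unsalvageable: you must add the signature-mod-$8$ step (or an equivalent Gauss-sum argument via Lemma~\ref{lem:theta_transformation}) and delete the ``compensating sign'' heuristic. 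Note also that the paper itself offers no proof of this statement---it is quoted from Ajouz's thesis---so there is no internal argument to compare against; your theta-decomposition strategy is the natural one, but it stands or falls on the point above.
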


\subsection{Jacobi forms of index $D_r$}\label{ss:JF_Dr}
Following \cite[$\S$ 3.3]{Mocanu}, we define the root lattice $D_r$ of rank $r$ as the set
\begin{equation*}
  D_r := \left\{ (x_1, ..., x_r) \in \Z^r \ : \ x_1 + \cdots + x_r \in 2 \Z \right\}
\end{equation*}
equipped with the Euclidean bilinear form:
\begin{equation*}
  \beta((x_1,...,x_r),(y_1,...,y_r)) :=
  x_1 y_1 + \cdots + x_r y_r.
\end{equation*}
The dual lattice of $D_r$ is given by
\begin{equation*}
  D_r^\sharp = \left\{ (x_1,...,x_r) \in \Z^r \cup {\left(\frac12 + \Z\right)}^r \ \right\}.
\end{equation*}
Let $\{e_i\}_i$ be the standard basis of $\Z^r$.
A set of representatives for the discriminant module $D_r^{\sharp}/D_r$ can be chosen as follows:
\begin{equation*}
  D_r^{\sharp}/ D_r =
  \begin{cases}
  \left\{ 
  0, e_r, \frac{e_1 + \cdots + e_r}{2},
  \frac{e_1 + \cdots + e_{r-1} - e_r}{2}
  \right\}, & \text{if $r > 1$}, \\
  \left\{ 
  0, e_1, \frac12 e_1, -\frac12 e_1
  \right\}, & \text{if $r = 1$}.
  \end{cases}
\end{equation*}

Assume that $r$ is an odd integer.
Then we have $D_r^{\sharp}/ D_r \cong \Z / 4 \Z$ as $\Z$-module.
The discriminant module $D_{D_r}$ of $D_r$ is given by
\begin{equation*}
  D_{D_r} \cong
  \left(  \Z/4\Z, x + 4 \Z \mapsto \frac{rx^2}{8} + \Z\right).
\end{equation*}
This implies that the discriminant module $D_{D_r}$ is determined solely by $r \!\! \mod 8$.
By virtue of Theorems~\ref{thm:bs_isom} and~\ref{thm:ali_hecke},
we obtain the following corollary:
\begin{cor}[{\cite[Thm. 2.5]{BS}}, {\cite[Thm. 4.2.4]{ali}}]\label{cor:J_r_general}
If $r_1$ and $r_2$ are odd integers such that
$r_1 \equiv r_2 \mod 8$, then
\begin{equation*}
  J_{k+\frac{r_1+1}{2},D_{r_1}} \cong J_{k+\frac{r_2+1}{2}, D_{r_2}}
\end{equation*}
as Hecke modules.
\end{cor}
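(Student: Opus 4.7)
The proof will be a direct application of the two preceding citations to the explicit computation of the discriminant module that immediately precedes the corollary. The plan is to verify three things in sequence: (a) both lattices have isomorphic discriminant modules; (b) the weight shift $\lfloor r_i/2\rfloor = (r_i-1)/2$ in Theorems~\ref{thm:bs_isom} and~\ref{thm:ali_hecke} can be matched to the weight $k+\frac{r_i+1}{2}$ appearing in the corollary; (c) the hypothesis of Theorem~\ref{thm:ali_hecke} (odd rank) is satisfied.

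For step (a), note that $r_1 \equiv r_2 \pmod{8}$ together with $r_1$ odd forces $r_2$ odd as well. Using the formula $D_{D_r} \cong (\Z/4\Z,\ x+4\Z \mapsto \tfrac{rx^2}{8}+\Z)$ already derived just above the statement, the identity map $\Z/4\Z \to \Z/4\Z$ provides an isomorphism $j \colon D_{D_{r_1}} \xrightarrow{\cong} D_{D_{r_2}}$: indeed, for each $x \in \Z/4\Z$ one has
\begin{eqnarray*}
\frac{r_1 x^2}{8} - \frac{r_2 x^2}{8} \;=\; \frac{(r_1-r_2) x^2}{8} \;\in\; \Z,
\end{eqnarray*}
since $8 \mid r_1 - r_2$. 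Hence the associated quadratic forms agree modulo $\Z$.

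For step (b), apply Theorem~\ref{thm:bs_isom} to the lattices $\underline{L_1} = D_{r_1}$ and $\underline{L_2} = D_{r_2}$ (of ranks $n_i = r_i$) with the integer parameter in that theorem taken to be $k+1$ instead of $k$. Since $r_i$ is odd, $\lfloor r_i/2\rfloor = (r_i-1)/2$, so the weights become $(k+1) + (r_i-1)/2 = k + (r_i+1)/2$, matching those in the corollary. This yields a $\C$-linear isomorphism
\begin{eqnarray*}
I_j \colon J_{k+\frac{r_1+1}{2},\,D_{r_1}} \xrightarrow{\ \cong\ } J_{k+\frac{r_2+1}{2},\,D_{r_2}}.
\end{eqnarray*}

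For step (c), invoke Theorem~\ref{thm:ali_hecke}: its hypothesis $n_1 \equiv n_2 \equiv 1 \pmod{2}$ is precisely the oddness of $r_1$ and $r_2$ established above, so $I_j$ intertwines the Hecke operators $T^J(p)$ on both sides (for all primes $p$ coprime to $2[D_{r_i}^{\sharp}/D_{r_i}] = 8$). This upgrades the vector space isomorphism to an isomorphism of Hecke modules, completing the proof. The main (very mild) thing to get right is the weight bookkeeping in step (b); no other obstacles arise, as all the substantive content is already packaged in the quoted theorems.
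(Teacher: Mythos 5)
Your proposal is correct and follows exactly the paper's route: the paper notes that $D_{D_r}\cong(\Z/4\Z,\;x\mapsto \tfrac{rx^2}{8}+\Z)$ depends only on $r\bmod 8$ and then cites Theorems~\ref{thm:bs_isom} and~\ref{thm:ali_hecke}, which is precisely your steps (a)--(c), with your weight bookkeeping in (b) being the only detail the paper leaves implicit.
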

Consequently, it is sufficient to consider 
the cases $r \in \{1,3,5, 7\}$.

Note that the space of Jacobi forms of lattice index $D_r$ is isomorphic to the space of 
Jacobi forms of matrix index $M_r$:
\begin{equation}\label{id:D_M}
  J_{k,D_r}
   \cong 
  J_{k, M_r},
\end{equation}
where the matrices $M_r$ are given by
$M_1 = 2$,
$M_3 = \begin{pmatrix} 1 & u & u \\ u & 1 & u \\ u & u & 1 \end{pmatrix}$,
$M_5 = \left( \begin{smallmatrix} 1 & u & 0 & 0 & u \\ 
   u & 1 & u & 0 & 0  \\ 0 & u & 1 &  u & 0 \\
   0 & 0 & u & 1 & u \\ u & 0 & 0 & u & 1  \end{smallmatrix} \right)$,
and
$M_7 = \left(\begin{smallmatrix} 1 & u & 0 & 0 & 0 & 0 & u \\ 
   u & 1 & u & 0 & 0 & 0 & 0   \\ 0 & u & 1 &  u & 0 & 0 & 0\\
   0 & 0 & u & 1 & u & 0 & 0 \\ 0 & 0 & 0 & u & 1 & u & 0 \\
   0 & 0 & 0 & 0 & u & 1 & u \\ u & 0 & 0 & 0 & 0 & u & 1 \end{smallmatrix} \right)$ 
with $u = 1/2$. Here $2M_r$ denotes the Gram matrix of $D_r$
with respect to the basis $\{2 e_1\}$ for $r=1$, and $\{e_1+e_2, e_2+e_3,...,e_{r-1}+e_r, e_r + e_1 \}$
for $r= \{ 3, 5, 7\}$.
One can easily verity that $\det (2 M_r) = 4$ for all $r \in \{ 1,3,5, 7\}$.

\section{Ikeda lifting}\label{sec:Ikeda_lift}

We denote by $M_k(\text{Sp}_n(\Z))$ the space of Siegel modular forms
of weight $k$ and degree $n$. 
Here $\text{Sp}_n(\Z)$ denotes the Siegel modular group of degree $n$ (consisting of $2n \times 2n$ matrices).
The subspace consisting of all Siegel cusp forms in $M_k(\text{Sp}_n(\Z))$ 
is denote by $S_k(\text{Sp}_n(\Z))$.

\begin{theorem}[Duke--Imamoglu~\cite{BK}, Ikeda~\cite{Ik}]
Let $k$ and $n$ be positive integers such that $k+n$ is even.
Then there exists an injective linear map
\begin{equation*}
  I_n \ : \   S_{2k}(1)  \to  S_{k+n}(\mathrm{Sp}_{2n}(\Z)),
\end{equation*}
which maps Hecke eigenforms to Hecke eigenforms.
In the case $n=1$, the map $I_1$ coincides with the Saito--Kurokawa lift.
For further details, we refer the reader refer to \cite{Ik}.
\end{theorem}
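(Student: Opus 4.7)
The plan is to reconstruct Ikeda's original argument, which proceeds via half-integral weight forms and local Siegel series. Starting from a normalized Hecke eigenform $f \in S_{2k}(1)$, I would first apply the Shimura/Kohnen correspondence to produce a Hecke eigenform $h = \sum_N c_h(N) q^N$ in the Kohnen plus space $S_{k+1/2}^+(4)$; this correspondence is multiplicative on Hecke eigenvalues, and by Waldspurger/Kohnen--Zagier the squares $c_h(|D|)^2$ are proportional to central twisted $L$-values $L(f,k,\chi_D)$ for fundamental discriminants $D$. Linearity of $I_n$ is then obtained by extending from the eigenform basis of $S_{2k}(1)$.

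Second, for each positive-definite half-integral symmetric matrix $T$ of size $2n$, I would set
\[
  A(T) := c_h(|D_T|) \, f_T^{k-1/2} \prod_{p} \tilde{F}_p(T; \alpha_p),
\]
where $D_T := (-1)^n \det(2T) = d_T f_T^2$ with $d_T$ a fundamental discriminant, $\alpha_p$ is the $p$-Satake parameter of $f$, and $\tilde{F}_p(T;X)$ is the Laurent polynomial normalization of the local Siegel series at $p$ (depending on $T$ only up to $\mbox{GL}_{2n}(\Z_p)$-equivalence, and equal to $1$ outside the primes dividing $f_T$). The candidate lift is
\[
  I_n(f)(Z) := \sum_{T > 0} A(T) \, e^{2\pi i \tr(TZ)},
\]
with $Z$ in the Siegel upper half-space of degree $2n$ and $T$ ranging over positive-definite half-integral symmetric matrices.

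Third, and this is the technical heart of the argument, one must prove $I_n(f) \in S_{k+n}(\mbox{Sp}_{2n}(\Z))$. Invariance under the Siegel parabolic is essentially built into the definition, since $c_h(|D_T|)$ depends only on $\mbox{GL}_{2n}(\Z)$-classes and the $f_T^{k-1/2}$-factor supplies the correct weight scaling. The hard part is invariance under an involution transverse to the parabolic; Ikeda reduces this to a rather intricate functional equation satisfied by the polynomials $\tilde{F}_p(T;X)$, which he establishes using Katsurada's explicit formula for local Siegel series together with an induction on the rank of $T$ modulo $p$. A conceptually cleaner alternative is to realize $I_n(f)$ as the theta lift from the orthogonal group $O(2n+1,2)$ of a Maass form built from $h$; automorphy is then automatic and the residual task is to identify the Fourier expansion with the formula above.

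Finally, once modularity is in hand, the vanishing of $A(T)$ for singular $T$ (a built-in degeneracy of $c_h$ and of the local factors) gives cuspidality, and a factor-by-factor Euler product computation yields the standard $L$-function
\[
  L(s, I_n(f), \sym{std}) \;=\; \zeta(s) \prod_{i=1}^{2n} L(s + k + n - i,\, f),
\]
so $I_n(f)$ is a Hecke eigenform whose eigenvalues are determined by those of $f$. Injectivity of $I_n$ then follows because distinct Hecke eigenforms in $S_{2k}(1)$ produce distinct Euler factors and hence distinct images, and the basis extension gives linearity. For $n=1$ a direct comparison of Fourier coefficients (or invocation of the Maass Spezialschar characterization) identifies $I_1$ with the classical Saito--Kurokawa lift.
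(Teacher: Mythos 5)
The paper does not actually prove this theorem: it is quoted from Duke--Imamo\=glu (via Breulmann--Kuss) and Ikeda, with all details deferred to \cite{Ik}. So there is no internal argument to compare against; what you have written is an outline of Ikeda's own proof, and structurally it is the right one (Kohnen plus space, Fourier coefficients assembled from normalized local Siegel series, modularity via their functional equation or a theta-lift realization, Euler-product computation of the standard $L$-function, and identification with the Saito--Kurokawa lift for $n=1$).

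Two caveats. First, a concrete error in your key formula: Ikeda's coefficient is $A(T)=c_h(|d_T|)\,f_T^{k-1/2}\prod_p \tilde F_p(T;\alpha_p)$, with $c_h$ evaluated at the absolute value of the \emph{fundamental} discriminant $d_T$, not at $|D_T|$ as you wrote. The dependence on the conductor $f_T$ is carried entirely by $f_T^{k-1/2}\prod_p\tilde F_p(T;\alpha_p)$, so inserting $c_h(|D_T|)$ double-counts the non-fundamental part and would not even reproduce the Maass/Saito--Kurokawa coefficients when $n=1$. Second, what you correctly identify as the technical heart, namely the functional equation of $\tilde F_p(T;X)$ under $X\mapsto X^{-1}$ (Katsurada's formula) and the deduction of invariance under the involution transverse to the Siegel parabolic, is only named, not carried out; likewise injectivity additionally requires that the lift be nonzero, i.e.\ that some $c_h(|d_T|)$ with $d_T$ fundamental is nonvanishing, which is Kohnen's nonvanishing result (the same fact this paper invokes in the proof of Lemma~\ref{lem:cusp_to_jacobi_cusp}). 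Since the paper itself defers the entire proof to \cite{Ik}, your attempt matches the cited argument in outline, but as it stands it is a roadmap with one misstated formula rather than a complete proof.
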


We denote by $L_n^*$ the set of all half-integral symmetric matrices of size $n$.
Let $\HH_{n}$ be the Siegel upper half space of size $n$.
A matrix $M \in L_n^*$ is called \textit{maximal},
if it satisfies the following condition: 
if $A^{-1} M {^t A^{-1}} \in L_n^*$ for some $A \in \text{GL}_n(\R) \cap M_n(\Z)$,
then $A \in \text{GL}_n(\Z)$.
Here $M_n(\Z)$ denotes the set of all $n \times n$ matrices with entries in $\Z$.

\begin{lemma}\label{lem:DDI_FJ}
 Let $I_{n}(f) \in S_{k+n}(\mbox{\rm{Sp}}_{2n}(\Z))$ be the Ikeda lift of $f \in S_{2k}(1)$.
 We consider the Fourier--Jacobi expansion of $I_n(f)$:
 \begin{equation*}
   I_n(f) \left( \begin{pmatrix} \tau & z \\ {^t z} & \omega \end{pmatrix} \right)
   = \sum_{M \in L_{2n-1}^*} \phi_{M}(\tau,z) e(M\omega),
 \end{equation*}
 where  $\tau \in \HH$, $z \in \C^{2n-1}$, and $\omega \in \HH_{2n-1}$.
 Then each $\phi_M$ is a Jacobi cusp form of weight $k+n$ and index $M$.
 Moreover, if $f$ is a Hecke eigenform and $M$ is maximal, 
 then $\phi_M$ is also a Hecke eigenform.
\end{lemma}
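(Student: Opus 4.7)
The plan is to prove the two assertions of the lemma separately, the first being a routine unwinding of the Fourier--Jacobi expansion and the second requiring Ikeda's explicit Fourier coefficient formula.

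For the first assertion, that $\phi_M \in J_{k+n,M}^{cusp}$, I would embed the Jacobi group associated to $M$ into $\mbox{Sp}_{2n}(\Z)$ as the subgroup fixing the Fourier--Jacobi decomposition: elements of the form $\left( \begin{smallmatrix} a & 0 & b & 0 \\ 0 & I & 0 & 0 \\ c & 0 & d & 0 \\ 0 & 0 & 0 & I \end{smallmatrix}\right)$ for $\left(\begin{smallmatrix} a & b \\ c & d \end{smallmatrix} \right) \in \mbox{SL}(2,\Z)$, together with the Heisenberg-type elements parametrized by $(\lambda, \mu) \in \Z^{2n-1} \times \Z^{2n-1}$. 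Applying the modular transformation law of $I_n(f)$ under these matrices and comparing Fourier--Jacobi coefficients of $e(\tr(M\omega))$ on both sides yields precisely the transformation laws (i) and (ii) of Definition~\ref{df:jacobi_lattice} for $\phi_M$. The cusp-form property of $I_n(f)$ forces positive-definiteness of the full index $T = \left(\begin{smallmatrix} n' & r'/2 \\ {}^t r'/2 & M \end{smallmatrix}\right)$, equivalent by a Schur complement to $n' > \frac{1}{4}\,r' M^{-1} {}^t r'$, which is exactly the strict Jacobi cusp condition.

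For the second assertion, the key input is Ikeda's explicit formula for the Fourier coefficients of $I_n(f)$: if $f$ is a normalized Hecke eigenform with $p$-th Satake parameters $\{\alpha_p^{\pm 1}\}$, then for positive-definite $T \in L_{2n}^*$,
\begin{eqnarray*}
  c_{I_n(f)}(T) &=& f_T^{\,k-1/2}\,\prod_p \tilde{F}_p(T;\alpha_p),
\end{eqnarray*}
where $\det(2T) = D_T f_T^2$ with $(-1)^n D_T$ a fundamental discriminant and $\tilde{F}_p$ is a local polynomial in $\alpha_p$ coming from the Siegel series attached to $T$ at $p$. Since $c_{\phi_M}(n',r') = c_{I_n(f)}(T)$ for $T$ with bottom-right block $M$, verifying that $\phi_M |_{k+n,M} T^J(p^2) = \lambda_p \phi_M$ for a prime $p \nmid 2\det(2M)$ amounts to plugging this formula into the Hecke operator definition of Lemma~\ref{lem:hecke_fourier} and checking that the three types of terms ($(p^2 n', pr')$, $(n', r')$, and the lowered-index term) combine into a single scalar times $c_{\phi_M}(n', r')$. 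The maximality of $M$ is decisive: it guarantees that for every $(n'', r'')$ arising in the Hecke sum the matrix $T'' = \left(\begin{smallmatrix} n'' & r''/2 \\ {}^t r''/2 & M \end{smallmatrix}\right)$ still admits $M$ as a maximal constituent, so Ikeda's formula applies uniformly with the same global factor $f_{T''}^{\,k-1/2}\prod_{q\neq p}\tilde{F}_q$ up to a common constant, and only the local factor $\tilde{F}_p$ needs to be tracked.

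The main obstacle is this local $p$-adic computation. One must verify that the linear combination dictated by Lemma~\ref{lem:hecke_fourier} coincides with the three-term recurrence satisfied by $\tilde{F}_p$, and that the resulting scalar $\lambda_p$ depends only on $\alpha_p + \alpha_p^{-1}$ and powers of $p$, not on $(n',r')$. This identity is known in various forms in the literature on Fourier--Jacobi coefficients of Ikeda lifts (work of Kohnen, Kojima, Yamana, Ibukiyama--Katsurada, and the author, among others), and the maximality hypothesis on $M$ is precisely what allows the argument to proceed cleanly, since otherwise sub-maximal index matrices would appear in the Hecke sum and Ikeda's formula would have to be replaced by the more complicated formula involving Eichler--Zagier-type lowering operators.
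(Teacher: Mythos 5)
Your first assertion is handled the same way the paper does (it simply cites Ziegler for the standard fact that Fourier--Jacobi coefficients of Siegel cusp forms are Jacobi cusp forms), and your Schur-complement argument for the cusp condition is fine. The problem is in the second assertion, which is where the lemma's content lies. First, as written Ikeda's formula is misquoted: the coefficient of $I_n(f)$ at $T$ is $c(|D_T|)\,f_T^{\,k-1/2}\prod_p \tilde F_p(T;\alpha_p)$, where $c(|D_T|)$ is a Fourier coefficient of the half-integral weight form attached to $f$; this factor happens to be harmless here (all matrices occurring in the Hecke sum for fixed $(n',r')$ have the same fundamental discriminant), but it must be accounted for. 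More seriously, the step that actually proves the eigenform property --- that the three-term combination from Lemma~\ref{lem:hecke_fourier} collapses, via the local factors $\tilde F_p$, to a scalar independent of $(n',r')$ --- is not carried out; it is only asserted to be ``known in various forms in the literature,'' so the proposal never establishes the claim it sets out to prove. Finally, your explanation of why maximality of $M$ is needed is incorrect: Ikeda's formula holds for every positive-definite $T$, maximal or not, and at the odd primes $p$ where the Hecke operators act, $M$ is $\Z_p$-unimodular anyway, so maximality plays no role in making the formula ``apply uniformly.''

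The paper's actual mechanism is different and explains where maximality genuinely enters: by Boecherer (Satz 7), the $M$-th Fourier--Jacobi coefficient of the Siegel--Eisenstein series is a Jacobi--Eisenstein series \emph{precisely when $M$ is maximal}, and Jacobi--Eisenstein series are Hecke eigenforms by Ajouz. Hence the Fourier coefficients of the Siegel--Eisenstein series satisfy, for fixed maximal $M$, exactly the linear relations expressing the Hecke eigen equation; since the Fourier coefficients of $I_n(f)$ have the same local product structure (with the Satake parameters $\alpha_p$ replacing the Eisenstein specialization), these relations transfer to $\phi_M$, and no direct $p$-adic computation with $\tilde F_p$ is needed. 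If you want to keep your direct-computation route, you must either actually prove the local identity for $\tilde F_p$ or cite it precisely (e.g.\ the author's earlier work on Fourier--Jacobi expansions of Ikeda lifts), and you should relocate the maximality hypothesis to where it is really used.
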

\begin{proof}
 It is well known that $\phi_M$ is a Jacobi cusp form of weight $k+n$ of index $M$ for any $M \in L_{2n-1}^*$
 (see \cite[Introduction]{Zi}).
 The fact that $\phi_M$ is a Hecke eigenform when $M$ is maximal follows from the following observations:
 \begin{itemize}
 \item[(i)] Jacobi--Eisenstein series is a Hecke eigenform (see~\cite[Thm 3.3.18]{ali}). 
 \item[(ii)] The $M$-th Fourier--Jacobi coefficient of Siegel--Eisenstein series
 is a Jacobi--Eisenstein series if $M$ is maximal (see~\cite[Satz 7]{Bo}). 
 \item[(iii)]  The Ikeda lift $I_n(f)$ inherits certain relations among the Fourier coefficients of Siegel--Eisenstein series.
 Since the $M$-th Fourier--Jacobi coefficient of Siegel--Eisenstein series is a Hecke eigenform,
 we conclude that $\phi_M$ is also a Hecke eigenform.
 \end{itemize}
 We omit the further details (see, for example, \cite{FJlift}).
\end{proof}
 
 \begin{lemma}\label{lem:cusp_to_jacobi_cusp}
  Assume $2n = r+1$.
  If $2M_r$ is the Gram matrix of $D_r$ for $r \in \{1,3,5,7\}$, 
  and if $f$ is a Hecke eigenform,
  then the Fourier--Jacobi coefficient $\phi_{M_r}$ in Lemma~\ref{lem:DDI_FJ} is not identically zero.
  In particular, $\phi_{M_r}$ is a non-zero Hecke eigenform in $J^{\text{cusp}}_{k+\frac{r+1}{2},M_r}$.
 \end{lemma}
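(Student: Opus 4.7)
The plan is to show $\phi_{M_r} \not\equiv 0$ by exhibiting a single non-vanishing Fourier coefficient of $I_n(f)$ of the form $a_{I_n(f)}\!\left(\smat{n_0}{{}^t r_0/2}{r_0/2}{M_r}\right) \neq 0$ for some $(n_0, r_0) \in \Z \times \Z^r$. By the Fourier--Jacobi expansion written in Lemma~\ref{lem:DDI_FJ}, this coefficient is precisely the $(n_0, r_0)$-th Fourier coefficient of $\phi_{M_r}$, so its non-vanishing forces $\phi_{M_r} \not\equiv 0$; combined with the Hecke-eigenform assertion of Lemma~\ref{lem:DDI_FJ}, this then yields $\phi_{M_r}$ as a non-zero Hecke eigenform in $J^{cusp}_{k+\frac{r+1}{2}, M_r}$.

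The first step is to invoke Ikeda's explicit formula for the Fourier coefficients of $I_n(f)$: if $g \in S_{k+1/2}^{+}(4)$ denotes the Kohnen--Zagier correspondent of $f$, then for any positive definite half-integral $T$ of size $2n$, the coefficient $a_{I_n(f)}(T)$ is expressed as $c_g(|D_T|/\mathfrak{f}_T^{\,2})$ multiplied by a power of the content $\mathfrak{f}_T$ and a product of normalized local Siegel series $\tilde{F}_p(T;\alpha_p)$ in the Satake parameters $\alpha_p$ of $f$. Since $f$ is a non-zero Hecke eigenform, its Shimura preimage $g$ is non-zero, so one can fix a fundamental discriminant $D_0$ with $(-1)^k D_0 > 0$ and $c_g(|D_0|) \neq 0$. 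The second step is to construct $T$ containing $M_r$ in its lower right $r \times r$ block whose reduced discriminant equals $|D_0|$, and to verify that each local factor $\tilde{F}_p(T;\alpha_p)$ is simultaneously non-zero.

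The main obstacle is the arithmetic compatibility between the plus-space condition on $g$ and the fixed $2$-adic structure of $M_r$: since $\det(2M_r) = 4$, the class of $|D_T| \bmod 8$ is constrained by $r$ and by the choice of the extension $T' \in L^*_{2n-1}$ together with the off-diagonal vector $r_0 \in \Z^r$, and one must match the congruence $(-1)^k |D_T| \equiv 0, 1 \pmod 4$ required by the plus-space. I would resolve this by scaling an auxiliary diagonal entry of $T'$ by an odd square and translating $r_0$ modulo $2 M_r \Z^r$ to land in the correct class, using that the local Siegel series $\tilde{F}_p$ are polynomials in the Satake parameters whose vanishing locus is a proper closed subset of non-degenerate $T$, so generic $T$ produces a non-zero product. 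In the case $r = 1$ (i.e.\ $n=1$, the Saito--Kurokawa lift) this recovers the classical well-known non-vanishing of the first Fourier--Jacobi coefficient; for $r = 3, 5, 7$ the argument proceeds analogously, with the maximality of $M_r$ inside its genus ensuring that suitable extensions $T$ exist.
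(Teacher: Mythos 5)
Your overall strategy coincides with the paper's: produce a matrix $T$ with lower-right block $M_r$ such that $a_{I_n(f)}(T)\neq 0$, read this coefficient off Ikeda's explicit formula, and feed it the non-vanishing of the Shimura correspondent at a fundamental discriminant. The genuine gap is in how you handle the decisive $2$-adic constraint. The discriminants $8(n'-\beta(r'))$ realizable by extensions of $M_r$ lie only in the classes $0,4,-r \bmod 8$, so you need a fundamental discriminant $D_0$ with $c_g(|D_0|)\neq 0$ in one of these classes. If the only available $D_0$ is odd and $\not\equiv -r \bmod 8$, your remedies do not work: translating $r_0$ modulo $2M_r\Z^r$ only moves you among the classes $0,4,-r\bmod 8$ and can never reach the class of $D_0$, while scaling a diagonal entry by an odd square destroys fundamentality, so the content $\mathfrak{f}_T$ becomes larger than $1$ and the factors $\tilde F_p(T;\alpha_p)$ become nonconstant polynomials evaluated at the \emph{fixed} Satake parameters of $f$. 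At that point the genericity argument (``the vanishing locus is a proper closed subset, so generic $T$ works'') is not sound: the $\alpha_p$ cannot be varied, $T$ ranges over a discrete and heavily constrained set, and nothing rules out $\tilde F_p(T;\alpha_p)=0$ on all the $T$ you construct. A smaller point: the inference ``$g\neq 0$, hence $c_g(|D_0|)\neq 0$ for some fundamental $D_0$'' is not automatic for arbitrary nonzero $g$; it uses the Hecke-eigenform recursions, which is exactly why the paper cites Kohnen rather than mere nonvanishing of $g$.

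The paper closes this gap with one precise input: by Kohnen (p.~260), an eigenform in the plus space has a non-vanishing coefficient at some \emph{fundamental} discriminant $D\equiv 0\bmod 4$. Since $D_r^\sharp/D_r$ contains cosets with $\beta\equiv 0$ and $\beta\equiv \tfrac12 \bmod 1$, every positive multiple of $4$ occurs as $8(n'-\beta(r'))$, so this $|D|$ is the discriminant of an extension $T$ of $M_r$; and because $|D|$ is fundamental, the content is $1$ and every normalized local Siegel series is the constant $1$, so $a_{I_n(f)}(T)$ equals the nonzero coefficient of the half-integral weight form with no local analysis at all. If you replace your scaling-plus-genericity step by this choice of discriminant, your argument becomes the paper's proof.
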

 \begin{proof}
 Since there exists $u_1, u_2 \in D_r^{\sharp}$ such that $\beta(u_1) = 0$ and $\beta(u_2) = \frac12$,
 the set $\{ 8(n' - \beta(u)) \, : \, n' \in \Z, u \in D_r^{\sharp}\}$ contains all positive integers divisible by $4$.
 It is shown in \cite[Page 260]{Ko} that there exists a fundamental discriminant $D \equiv 0 \mod 4$ such that $c_h(|D|) \neq 0$,
 where $h \in S_{k+n+1/2}^+(4)$ is the modular form of half-integral weight corresponding to $f$ via the Shimura correspondence, 
 and $c_h(|D|)$ denotes its $|D|$-th Fourier coefficient.
 By the explicit construction of the Fourier coefficients of the Ikeda lift (cf.~\cite[p.~642]{Ik}), 
 this non-vanishing property implies that $\phi_{M_r}$ is not identically zero.
 The result then follows from Lemma~\ref{lem:DDI_FJ}.
 \end{proof}
 
 Therefore, by virtue of the isomorphism (\ref{id:D_M}), we obtain the following proposition.
 \begin{prop}\label{prop:J_old}
 Assume that $k \equiv \frac{r+1}{2} \mod 2$
 and $k \geq 2$.
 Then there exists an injective linear map $\mathcal{I}$ defined by the composition of the Ikeda lift and the Fourier--Jacobi expansion:
 \begin{equation*}
  \mathcal{I} \ : \
  M_{2k}^{\epsilon_1}(1) \to J_{k+\frac{r+1}{2},D_r}. 
 \end{equation*}
 The map $\mathcal{I}$ sends
 Eisenstein series to Jacobi--Eisenstein series
 and $S_{2k}^{\epsilon_1}(1)$ to $J_{k+\frac{r+1}{2},D_r}^{\text{cusp}}$.
 Moreover, it commutes with the action of the Hecke operators.
 \end{prop}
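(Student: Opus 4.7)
The map $\mathcal{I}$ is constructed as follows. Set $n := (r+1)/2$, so $k+n$ is an even integer $\geq 2$ by the hypothesis $k \equiv n \pmod 2$ and $k \geq 2$; in particular Ikeda's theorem yields $I_n : S_{2k}(1) \to S_{k+n}(\mathrm{Sp}_{2n}(\Z))$. A small check using the discussion preceding the proposition shows $M_{2k}^{\epsilon_1}(1) = M_{2k}(1)$ in all four cases. For $f \in M_{2k}(1)$, decompose $f = c E_{2k} + g$ with $c \in \C$ and $g \in S_{2k}(1)$, and form
\[
  F_f \ :=\ c\, E_{k+n}^{(2n)} \ +\ I_n(g) \ \in \ M_{k+n}(\mathrm{Sp}_{2n}(\Z)),
\]
where $E_{k+n}^{(2n)}$ is the Siegel Eisenstein series of weight $k+n$ and degree $2n$. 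Taking the Fourier-Jacobi expansion $F_f\!\left(\smat{\tau}{z}{{^t z}}{\omega}\right) = \sum_M \phi_{f,M}(\tau,z)\, e(M\omega)$, I would set $\mathcal{I}(f) := \phi_{f, M_r}$, viewed in $J_{k+n, D_r}$ via the isomorphism (\ref{id:D_M}). Linearity is automatic from linearity of $I_n$, of $f \mapsto E_{k+n}^{(2n)}$, and of the Fourier-Jacobi coefficient.

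Next I would verify that the image lies in the announced subspaces. Lemma~\ref{lem:DDI_FJ} gives the inclusion $\mathcal{I}(S_{2k}^{\epsilon_1}(1)) \subseteq J_{k+n, D_r}^{cusp}$ at once. For the Eisenstein part I would invoke B\"ocherer's result \cite[Satz 7]{Bo} (the ingredient already used in the proof of Lemma~\ref{lem:DDI_FJ}) stating that the $M$-th Fourier-Jacobi coefficient of $E_{k+n}^{(2n)}$ is a Jacobi Eisenstein series whenever $M$ is maximal. Maximality of $M_r$ is equivalent to the lattice $D_r$ admitting no proper even integral overlattice, which amounts to the discriminant module $D_{D_r} \cong (\Z/4\Z,\, x \mapsto rx^2/8 + \Z)$ recalled in \S\ref{ss:JF_Dr} having only the trivial isotropic element. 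For odd $r$ the congruence $rx^2 \equiv 0 \pmod 8$ with $x \in \{0,1,2,3\}$ forces $x = 0$, so maximality holds, and $\mathcal{I}(E_{2k})$ is a Jacobi Eisenstein series.

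For Hecke equivariance and injectivity I would work in a Hecke eigenbasis $\{E_{2k}, f_1, \ldots, f_d\}$ of $M_{2k}(1)$. By Ikeda's theorem each $I_n(f_i)$ is a Siegel Hecke eigenform, by Lemma~\ref{lem:DDI_FJ} the coefficient $\phi_{f_i, M_r}$ is a Jacobi Hecke eigenform, and by Lemma~\ref{lem:cusp_to_jacobi_cusp} it is non-zero. Tracing the Satake parameters produced by the Ikeda lift through the $M_r$-th Fourier-Jacobi coefficient and matching them against the $T^J(p^2)$ formula of Lemma~\ref{lem:hecke_fourier} shows that $\mathcal{I}$ intertwines the classical Hecke operator $T(p)$ on $M_{2k}(1)$ with $T^J(p^2)$ on $J_{k+n, D_r}$ for every odd prime $p$. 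Distinct $f_i$ thus yield images $\mathcal{I}(f_i)$ with distinct Jacobi eigenvalue sequences and are therefore linearly independent, while $\mathcal{I}(E_{2k})$ is independent from them as the unique non-cuspidal image; injectivity of $\mathcal{I}$ follows.

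The principal obstacle is the explicit Hecke-matching in the last step: one must verify that the Jacobi Hecke operator $T^J(p^2)$ from Lemma~\ref{lem:hecke_fourier}, applied to the $M_r$-th Fourier-Jacobi coefficient of $I_n(f)$ or of $E_{k+n}^{(2n)}$, returns the scalar multiple predicted by the $T(p)$-eigenvalue of $f$ (respectively $E_{2k}$). This computation is implicit in \cite{Ik} and \cite{Murase}, but assembling it cleanly and uniformly for every odd $p$, and keeping track of the normalising factors that produce the Hecke-module identification in the exact form stated, is where the real care lies.
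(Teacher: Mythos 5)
Your proof is correct and follows essentially the same route as the paper: note $M_{2k}^{\epsilon_1}(1)=M_{2k}(1)$, send the Eisenstein part to the Siegel Eisenstein series and the cusp forms through the Ikeda lift, take the $M_r$-th Fourier--Jacobi coefficient, and invoke B\"ocherer \cite[Satz 7]{Bo} for the Eisenstein image and Lemmas~\ref{lem:DDI_FJ} and \ref{lem:cusp_to_jacobi_cusp} for cuspidality, the eigenform property and nonvanishing, whence injectivity and Hecke equivariance. The explicit eigenvalue-matching you flag as the principal obstacle is exactly the point the paper also does not carry out but delegates to Lemma~\ref{lem:cusp_to_jacobi_cusp} (whose proof in turn defers to \cite{FJlift}), and your additional checks (maximality of $M_r$ via the discriminant module, the multiplicity-one injectivity argument) only make the argument more explicit than the printed one.
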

 \begin{proof}
  Recall that $\epsilon_1 = - \left(\frac{-4}{r }\right)$.
  The condition $k \equiv \frac{r+1}{2} \!\! \mod 2$ is equivalent to
  $(-1)^k = (-1)^{\frac{r+1}{2}} = \epsilon_1$.
  Thus, under the assumptions of this proposition, we have $M_{2k}(1) = M_{2k}^{\epsilon_1}(1)$.
  
  An Eisenstein series in $M_{2k}^{\epsilon_1}(1)$ is mapped to a Siegel--Eisenstein series via the Ikeda lift.
  Since the $M$-th Fourier--Jacobi coefficient of a Siegel--Eisenstein series is a Jacobi--Eisenstein series 
  whenever $M$ is maximal, 
  the map $\mathcal{I}$ sends Eisenstein series in $M_{2k}^{\epsilon_1}(1)$ to Jacobi--Eisenstein series
  in $J_{k+\frac{r+1}{2},D_r}$.
  
  The reminder of the proof follows from Lemma~\ref{lem:cusp_to_jacobi_cusp}.
 \end{proof}
 
We are now ready to prove Theorem~\ref{thm:J_decom}.
\begin{proof}[Proof of Theorem~\ref{thm:J_decom}]
By definition, 
the subspace $J_{k+\frac{r+1}{2},D_r}^{\text{old}}$ is the image of the map $\mathcal{I}$ given in Proposition~\ref{prop:J_old}, 
and $J_{k+\frac{r+1}{2},D_r}^{\text{new}}$ is defined as the orthogonal complement of $J_{k+\frac{r+1}{2},D_r}^{\text{old}}$ 
in $J_{k+\frac{r+1}{2},D_r}$ with respect to the Petersson scalar product.
Consequently, we have the direct sum decomposition 
$J_{k+\frac{r+1}{2},D_r} = J_{k+\frac{r+1}{2},D_r}^{\text{new}} \oplus J_{k+\frac{r+1}{2},D_r}^{\text{old}}$.
The isomorphism $J_{k+\frac{r+1}{2},D_r}^{\text{old}} \cong M_{2k}^{\epsilon_1}(1)$ as Hecke modules 
then follows directly from Proposition~\ref{prop:J_old}.
\end{proof}

\section{Theta decomposition of Jacobi forms of index $D_r$}\label{sec:decom}
 The structure of the lattice $\underline{D_r} = (D_r, \beta)$ for $r \in \{1,3,5, 7\}$
 has been explained in~$\S$\ref{ss:JF_Dr}.
We define a set of representatives $\{v_0,v_2,v_1,v_3\}$ for the discriminant module $D_r^\sharp/D_r$ as follows:
\begin{itemize}
\item[$\cdot$] For $r = 1$, we set $v_j := \frac{j}{2} e_1 $ for $j \in \{0,1,2,3\}$. 
\item[$\cdot$] For $r \in \{ 3,5,7\}$, we set 
\begin{equation*}
 v_0 := 0, \quad v_2 := e_r, \quad v_1 := \frac{e_1 + \cdots + e_r}{2}, \quad v_3 := \frac{e_1 + \cdots + e_{r-1} - e_r}{2} . 
\end{equation*}
\end{itemize}
The values of the quadratic form on these representatives are given by
\begin{equation*}
  \beta(v_0) = 0, \quad \beta(v_2) = \frac12, \quad \beta(v_1) = \beta(v_3) = \frac{r}{8}.
\end{equation*}
Note that the map $v_j + D_r \mapsto j + 4 \Z$ induces an isomorphism $D_r^\sharp / D_r \cong \Z/(4\Z)$
as $\Z$-modules.
Under this identification, the associated quadratic form on $D_r^\sharp/ D_r$ is given by
\begin{equation*}
  v_j + D_r  \mapsto 
  \beta(v_j) + \Z =  \frac{j^2 r}{8} + \Z,
\end{equation*}
and the associated bilinear form satisfies $\beta(v_i, v_j) + \Z = \frac{i j r}{4} + \Z$.

For simplicity, we write
\begin{equation*}
 \theta_{r,j} := \theta_{D_r, v_j}.
\end{equation*}
By virtue of Lemma~\ref{lem:theta_transformation}, the following transformation formulas hold:
 \begin{equation}\label{id:theta_1}
   \theta_{r,j}(\tau + 1,z) = e(j^2 r/8) \theta_{r,j}(\tau,z)
\end{equation} 
 and
\begin{equation}\label{id:theta_2}
   \theta_{r,j}(-\tau^{-1}, \tau^{-1}z) 
   =
   \frac12 
   \tau^{r/2} e(-r/8)
   e(\tau^{-1} \beta(z))
    \sum_{i = 0}^3 e(-i j r / 4) \theta_{r,i}(\tau,z).
 \end{equation}
Any Jacobi form $\phi \in J_{k,D_r}$ admits a unique decomposition
\begin{equation*}
  \phi(\tau,z) =
  \sum_{j=0}^3 h_j(\tau) \theta_{r,j}(\tau,z),
\end{equation*}
where the functions $h_j(\tau)$ are uniquely determined by $\phi$ (see Lemma~\ref{lem:theta_decom}).

Since $2 v_0, 2 v_2, v_1 + v_3 \in D_r$,
the symmetric properties of the theta functions $\theta_{r,j}$ imply that
$  \theta_{r,j}(\tau,-z) = \theta_{r,j}(\tau,z)$ for  $j \in \{ 0,2\}$,
and
$
  \theta_{r,1}(\tau,-z) =  \theta_{r,3}(\tau,z).
$
Using the parity property of the Jacobi forms, 
$\phi(\tau,-z) = (-1)^k \phi(\tau,z)$, 
we obtain the follwing constraints on the functions $h_j(\tau)$: 
\begin{equation*}
 \begin{cases}
   h_1(\tau) = h_3(\tau), & \mbox{ if $k$ is even}, \\
  h_0(\tau) = h_2(\tau) = 0 \mbox{ and }  
  h_1(\tau) = - h_3(\tau), & \mbox{ if $k$ is odd}.
 \end{cases}
\end{equation*}
Consequently, the Jacobi form $\phi$ can be decomposed as follows: 
\begin{align*}
  &  \phi(\tau,z) \\
   & = 
  \begin{cases}
  h_0(\tau) \theta_{r,0}(\tau,z)
  +
  h_2(\tau) \theta_{r,2}(\tau,z)
  +
  h_1(\tau) (\theta_{r,1}(\tau,z) + \theta_{r,1}(\tau,-z)), & \text{ if $k$ is even}, \\
  h_1(\tau) (\theta_{r,1}(\tau,z) - \theta_{r,1}(\tau,-z)),
  & \text{ if $k$ is odd}.
  \end{cases}
\end{align*}
Note that the constant term of $\phi$ coincides with the constant term of $h_0$.

\begin{lemma}\label{lem:dim_Eisenstein}
Let the notation be as above.
A Jacobi form $\phi \in J_{k,D_r}$ is a Jacobi cusp form if and only if
the constant term of $h_0(\tau)$ is zero.

Moreover, we have $\dim J_{k,D_r} - \dim J_{k,D_r}^{\text{cusp}} \leq 1$.
In particular, if $k$ is odd, then $J_{k,D_r} = J_{k,D_r}^{\text{cusp}}$.
\end{lemma}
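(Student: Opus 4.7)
The plan is to use the theta decomposition $\phi = \sum_{j=0}^{3} h_j \theta_{r,j}$ displayed just before the lemma, together with the explicit description of the Fourier coefficients, to pin down which coefficients can obstruct cuspidality. By Definition~\ref{df:jacobi_lattice}, $\phi$ is a Jacobi cusp form precisely when $c(n',r') = 0$ for every $(n',r') \in \Z \times D_r^\sharp$ with $n' = \beta(r')$. The periodicity $c(n',r') = c(n' + \beta(r',\lambda) + \beta(\lambda),\, r' + \lambda)$ for $\lambda \in D_r$ shows that this condition depends only on the coset $r' + D_r$, so it suffices to examine one representative $v_j$ of each of the four cosets of $D_r^\sharp/D_r$.

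Next I would classify the cosets $v_j + D_r$ by whether any of them contains a representative $r'$ with $\beta(r') \in \Z$. For $y \in D_r$ one has $\beta(v_j + y) = \beta(v_j) + \beta(v_j, y) + \beta(y)$, and since $v_j \in D_r^\sharp$ and $D_r$ is an even lattice, both $\beta(v_j, y)$ and $\beta(y)$ lie in $\Z$. Hence $\beta(v_j + y) \equiv \beta(v_j) \pmod{\Z}$. Using $\beta(v_0) = 0$, $\beta(v_2) = \tfrac12$, and $\beta(v_1) = \beta(v_3) = r/8$ with $r$ odd, the only coset intersecting $\{r' : \beta(r') \in \Z\}$ is $v_0 + D_r$; moreover, within this coset the unique element with $\beta(r') = 0$ is $r' = 0$. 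Consequently $\phi$ is a Jacobi cusp form if and only if $c(0,0) = 0$, which is precisely the statement that the constant term of $h_0$ vanishes.

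The dimension bound follows immediately: the linear functional $\phi \mapsto c(0,0)$ on $J_{k,D_r}$ has kernel $J_{k,D_r}^{cusp}$, so $\dim J_{k,D_r} - \dim J_{k,D_r}^{cusp} \leq 1$. When $k$ is odd, the parity computation recorded just before the lemma statement forces $h_0 = 0$, whence $c(0,0) = 0$ automatically and $J_{k,D_r} = J_{k,D_r}^{cusp}$. I do not anticipate any genuine obstacle in this argument; the only point that requires attention is verifying the integrality $\beta(v_j, y) \in \Z$ from $v_j \in D_r^\sharp$, after which the identification of the single non-cusp Fourier coefficient is straightforward bookkeeping.
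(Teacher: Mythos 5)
Your proposal is correct and follows essentially the same route as the paper: reduce the cuspidality condition via the periodicity of Fourier coefficients to the coset representatives $v_j$, observe that $\beta(r')\in\Z$ forces $r'\in D_r$ so the only boundary coefficient is $c(0,0)$ (the constant term of $h_0$), and then read off the codimension-one bound and the vanishing of $h_0$ for odd $k$.
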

\begin{proof}
Let $(n',r') \in \Z \times D_r^\sharp$.
If $n' - \beta(r') = 0$, then $\beta(r') \in \Z$, which implies $r' \in D_r$.

Let $c(n',r')$ denote the $(n',r')$-th Fourier coefficient of $\phi$.
Recall that for any $(n'_i,r'_i) \in \Z \times D_r^\sharp$ ($i = 1,2$),
if $n'_1-\beta(r'_1) = n'_2-\beta(r'_2)$ and $r'_1 \equiv r'_2 \mod D_r$,
then $c(n'_1,r'_1) = c(n'_2,r'_2)$.
It follows that if $c(0,v_0) = 0$, then $c(n', r') = 0$ for all $(n',r')$ satisfying 
$n' - \beta(r') = 0$, and thus $\phi$ is a Jacobi cusp form.

Since $c(0,v_0)$ is the constant term of $h_0$, 
$\phi$ is a Jacobi cusp form if and only if the constant term of $h_0$ is zero.
This condition imposes at most one linear constraint, which implies
the inequality $\dim J_{k,D_r} - \dim J_{k,D_r}^{\text{cusp}} \leq 1$.

If $k$ is odd, we have already seen that $h_0(\tau) = 0$ for any $\phi \in J_{k,D_r}$.
Thus, the constant term of $h_0$ is necessarily zero, leading to the equality 
$J_{k,D_r} = J_{k,D_r}^{\text{cusp}}$.
\end{proof}

It is known that if $k+\frac{r+1}{2} \equiv 0 \!\! \mod 2$ and $k \geq 2$,
then there exists the Jacobi--Eisenstein series $E_{k+\frac{r+1}{2},D_r} \in J_{k+\frac{r+1}{2},D_r}$
(see \cite[Thm. 4.1]{Mocanu_JN}).
In this case, the codimension of the space of cusp forms is exactly one: 
$\dim J_{k+\frac{r+1}{2},D_r} - \dim J_{k+\frac{r+1}{2},D_r}^{\text{cusp}} = 1$.

\begin{lemma}\label{lem:cusp_cusp_new}
 For $k \geq 2$, we have $J_{k+\frac{r+1}{2},D_r}^{\text{new}} \subset J_{k+\frac{r+1}{2},D_r}^{\text{cusp}}$.
 In particular, the equality $J_{k+\frac{r+1}{2},D_r}^{\text{cusp, new}} = J_{k+\frac{r+1}{2},D_r}^{\text{new}}$ 
 holds for $k \geq 2$.
\end{lemma}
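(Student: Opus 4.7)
The plan is to split into two cases by the parity of $k+\frac{r+1}{2}$. When $k+\frac{r+1}{2}$ is odd, Lemma~\ref{lem:dim_Eisenstein} gives $J_{k+\frac{r+1}{2},D_r} = J_{k+\frac{r+1}{2},D_r}^{cusp}$, so the inclusion is immediate. The real case is $k+\frac{r+1}{2}$ even with $k\geq 2$, where the Jacobi-Eisenstein series $E := E_{k+\frac{r+1}{2},D_r}$ exists, spans a $1$-dimensional complement of $J^{cusp}$ in $J := J_{k+\frac{r+1}{2},D_r}$ by Lemma~\ref{lem:dim_Eisenstein}, and must be accounted for when establishing that $J^{new}$ avoids the Eisenstein line.

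In the even case I would first verify that the hypothesis of Proposition~\ref{prop:J_old} is met: a direct check of $\epsilon_1 = -\left(\frac{-4}{r}\right)$ for odd $r\in\{1,3,5,7\}$ shows $\epsilon_1 = (-1)^{(r+1)/2}$, so $k+\frac{r+1}{2}$ even is exactly the condition $(-1)^k = \epsilon_1$. Proposition~\ref{prop:J_old} then gives an injective Hecke-equivariant map $\mathcal{I}\colon M_{2k}^{\epsilon_1}(1)\to J$ that sends elliptic Eisenstein series to Jacobi-Eisenstein series and sends $S_{2k}^{\epsilon_1}(1)$ into $J^{cusp}$. Since the space of Jacobi-Eisenstein series in $J$ is at most $1$-dimensional, $\mathcal{I}$ sends the elliptic Eisenstein series of weight $2k$ to a nonzero scalar multiple of $E$, so
\[
J^{old} \;=\; \C\, E \;\oplus\; \mathcal{I}\bigl(S_{2k}^{\epsilon_1}(1)\bigr), \qquad \mathcal{I}\bigl(S_{2k}^{\epsilon_1}(1)\bigr) \subset J^{cusp}.
\]
Now for $\phi\in J^{new}$ I would write $\phi = \alpha E + \psi$ with $\psi\in J^{cusp}$ and further decompose $\psi = \psi_1 + \psi_2$ orthogonally inside $J^{cusp}$, with $\psi_1 \in \mathcal{I}(S_{2k}^{\epsilon_1}(1))$ and $\psi_2$ in its Petersson complement. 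Using the orthogonality of the Jacobi-Eisenstein series against all Jacobi cusp forms (noted in the paper just after Theorem~\ref{thm:J_decom} via~\cite[Thm.~4.1]{Mocanu_JN}), $\psi_2$ is orthogonal to both summands of $J^{old}$, hence $\psi_2\in J^{new}$. Then $\phi = (\alpha E + \psi_1) + \psi_2$ splits $\phi$ into a $J^{old}$-part plus a $J^{new}$-part, and the direct sum $J = J^{old}\oplus J^{new}$ forces $\alpha E + \psi_1 = 0$. Since $E\notin J^{cusp}$ while $\psi_1\in J^{cusp}$, this yields $\alpha = 0$, and therefore $\phi = \psi\in J^{cusp}$.

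The main subtlety I expect is interpreting the phrase \emph{orthogonal complement of $J^{old}$ in $J$} consistently, because the Petersson inner product on Jacobi forms is only directly defined when at least one argument is cuspidal. The argument above avoids evaluating $\langle E,E\rangle$ entirely; it only uses orthogonality of $E$ against Jacobi cusp forms, which is the cleanly defined content. Assuming the paper's convention is the standard one in which $J^{new}$ is built from this cuspidal Petersson pairing extended so that $E$ is orthogonal to $J^{cusp}$, the argument goes through.
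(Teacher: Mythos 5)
Your proof is correct and follows essentially the same route as the paper: the odd-weight case is handled exactly as in the paper via Lemma~\ref{lem:dim_Eisenstein}, and in the even-weight case both arguments use Proposition~\ref{prop:J_old} to place the Jacobi-Eisenstein series $E_{k+\frac{r+1}{2},D_r}$ in $J_{k+\frac{r+1}{2},D_r}^{old}$ and then use Petersson orthogonality to force newforms to be cuspidal. The only difference is in the finishing step: where the paper asserts directly that a newform with nonzero constant term cannot be orthogonal to $E_{k+\frac{r+1}{2},D_r}$, you obtain the same conclusion through uniqueness in the decomposition $J_{k+\frac{r+1}{2},D_r}=J_{k+\frac{r+1}{2},D_r}^{old}\oplus J_{k+\frac{r+1}{2},D_r}^{new}$, which avoids pairing two non-cuspidal forms and is a slightly more careful rendering of the same idea.
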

\begin{proof}
If $k+\frac{r+1}{2}$ is odd, the previous result $J_{k+\frac{r+1}{2},D_r} = J_{k+\frac{r+1}{2},D_r}^{\text{cusp}}$
(see Lemma~\ref{lem:dim_Eisenstein}) 
immediately implies the inclusion $J_{k+\frac{r+1}{2},D_r}^{\text{new}} \subset J_{k+\frac{r+1}{2},D_r}^{\text{cusp}}$.

Next, assume that $k+\frac{r+1}{2}$ is even.
By Proposition~\ref{prop:J_old}, the space $\in J_{k+\frac{r+1}{2},D_r}^{\text{old}}$ 
contains the Jacobi--Eisenstein series $E_{k+\frac{r+1}{2},D_r}$.
Let $\phi \in J_{k+\frac{r+1}{2},D_r}^{\text{new}}$. 
If the constant term of $\phi$ were non-zero, then
$\phi$ could not be orthogonal to $E_{k+\frac{r+1}{2},D_r}$ with respect to the Petersson scalar product.
This would contradict the assumption that $\phi \in J_{k+\frac{r+1}{2},D_r}^{\text{new}}$.
Therefore, the constant term of $\phi$ must be zero, which implies $\phi \in J_{k+\frac{r+1}{2},D_r}^{\text{cusp}}$.
\end{proof}

\begin{lemma}\label{lem:E_2k_new}
  For $k \geq 2$, we have $M_{2k}^{\text{new}}(2) \subset S_{2k}(2)$.
  In particular, $S_{2k}^{\text{new}}(2) = M_{2k}^{\text{new}}(2)$ holds for $k \geq 2$.
\end{lemma}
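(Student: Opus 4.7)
The plan is to show that for $k\geq 2$ every Eisenstein series in $M_{2k}(2)$ is already an old form; once this is established the conclusion follows at once from the defining decomposition $M_{2k}(2) = M_{2k}^{old}(2) \oplus M_{2k}^{new}(2)$.

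First I would write $M_{2k}(2) = S_{2k}(2) \oplus \mathcal{E}_{2k}(2)$, where $\mathcal{E}_{2k}(2)$ is the Eisenstein subspace. The group $\Gamma_0(2)$ has exactly two inequivalent cusps ($\infty$ and $0$), so $\dim \mathcal{E}_{2k}(2) \leq 2$. Next I would exhibit two explicit elements of $\mathcal{E}_{2k}(2)$, namely
\begin{eqnarray*}
E_{2k}(\tau) \quad\text{and}\quad E_{2k}(2\tau),
\end{eqnarray*}
where $E_{2k}$ is the normalized Eisenstein series of weight $2k$ for $\mathrm{SL}_2(\Z)$. Because $k\geq 2$, we have $2k\geq 4$, so $E_{2k}\in M_{2k}(1)$ is a genuine holomorphic modular form (this is exactly where the hypothesis $k\geq 2$ enters; at weight $2$ the form $E_2$ is only quasimodular, and indeed $M_{2}^{new}(2)\neq\{0\}$). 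A short check of constant terms at the cusps $\infty$ and $0$ shows $E_{2k}(\tau)$ and $E_{2k}(2\tau)$ are linearly independent, hence they span $\mathcal{E}_{2k}(2)$.

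Both forms lie in the old subspace. Indeed, the two level-raising maps
\begin{eqnarray*}
\alpha_1,\alpha_2 \ :\ M_{2k}(1)\hookrightarrow M_{2k}(2), \qquad (\alpha_1 f)(\tau)=f(\tau),\ \ (\alpha_2 f)(\tau)=f(2\tau),
\end{eqnarray*}
have images contained in $M_{2k}^{old}(2)$ by the very definition of old forms at level $2$ coming from the divisor $1$ of $2$. Applying these to $E_{2k}\in M_{2k}(1)$ realizes $E_{2k}(\tau)$ and $E_{2k}(2\tau)$ as old forms, so $\mathcal{E}_{2k}(2)\subset M_{2k}^{old}(2)$.

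Finally, since $M_{2k}^{new}(2)\cap M_{2k}^{old}(2)=\{0\}$, we conclude $M_{2k}^{new}(2)\cap \mathcal{E}_{2k}(2)=\{0\}$, and therefore $M_{2k}^{new}(2)\subset S_{2k}(2)$, giving the stated equality $S_{2k}^{new}(2)=M_{2k}^{new}(2)$. The only subtle point is to make sure the usual newform decomposition is taken for the whole space $M_{2k}(2)$, not merely for $S_{2k}(2)$; but this is standard for $k\geq 2$ because the Eisenstein subspace itself is spanned by unambiguous old forms, so no orthogonality issues with Petersson products need to be invoked. There is no serious obstacle beyond the weight-$2$ exclusion noted above.
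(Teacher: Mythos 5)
Your overall strategy is the one the paper uses: for $k\ge 2$ the non-cuspidal part of $M_{2k}(2)$ is spanned by $E_{2k}(\tau)$ and $E_{2k}(2\tau)$, these are old forms coming from level $1$, hence the new part must sit inside $S_{2k}(2)$. The only real difference is in how the spanning statement is justified: the paper quotes $\dim M_{2k}(2)-\dim S_{2k}(2)=2$ and then checks by hand (constant term forces $v=-u$, and the Hecke bound on Fourier coefficients of cusp forms forces $u=v=0$) that no nonzero combination $uE_{2k}(\tau)+vE_{2k}(2\tau)$ is cuspidal, whereas you invoke the standard Eisenstein-subspace decomposition $M_{2k}(2)=S_{2k}(2)\oplus\mathcal{E}_{2k}(2)$ together with the fact that $\Gamma_0(2)$ has two cusps. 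Either justification is acceptable, and your weight-$2$ caveat is exactly why the paper separately sets $M_2^{new,\epsilon_2}(2):=M_2^{\epsilon_2}(2)$.

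There is, however, a genuine (if easily repaired) gap in your last step. From $\mathcal{E}_{2k}(2)\subset M_{2k}^{old}(2)$ and $M_{2k}^{new}(2)\cap M_{2k}^{old}(2)=\{0\}$ you infer $M_{2k}^{new}(2)\subset S_{2k}(2)$, but this is not a consequence of the direct sum decomposition alone: a vector-space complement of the old space can perfectly well contain elements of the form $f_{\mathrm{cusp}}+E$ with $0\neq E\in\mathcal{E}_{2k}(2)$, even though $\mathcal{E}_{2k}(2)$ is entirely old. So your parenthetical claim that no orthogonality needs to be invoked is precisely where the argument breaks: one must use the defining property of the new space, not merely that it is some complement of the old space. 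The repair is the paper's own closing step: take $f\in M_{2k}^{new}(2)$, write $f=f_S+f_E$ with $f_S\in S_{2k}(2)$ and $f_E\in\mathcal{E}_{2k}(2)\subset M_{2k}^{old}(2)$, and use the characterization of new forms as being orthogonal to old forms (equivalently, the trace/Atkin--Lehner description of the new space as in \cite{SZ}) to force $f_E=0$. With that one line added, your proof is complete and matches the paper's.
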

\begin{proof}
 It is well known that $\dim M_{2k}(2) - \dim S_{2k}(2) = 2$ for $k \geq 2$.
 Let $E_{2k}^{(1)}$ be the Eisenstein series of weight $2k$ with respect to $\text{SL}(2,\Z)$.
 The functions $E_{2k}^{(1)}(\tau)$ and $E_{2k}^{(1)}(2\tau)$ are linearly independent 
 and span the subspace of Eisenstein series in $M_{2k}(2)$.
 
 Consider a linear combination $f(\tau) := u E_{2k}^{(1)}(\tau) + v E_{2k}^{(1)}(2\tau)$ for $u, v \in \C$.
 If $f$ is a cusp form, its constant terms at all cusps must vanish.
 In particular, the vanishing of the constant term at the cusp $\infty$ implies $u+v = 0$, 
 or $v = -u$.
 For any odd prime $p$, the $p$-th Fourier coefficient of such an $f$ is proportional to $(1+p^{2k-1}) u$.
 Since $f$ is a cusp form, it must satisfy the Hecke bound, which implies $(1+p^{2k-1}) u = \mathcal{O}(p^k)$.
 This forces $u = v = 0$ since $2k-1 > k$ for $k \geq 2$.
 
 Thus, we have the decomposition $M_{2k}(2) = \left(\C E_{2k}^{(1)}(\tau) \oplus \C E_{2k}^{(1)}(2\tau)\right) \oplus S_{2k}(2)$.
 Since the subspace $\C E_{2k}^{(1)}(\tau) \oplus \C E_{2k}^{(1)}(2\tau)$ consists of oldforms,
 the space of newforms $M_{2k}^{\text{new}}(2)$ is orthogonal to this subspace with respect to the Petersson scalar product.
 We therefore conclude that $M_{2k}^{\text{new}}(2) \subset S_{2k}(2)$.
\end{proof}

\section{Jacobi forms of odd weight}\label{sec:JF_odd_weight}
In Sections \ref{sec:JF_odd_weight} and \ref{sec:JF_even_weight},
we treat the cases where the weight $k+\frac{r+1}{2}$ of Jacobi forms is odd and even, respectively.

In this section, we assume that $k + \frac{r+1}{2}$ is an odd integer.
The main goal of this section is to establish the following:
\begin{theorem}\label{thm:odd_weight_case}
For $r \in \{1,3,5,7\}$ and an odd integer $k + \frac{r+1}{2}$ with $k \geq 2$, we have
\begin{equation*}
  J_{k+\frac{r+1}{2},D_r}^{\text{cusp}} \cong \eta^{24-3r} M_{k-12+\frac{3r+1}{2}}(1) \ \cong \ S_{2k}^{\text{new}, \epsilon_2}(2)
\end{equation*}
as Hecke modules,
where $\epsilon_2 = -\left( \frac{-8}{r} \right)$, which equals $ -1$ if $r \in \{1, 3\}$ and $1$ if  $r \in \{  5, 7\}$.
We note that the twisted Hecke operators $\tilde{T}(p^2)$ act on $\eta^{24-3r} M_{k-12+\frac{3r+1}{2}}(1)$
$($see $\S$\ref{ss:Hecke_op}$)$.
\end{theorem}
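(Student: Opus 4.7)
Since $k+\frac{r+1}{2}$ is odd, Lemma~\ref{lem:dim_Eisenstein} gives $J_{k+\frac{r+1}{2},D_r} = J_{k+\frac{r+1}{2},D_r}^{cusp}$, and the theta decomposition of \S\ref{sec:decom} simplifies to
$$\phi(\tau,z) = h_1(\tau)\bigl(\theta_{r,1}(\tau,z) - \theta_{r,1}(\tau,-z)\bigr).$$
The plan is to show that the assignment $\Phi\colon\phi\mapsto h_1$ is a Hecke-equivariant isomorphism from $J_{k+\frac{r+1}{2},D_r}^{cusp}$ onto $\eta^{24-3r}M_{k-12+\frac{3r+1}{2}}(1)$, and then to compose with the Shimura-type correspondence of \cite[Thm.~2]{YY}.

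I would first compute the transformation law of $h_1$ using (\ref{id:theta_1}) and (\ref{id:theta_2}). Under $\tilde T$ the antisymmetric combination $\theta_{r,1}(\tau,z)-\theta_{r,1}(\tau,-z)$ is multiplied by $e(r/8)$, forcing $h_1$ to transform with $e(-r/8)$. A short calculation based on (\ref{id:theta_2}) together with the identity $\theta_{r,3}(\tau,z)=\theta_{r,1}(\tau,-z)$ (which follows from $v_3\equiv -v_1$ mod $D_r$) shows that the antisymmetric combination transforms under $\tilde S$ like a Jacobi theta of weight $r/2$ with a definite $24$-th root of unity, so $h_1(-1/\tau)$ equals a specific root of unity times $\tau^{k+1/2}h_1(\tau)$. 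Matching these multipliers with $\varepsilon(\tilde T)=e(1/24)$ and $\varepsilon(\tilde S)=e(-1/8)$ identifies $h_1$ as an element of $M_{k+\frac12}(1,\varepsilon^s)$ with $s$ the unique residue modulo $24$ satisfying $\varepsilon(\tilde T)^s=e(-r/8)$, namely $s=24-3r$. Lemma~\ref{lem:eta_M} then equates this space with $\eta^{24-3r}M_{k-12+\frac{3r+1}{2}}(1)$, and surjectivity of $\Phi$ follows by reversing the construction: for any $h$ in the right-hand side, $h(\tau)\bigl(\theta_{r,1}(\tau,z)-\theta_{r,1}(\tau,-z)\bigr)$ satisfies Definition~\ref{df:jacobi_lattice} by the same transformation computation.

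Next I would verify that $\Phi$ intertwines $T^J(p^2)$ on the Jacobi side with the twisted Hecke operator $\tilde T(p^2)$ of \S\ref{ss:Hecke_op} on the $\eta$-type side, for every odd prime $p$ (the hypothesis $p\nmid 2[D_r^\sharp/D_r]=8$ is automatic). This is a direct computation with Lemma~\ref{lem:hecke_fourier} in the odd-rank case $m=r$: after recognising the Fourier coefficients $c(n',v_1)$ of $\phi$ as the $q^{n'-r/8}$-coefficients of $h_1$, the three summands of the Hecke formula become respectively $c(p^2 n)$, a middle term whose product of Kronecker symbols simplifies to $\bigl(\tfrac{-4}{p}\bigr)\bigl(\tfrac{(-1)^k n}{p}\bigr)p^{k-1}$, and $p^{2k-1}c(n/p^2)$; their sum is exactly the definition of $\tilde T(p^2)$.

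Finally, composing $\Phi$ with the isomorphism $\eta^{24-3r}M_{k-12+\frac{3r+1}{2}}(1)\cong S_{2k}^{new,\epsilon_2}(2)$ of \cite[Thm.~2]{YY}, which intertwines $\tilde T(p^2)$ with $T(p)$ and selects the Atkin--Lehner sign $\epsilon_2=-\bigl(\tfrac{-8}{r}\bigr)$, yields the second isomorphism of the theorem. The most delicate step is the Hecke-compatibility check, where the $\bigl(\tfrac{-4}{p}\bigr)$ twist in $\tilde T(p^2)$ must be extracted carefully from the $\beta$-shift of the Jacobi Fourier index appearing in Lemma~\ref{lem:hecke_fourier}; once this is done, the sign $\epsilon_2$ is forced by \cite[Thm.~2]{YY}.
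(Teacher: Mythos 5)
Your proposal is correct and follows essentially the same route as the paper: the theta decomposition $\phi = h_1(\theta_{r,1}(\tau,z)-\theta_{r,1}(\tau,-z))$, the identification $h_1\in M_{k+\frac12}(1,\varepsilon^{24-3r})=\eta^{24-3r}M_{k-12+\frac{3r+1}{2}}(1)$ via the $\tilde T$, $\tilde S$ multipliers and Lemma~\ref{lem:eta_M}, surjectivity by reversing the construction, the Hecke comparison through Lemma~\ref{lem:hecke_fourier} with the $\left(\frac{-4}{p}\right)$ twist coming from $pv_1\equiv\left(\frac{-4}{p}\right)v_1 \bmod D_r$, and finally composition with \cite[Thm.~2]{YY} (the paper's Proposition~\ref{prop:J_M_odd} and Corollary~\ref{cor:odd_half_integral}). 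This matches the paper's argument in both structure and detail.
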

Theorem~\ref{thm:odd_weight_case} follows from 
Proposition~\ref{prop:J_M_odd} and 
Corollary~\ref{cor:odd_half_integral} (Theorem~\ref{thm:YY})
below.

For $\phi \in J_{k+\frac{r+1}{2},D_r}^{\text{cusp}}$,
we write 
\begin{equation}\label{id:phi_odd}
\phi(\tau,z) = h_1(\tau) (\theta_{r,1}(\tau,z) - \theta_{r,1}(\tau,-z))
\end{equation}
(see $\S$\ref{sec:decom} for the notation).
\begin{prop}\label{prop:J_M_odd}
 For $r \in \{1,3,5,7\}$, let $k+\frac{r+1}{2}$ be an odd integer. 
 Then the map defined by
 \begin{equation*}
 \mathcal{J}_{r,k}^{\text{odd}} \ : \   \phi(\tau,z) \mapsto h_1(\tau)
 \end{equation*}
 induces an isomorphism of Hecke modules
 \begin{equation*}
 \mathcal{J}_{r,k}^{\text{odd}} \ : \ J_{k+\frac{r+1}{2},D_r}^{\text{cusp}} 
\cong
  \eta^{3(8-r)} M_{k-12+\frac{3r + 1}{2}}(1) .
\end{equation*}
Namely, for any odd prime $p$, we have
\begin{equation}\label{id:J_odd_hecke}
  \mathcal{J}_{r,k}^{\text{odd}}(\phi | T^J(p)) = \left(\mathcal{J}_{r,k}^{\text{odd}}(\phi)\right) | \tilde{T}(p^2) .
\end{equation}
\end{prop}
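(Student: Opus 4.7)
The plan is to verify that the map $\mathcal{J}_{r,k}^{odd}$ is a linear bijection and that it intertwines the Hecke actions.

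First I would show that $h_1 \in \eta^{24-3r} M_{k-12+(3r+1)/2}(1)$. Write $\vartheta_r(\tau,z) := \theta_{r,1}(\tau,z) - \theta_{r,1}(\tau,-z)$. Using the identity $\theta_{r,1}(\tau,-z) = \theta_{r,3}(\tau,z)$ together with (\ref{id:theta_1}) and (\ref{id:theta_2}), the four-term sum in (\ref{id:theta_2}) collapses on this odd combination onto a single multiple of $\vartheta_r$ with an explicit $24$-th root of unity, yielding an explicit modular transformation law for $\vartheta_r$ of weight $r/2$ on $\mathrm{Mp}(2,\Z)$. Dividing the Jacobi transformation law of $\phi$ by this, $h_1$ must transform as a modular form of weight $k+\frac{1}{2}$ on $\mathrm{Mp}(2,\Z)$ with multiplier $\varepsilon^{24-3r}$. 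Since the Fourier expansion $h_1(\tau) = \sum_{n' \geq 1} c(n',v_1) q^{n'-r/8}$ begins at $q^{1-r/8}$, matching the leading power $q^{(24-3r)/24}$ of $\eta^{24-3r}$, Lemma~\ref{lem:eta_M} identifies $h_1$ with an element of $\eta^{24-3r} M_{k-12+(3r+1)/2}(1)$.

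Second, injectivity of $\mathcal{J}_{r,k}^{odd}$ is immediate from (\ref{id:phi_odd}). For surjectivity, given $h$ in the target, the function $\phi(\tau,z) := h(\tau)\vartheta_r(\tau,z)$ inherits condition (ii) of Definition~\ref{df:jacobi_lattice} directly from $\vartheta_r$ (which is a combination of $D_r$-lattice theta series), condition (i) by reversing the Step~1 computation, and condition (iii) from the $q$-expansion shape of $h$. Cuspidality is automatic, since in odd weight $J_{k+\frac{r+1}{2},D_r}^{cusp} = J_{k+\frac{r+1}{2},D_r}$ by Lemma~\ref{lem:dim_Eisenstein}. Hence $\mathcal{J}_{r,k}^{odd}(\phi) = h$.

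Third, for the Hecke-equivariance (\ref{id:J_odd_hecke}) I would compare Fourier coefficients using Lemma~\ref{lem:hecke_fourier}, specialized to $m = r$ and $[D_r^\sharp / D_r] = 4$. The three contributions to $c^*(n',v_1)$ match the three terms in the definition of $\tilde{T}(p^2)$: the $c(p^2 n', p v_1)$-term yields the $c(p^2 n)$-term (using $p v_1 \equiv v_1 \!\! \mod D_r$ when $p$ is odd); the middle term produces, after simplifying the factor $\left(\left(\frac{-1}{p}\right)p\right)^{(r+1)/2} \left(\frac{8}{p}\right)\left(\frac{n'-r/8}{p}\right)$ and using $\left(\frac{-4}{p}\right) = \left(\frac{-1}{p}\right)$, both the prefactor $\left(\frac{-4}{p}\right)$ of $\tilde T(p^2)$ and its middle term $p^{k-1} \left(\frac{(-1)^k n}{p}\right) c(n)$; and the $\lambda$-sum over $D_r/pD_r$ (which contains a single valid $\lambda$) reduces to the $p^{2k-1} c(n/p^2)$-term. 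The main obstacle is exactly this bookkeeping step: tracking all powers of $p$ and Kronecker symbols through Lemma~\ref{lem:hecke_fourier}, and in particular verifying that the extra prefactor $\left(\frac{-4}{p}\right)$ appearing in the definition of the \emph{twisted} Hecke operator $\tilde T(p^2)$ emerges cleanly from the identity $\left(\left(\frac{-1}{p}\right)p\right)^{(r+1)/2} = \left(\frac{-1}{p}\right)^{(r+1)/2} p^{(r+1)/2}$ combined with the weight shift from $k+\frac{r+1}{2}$ down to $k+\frac{1}{2}$.
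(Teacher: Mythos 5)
Your first two steps follow the paper's own route and are sound: the transformation law of $h_1$ extracted from (\ref{id:theta_1})--(\ref{id:theta_2}), the identification with $\eta^{24-3r}M_{k-12+\frac{3r+1}{2}}(1)$ via Lemma~\ref{lem:eta_M} (your remark on the leading exponent $q^{1-r/8}$ correctly supplies the holomorphy at the cusp), and the surjectivity argument with cuspidality coming from Lemma~\ref{lem:dim_Eisenstein}. The gap is in the Hecke-equivariance step. Your parenthetical claim that $p v_1 \equiv v_1 \bmod D_r$ for every odd prime $p$ is false: $(p-1)v_1 = \frac{p-1}{2}(e_1+\cdots+e_r)$ lies in $D_r$ only when $\frac{p-1}{2}r$ is even, i.e.\ only for $p\equiv 1 \bmod 4$; for $p\equiv 3\bmod 4$ one has $p v_1 \equiv v_3 \equiv -v_1 \bmod D_r$, so in general $p v_1 \equiv \left(\frac{-4}{p}\right) v_1 \bmod D_r$. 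Since the weight $k+\frac{r+1}{2}$ is odd, $C(n',-v_1)=-C(n',v_1)$, and this is precisely where the twist enters the first and third terms: writing $m=8n'-r$ and $A(m)=C(n',v_1)$, one gets $C(p^2 n', p v_1)=\left(\frac{-4}{p}\right)A(p^2 m)$, and the single surviving $\lambda$-summand satisfies $\frac1p(v_1-\lambda)\equiv\left(\frac{-4}{p}\right)v_1 \bmod D_r$, giving $\left(\frac{-4}{p}\right)p^{2k-1}A(m/p^2)$.

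As you set things up, the factor $\left(\frac{-4}{p}\right)$ is produced only by the middle term (your simplification there is correct: the middle Jacobi term does equal $\left(\frac{-4}{p}\right)\left(\frac{(-1)^k m}{p}\right)p^{k-1}A(m)$), while the first and third terms would come out untwisted. For $p\equiv 3\bmod 4$ your bookkeeping would therefore yield $A(p^2 m)+\left(\frac{-4}{p}\right)\left(\frac{(-1)^k m}{p}\right)p^{k-1}A(m)+p^{2k-1}A(m/p^2)$, which is not $\left(\frac{-4}{p}\right)$ times the whole bracket, so the identity (\ref{id:J_odd_hecke}) would not be verified. The paper's proof resolves exactly this point: the congruence $p v_1 \equiv \left(\frac{-4}{p}\right)v_1 \bmod D_r$ together with the odd-weight antisymmetry of the Fourier coefficients places the common factor $\left(\frac{-4}{p}\right)$ in front of all three terms of $C^*(n',v_1)$, matching the definition of $\tilde T(p^2)$. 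Once you replace your false congruence by this one (and carry the resulting sign through the first and third terms), the rest of your outline goes through.
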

\begin{proof}
 It follows from Lemma~\ref{lem:eta_M} that $\eta^{3(8-r)} M_{k-12+\frac{3r + 1}{2}}(1) = M_{k+\frac12}(1,\varepsilon^{24-3r})$.
 
 First, we show that $\mathcal{J}_{r,k}^{\text{odd}}(\phi) (= h_1)$ belongs to $M_{k+\frac12}(1,\varepsilon^{24-3r})$.
 By using the identities (\ref{id:phi_odd}), (\ref{id:theta_1}), (\ref{id:theta_2}) and the transformation formulas for $\phi$,
 we obtain
  \begin{equation*}
   h_1(\tau+1) = e(-r/8) h_1(\tau)
  \quad \text{ and } \quad
    h_1(-\tau^{-1}) = 
    \tau^{k+1/2} e(3r/8) h_1(\tau).
  \end{equation*}
  Since 
  $\varepsilon(\tilde{T}) = e(1/24)$ and $\varepsilon(\tilde{S}) = e(-1/8)$ (see $\S$\ref{ss:eta_type} for the notation), 
  these identities can be rewritten as
  \begin{align}
    h_1(\tau+1) &= 
    \varepsilon( \tilde{T} )^{24-3r} h_1(\tau), 
    \label{id:h_1_1}
    \\
    h_1(-\tau^{-1}) &= 
    \varepsilon(\tilde{S})^{24-3r} \sqrt{\tau}^{2k+1} h_1(\tau).
    \label{id:h_1_2}
  \end{align}
  Since $\tilde{T}$ and $\tilde{S}$ generates $\mbox{Mp}(2,\Z)$, 
  the fact that $h_1 \in M_{k+\frac12}(1,\varepsilon^{24 - 3 r})$ follows from (\ref{id:h_1_1}) and (\ref{id:h_1_2}).
  It is clear that the map $\mathcal{J}_{r,k}^{\text{odd}}$ is injective.
  
  Next, we shall show that $\mathcal{J}_{r,k}^{\text{odd}}$ is surjective.
  Let $h_1 \in M_{k+\frac12}(1,\varepsilon^{24-3r})$
  and define $\phi(\tau,z) = h_1(\tau) (\theta_{r,1}(\tau,z) - \theta_{r,1}(\tau,-z))$.
  From the transformation formulas for $h_1$ and $\theta_{r,j}$ ($j=0,1,2,3$)
  (see (\ref{id:h_1_1}), (\ref{id:h_1_2}), (\ref{id:theta_1}) and (\ref{id:theta_2})), 
   it follows that $\phi$ satisfies the transformation laws 
  (i) and (ii) in Definition~\ref{df:jacobi_lattice}
  for weight $k+\frac{r+1}{2}$ and index $D_r$.
  Since $\theta_{r,1}$ satisfies the condition  (iii) in Definition~\ref{df:jacobi_lattice},
  $\phi$ also satisfies this condition.
  Therefore, we have $\phi \in J_{k+\frac{r+1}{2},D_r}$. 
  By Lemma~\ref{lem:dim_Eisenstein}, we have
  $J_{k+\frac{r+1}{2},D_r} = J_{k+\frac{r+1}{2},D_r}^{\text{cusp}}$, which implies that
  $\mathcal{J}_{r,k}^{\text{odd}}$ is surjective.
  
  Finally, we prove the identity~(\ref{id:J_odd_hecke}).
  Consider the Fourier expansion of $\phi \in J_{k+\frac{r+1}{2},D_r}^{\text{cusp}}$:
  \begin{equation*}
    \phi(\tau,z) =  \sum_{n' \in \Z} \sum_{r' \in D_r^\sharp} C(n',r') e(n'\tau + \beta(r',z)) .
  \end{equation*}
  Then, $h_1(\tau) = (\mathcal{J}_{r,k}^{\text{odd}}(\phi))(\tau)$ is given by
  \begin{equation*}
    h_1(\tau) = \sum_{ n' \in \Z } C(n',v_1) e\!\left(\left(n'-\frac{r}{8}\right)\tau\right),
  \end{equation*}
  where $v_1 = \frac{e_1 + \cdots + e_r}{2}$ is defined in $\S$\ref{sec:decom}.
  Note that $\beta(v_1) = r/8$.
  
  For $n' \in \Z$, we set $A(8n' - r) := C(n',v_1)$. Then $h_1(\tau) = \sum_{m\in \Z} A(m) q^{m/8}$.

  Let $p$ be an odd prime.
  By the definition of $\tilde{T}(p^2)$ (see~$\S$\ref{ss:Hecke_op}), we have
  \begin{equation*}
    (h_1|_{k+\frac12} \tilde{T}(p^2))(\tau) = \sum_{m \in \Z} A^*(m) q^{m/8},
  \end{equation*}
  where
  \begin{equation}\label{id:A^*_Hecke}
    A^*(m) = \left( \frac{-4}{p} \right)
    \left\{ 
    A(p^2 m) + \left( \frac{(-1)^k m }{p}\right) p^{k-1} A(m) + p^{2k-1} A(m/p^2) 
    \right\}.
  \end{equation}
  Since $A(m) = 0$ unless  $m \equiv -r \mod 8$, it follows that $A^*(m)$ also vanishes unless $m \equiv -r \mod 8$.
  
  From the definition of $T^J(p)$ (see Lemma~\ref{lem:hecke_fourier}), 
  using the relations $\frac{r+1}{2} \equiv k +1 \mod 2$ and $[D_r^\sharp : D_r ] = 4$,
  we obtain
  \begin{equation*}
  (\phi |_{k+\frac12,D_r} T^J(p))(\tau,z)
  = \sum_{n' \in \Z} \sum_{r' \in D_r^\sharp} C^{*}(n',r') e(n'\tau + \beta(r',z)),
  \end{equation*}
  where the Fourier coefficients $C^{*}(n',r')$ are given by
  \begin{align}
   C^{*}(n',r') 
  &=
    C(p^{2}n', p r') 
    + p^{k-1}
       \left( \frac{(-1)^{k+1}}{p} \right)  
      \left( \frac{8(n'- \beta(r'))}{p} \right) 
          C(n',r') 
  \label{id:C^*}          
          \\
    & \quad
    + p^{2k-1}
    \sum_{\lambda \in D_r/ p D_r }C\!\left(\frac{1}{p^{2}}(n' - \beta(r',\lambda) + \beta(\lambda)), \frac{1}{p}(r' -  \lambda)\right).
  \notag
  \end{align}  

  Since $\phi \in J_{k+\frac{r+1}{2},D_r}^{\text{cusp}}$ and the weight $k+\frac{r+1}{2}$ is odd,
  it follows that $\phi(\tau,-z) = - \phi(\tau,z)$.
  This implies the relation $C(n',-v_1) = -C(n',v_1)$.
  We now show that $C^{*}(n',v_1) = A^*(8 n' - r)$ for all $n' \in \Z$.
  
  Note that $p v_1 \equiv \left( \frac{-4}{p} \right) v_1 \mod D_r$. We also recall that $\beta(v_1) = r/8$.

  Since $C(n',r')$ depends only on the pair ($n' - \beta(r')$, $r' \! \! \mod D_r$), we have
  \begin{align*}
   C(p^2 n', p v_1) &=
   C\!\left(p^2 n' + \frac{1-p^2}{8}r, \left( \frac{-4}{p} \right) v_1\right) 
   = \left( \frac{-4}{p} \right) C\!\left(p^2 n' + \frac{1-p^2}{8}r,  v_1\right) \\
  & =
  \left( \frac{-4}{p} \right)  A(p^2(8n' - r)).
  \end{align*}
  Similarly, if $\frac{1}{p}(v_1 - \mu) \in D_r^{\sharp}$ for some $\mu \in D_r$, 
  then 
  \begin{equation*}
   \frac{1}{p}(v_1 - \mu) \equiv \frac{p^2}{p}(v_1 - \mu)  \equiv  p v_1  \equiv  
    \left( \frac{-4}{p}\right) v_1 \mod D_r.
  \end{equation*}
  Furthermore, there exists a unique $\mu \in D_r/ p D_r$ such that $\frac{1}{p} (v_1 - \mu)  \in D_r^{\sharp}$. 
  For this unique $\mu$, we obtain
  \begin{align*}
   &
   \hspace{-1cm}
   \sum_{\lambda \in D_r/ p D_r }
   C\!\left(\frac{1}{p^{2}}(n' - \beta(v_1,\lambda) + \beta(\lambda)), \frac{1}{p}(v_1 -  \lambda)\right) \\
   &= 
   C\!\left(\frac{1}{p^{2}}(n' - \beta(v_1,\mu) + \beta(\mu)), \frac{1}{p}(v_1 -  \mu)\right) 
   \ = \
   C\!\left(\frac{8 n' - r + p^2 r }{8 p^2}, \left( \frac{-4}{p} \right) v_1\right) \\
   &=
   \left( \frac{-4}{p} \right) C\!\left(\frac{8 n' - r + p^2 r }{8 p^2},  v_1\right) 
   \ = \
   \left( \frac{-4}{p} \right) A\!\left(\frac{1}{p^2}(8n' - r)\right).
  \end{align*}
  Consequently, the identity~(\ref{id:C^*}) for $r' = v_1$ yields
  \begin{align*}
    C^{*}(n',v_1) 
    &= \left( \frac{-4}{p} \right) \left\{ A(p^2 (8n' - r)) \right. \\
   &
    \left. \qquad
        + p^{k-1}
       \left( \frac{(-1)^{k}}{p} \right)  
      \left( \frac{8n'- r}{p} \right) 
          A(8n' - r)
        +  p^{2k-1} A\!\left(\frac{1}{p^2}(8n' - r)\right)
    \right\} \\
   &=
   A^*(8n' - r).
  \end{align*}  

  Since we have
  \begin{equation*}
  \mathcal{J}_{r,k}^{\text{odd}}(\phi | T^J(p))(\tau) = \sum_{n' \in \Z} C^*(n',v_1) q^{(8n'-r)/8} 
  \end{equation*}
   and 
  \begin{equation*}
   ((\mathcal{J}_{r,k}^{\text{odd}}\phi) |\tilde{T}(p^2))(\tau) = \sum_{m \in \Z} A^*(m) q^{m/8} ,
  \end{equation*}
  the identity $C^{*}(n',v_1) = A^*(8n' - r)$ implies that 
  $\mathcal{J}_{r,k}^{\text{odd}}(\phi | T^J(p)) = \left(\mathcal{J}_{r,k}^{\text{odd}}(\phi)\right)| \tilde{T}(p^2)$.
   This completes the proof.
\end{proof}

\begin{theorem}[{\cite[Thm. 2]{YY}}]\label{thm:YY}
For $r \in \{1,3,5, 7\}$ and $s \in 2\Z$, we have the following isomorphism of Hecke modules:
 \begin{equation*}
   \eta^{3r} M_{s}(1) \cong
   \begin{cases}
   S_{3r+2s-1}^{\text{new},+}(2), & \text{if } r \in \{ 1,3\}, \\
   S_{3r+2s-1}^{\text{new},-}(2), & \text{if } r \in \{5,7\}.
   \end{cases}
 \end{equation*}
 Here, the Hecke action on the left-hand side is given by the twisted Hecke operators $\tilde{T}(p^2)$ of half-integral weight 
 (see $\S$\ref{ss:Hecke_op}),
 while the right-hand side is equipped with the usual Hecke operators of integral weight.
\end{theorem}
Note that the notation $S_{3r+2s-1}^{\text{new}}\!\left(2, - \left(\frac{8}{r} \right) \right)$
in \cite[Thm.2]{YY} is equivalent to
the notation $S_{3r+2s-1}^{\text{new},\epsilon}(2)$ in this paper, where
$\epsilon =  -\left(\frac{8}{r} \right) i^{3r+2s-1} =  \left( \frac{-8}{r}\right)$. 
Specifically, $\epsilon = 1$ if $r \in \{ 1,3\}$, and $\epsilon = -1$ if $r \in \{ 5,7 \}$.
While the usual Hecke operator $T(p^2)$ are used for $ \eta^{3r} M_{s}(1)$ in  \cite[Thm.2]{YY},
we employ the twisted Hecke operator $\tilde{T}(p^2)$. 
Consequently, the twisting factor $\otimes \left( \frac{-4}{ \cdot } \right)$ present in the result
of \cite[Thm.2]{YY} is omitted here.

It should also be noted that when $r = 1$ and $s = 0$, 
the right-hand side $S_2^{\text{new}}(2)$ of the isomorphism above should be replaced by $M_2^{\text{new}}(2) = \C E_2^{(2)}$.
Here, $E_2^{(2)}$ is the modular form of weight $2$ and level $2$ defined in Theorem~\ref{thm:k_1} 
(see $\S$\ref{ss:proof_thm_k_1} for details).

By substituting $(8-r,k-12+\frac{3r+1}{2})$ for $(r,s)$ in the isomorphism of 
Theorem~\ref{thm:YY}, we obtain the following corollary:
\begin{cor}\label{cor:odd_half_integral}
For $r \in \{1,3,5, 7\}$ and an odd integer $k + \frac{r+1}{2}$ with $k \geq 2$, we have
 \begin{equation*}
      \eta^{3(8-r)} M_{
   k-12+\frac{3r+1}{2}
  }(1)
   \cong 
   \begin{cases}
   S_{2k}^{\text{new},-}(2), & \text{if } r \in \{ 1, 3\}, \\
   S_{2k}^{\text{new},+}(2), & \text{if } r \in \{ 5, 7\},
   \end{cases}
 \end{equation*}
 as Hecke modules.
\end{cor}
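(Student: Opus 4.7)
The proof is a direct substitution into Theorem~\ref{thm:YY}, so the plan is essentially verification rather than invention. My plan is to substitute $(r,s) \mapsto (8-r,\, k-12+\tfrac{3r+1}{2})$ in Theorem~\ref{thm:YY} and check three things: that the substitution lands in the admissible range of parameters, that the weight on the right-hand side simplifies to $2k$, and that the signs $\pm$ come out as stated.

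First I would check admissibility. Since $r \in \{1,3,5,7\}$, we have $8-r \in \{7,5,3,1\}$, so the new ``$r$'' parameter lies in the required set. Next, the new ``$s$'' equals $s' := k-12+\tfrac{3r+1}{2}$, and one needs $s' \in 2\Z$. Writing $s' = (k+\tfrac{r+1}{2}) + r - 12$, and using that $k+\tfrac{r+1}{2}$ is odd by hypothesis while $r$ is odd and $12$ is even, we see $s'$ is the sum of two odd integers and an even integer, hence even. A quick case check ($r=1,3,5,7$ against the parity of $k$ forced by $k+\tfrac{r+1}{2}$ odd) confirms this.

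Second I would compute the weight. Theorem~\ref{thm:YY} produces new cusp forms of weight $3r+2s-1$; after substitution this becomes
\begin{eqnarray*}
 3(8-r) + 2\!\left(k-12+\tfrac{3r+1}{2}\right) - 1
 &=& 24 - 3r + 2k - 24 + 3r + 1 - 1 \ =\ 2k,
\end{eqnarray*}
matching the corollary. For the sign, Theorem~\ref{thm:YY} gives $S^{new,+}_{\bullet}(2)$ for its $r\in\{1,3\}$ and $S^{new,-}_{\bullet}(2)$ for its $r\in\{5,7\}$; after $r \mapsto 8-r$, the cases $r\in\{5,7\}$ of the corollary correspond to $8-r\in\{3,1\}$, yielding $S^{new,+}_{2k}(2)$, while $r\in\{1,3\}$ correspond to $8-r\in\{7,5\}$, yielding $S^{new,-}_{2k}(2)$, exactly as claimed.

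There is no real obstacle here: Theorem~\ref{thm:YY} does all the work, and the only nontrivial point is the parity verification $s'\in 2\Z$, which is immediate from the hypothesis that $k+\tfrac{r+1}{2}$ is odd. The Hecke-module structure is transported along the substitution because Theorem~\ref{thm:YY} is a Hecke-module isomorphism using the twisted operators $\tilde T(p^2)$ on the left and the usual $T(p)$ on the right, and the substitution affects only the exponent of $\eta$ and the weights, not the Hecke action.
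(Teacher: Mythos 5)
Your proposal is correct and follows exactly the paper's own route: the paper derives the corollary by the same substitution $(r,s)\mapsto(8-r,\,k-12+\tfrac{3r+1}{2})$ into Theorem~\ref{thm:YY}, and your parity, weight, and sign checks are all accurate. No further comment is needed.
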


We can now prove Theorem~\ref{thm:odd_weight_case}.
\begin{proof}[Proof of Theorem~\ref{thm:odd_weight_case}]
Theorem~\ref{thm:odd_weight_case} follows immediately from 
Proposition~\ref{prop:J_M_odd}
and Corollary~\ref{cor:odd_half_integral} (Theorem~\ref{thm:YY}).
\end{proof}

\section{Jacobi forms of even weight}\label{sec:JF_even_weight}
In this section, we assume that $r \in \{1,3,5,7 \}$ and that the weight $k + \frac{r+1}{2}$ is an even integer.
Let the notation be as in\ $\S$\ref{sec:decom}.
The main objective of this section is to prove Theorem~\ref{thm:even_weight_case}.
\subsection{Jacobi forms of even weight of index $D_r$}\label{ss:Jacobi_forms_even_wt_main}
The vector space $S_{k+\frac12}^{+,-r}(8)$ was defined in~$\S$\ref{ss:half_int_theta}
(see $\S$\ref{ss:half_int_theta} for the notation).

\begin{theorem}\label{thm:even_weight_case}
Let $k+\frac{r+1}{2}$ be an even integer with $k \geq 0$. 
Then, we have the following isomorphisms of Hecke modules:
\begin{equation}\label{id:J_S_S_even}
  J_{k+\frac{r+1}{2},D_r}^{\text{cusp, new}} \cong S_{k+\frac12}^{\text{new},+,-r}(8)  \cong  S_{2k}^{\text{new}, \epsilon_2}(2),
\end{equation}
where $\epsilon_2 = -\left( \frac{-8}{r} \right)$, which equals $-1$  if $r \in \{1,3 \}$ and $1$ if $r \in \{ 5, 7 \}$.
Here, $S_{k+\frac12}^{\text{new},+,-r}(8)$ is the subspace of all newforms in $S_{k+\frac12}^{+,-r}(8)$,
which will be formally defined before Theorem~\ref{thm:UY}.
\end{theorem}
The first isomorphism in (\ref{id:J_S_S_even}) follows from Corollary~\ref{cor:J_S_half}
of Proposition~\ref{prop:J_M_even}, 
while the second isomorphism is essentially obtained from \cite[Thm. 1]{UY} (see Theorem~\ref{thm:UY} below).

\subsection{Jacobi forms and modular forms of half-integral weight}
\begin{lemma}\label{lem:g08}
  The group $\Gamma_0(8)$ is generated by the following two types of matrices:
  \begin{equation*}
  u(s) := \begin{pmatrix} 1 & 0 \\ 8s & 1\end{pmatrix} \ (s \in \Z), \quad
  v(a) \ := \ \begin{pmatrix} a & 1\\ a^2-1 & a \end{pmatrix} \ (a \in \{\pm 1, \pm 3\}).
  \end{equation*}
\end{lemma}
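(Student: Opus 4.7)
The plan is to establish this by a Euclidean-algorithm-style descent on $|c|$, showing by induction that every $g = \begin{pmatrix} a & b \\ c & d \end{pmatrix} \in \Gamma_0(8)$ lies in the subgroup $\Gamma$ generated by the listed matrices.

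First I would collect preliminaries: all listed matrices belong to $\Gamma_0(8)$ since $(\pm 1)^2 - 1 = 0$ and $(\pm 3)^2 - 1 = 8$. Setting $T := v(1)$, we have $T \in \Gamma$ and $-I = v(-1) v(1) \in \Gamma$, so $\pm T^n \in \Gamma$ for every $n \in \Z$; likewise $u(s) = u(1)^s \in \Gamma$ for every $s \in \Z$. The base case $c = 0$ forces $a = d = \pm 1$ and $g = \pm T^b \in \Gamma$.

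For the inductive step with $|c| \geq 8$, after multiplying by $-I$ if necessary I take $c > 0$. Left multiplication by $T^n$ replaces $a$ by $a + nc$, so I may normalize $|a| \leq c/2$. Since $\det g = 1$ gives $\gcd(a, c) = 1$ and $8 \mid c$ makes $c/4$ and $c/2$ even while $a$ must be odd, the normalized $a$ avoids $0, \pm c/4, \pm c/2$ and therefore lies in one of the four open intervals $(-c/2, -c/4)$, $(-c/4, 0)$, $(0, c/4)$, $(c/4, c/2)$. If $a \in (-c/2, -c/4)$, then left multiplication by $v(3)$ produces new bottom-left entry $8a + 3c$, and the inequality $-c < 8a + 3c < c$ is immediate from the bounds on $a$; symmetrically, if $a \in (c/4, c/2)$, left multiplication by $v(-3)$ produces $8a - 3c$ with $|8a - 3c| < c$. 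If $|a| < c/4$, then choosing $s$ to be the nearest integer to $-c/(8a)$ yields $|c + 8sa| \leq 4|a| < c$, where $c + 8sa$ is the new bottom-left entry after left multiplication by $u(s)$. In every case $|c|$ decreases strictly, completing the induction.

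The main obstacle here is the case analysis in the reduction step, but it turns out to be mild: the arithmetic constraints $\gcd(a, c) = 1$ and $8 \mid c$ force the odd integer $a$ to avoid the even values $\pm c/4$ and $\pm c/2$ where the reduction would fail, so the four open intervals together with the normalization $|a| \leq c/2$ cover all possibilities without gap. A more conceptual alternative would be to run the Reidemeister--Schreier algorithm on the index-$12$ inclusion $\Gamma_0(8) \subset \mathrm{SL}_2(\Z)$ to extract a standard generating set and then rewrite each of its elements as a word in $u(s)$ and $v(a)$, but the direct descent above is shorter.
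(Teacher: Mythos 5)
Your descent is correct: the normalization $|a|\le c/2$ by powers of $T=v(1)$, the parity observation that $\gcd(a,c)=1$ and $8\mid c$ force the odd entry $a$ away from $0,\pm c/4,\pm c/2$, and the three reduction moves ($v(3)$ on $(-c/2,-c/4)$, $v(-3)$ on $(c/4,c/2)$, $u(s)$ with $s$ nearest to $-c/(8a)$ when $|a|<c/4$) all strictly decrease the lower-left entry, and the base case $c=0$ gives $\pm T^b$. The paper runs the Euclidean descent differently: it first proves the conjugation identity $u(s)\,v(a)\,u(s)=v(a+8s)$, so that $v(a)$ lies in the subgroup for \emph{every} odd $a$, and then reduces the \emph{upper-right} entry $y$ of $A=\left(\begin{smallmatrix} x & y\\ 8z & w\end{smallmatrix}\right)$ by a single type of move, choosing an odd $a$ with $|ay+w|<|y|$ (possible because $w$ is odd and coprime to $y$), until $y=0$ and $A=\left(\begin{smallmatrix}\pm1 & 0\\ 8z & \pm1\end{smallmatrix}\right)$. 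So the paper's key lemma (the enlarged family $v(a)$, $a$ odd) buys a uniform one-line reduction step with no case analysis, while your argument stays with the four listed $v(a)$'s and compensates with the interval case analysis and the extra use of $u(s)$; both are elementary and of comparable length, and either would serve the paper's purpose.
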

\begin{proof}
Let $\Gamma'$ denote the group generated by the matrices above.
A direct calculation shows that
$u(s) v(a) u(s) = v(a+8s)$.
This implies that  $v(a) \in \Gamma'$ for any odd integer $a$.
Note that $ \left(\begin{smallmatrix} - 1 & 0 \\ 0 & - 1\end{smallmatrix} \right) = v(1) v(-1) \in \Gamma'$ 
and $v(a)^{-1} = - v(-a) \in \Gamma'$.

Take $A = \begin{pmatrix} x & y \\ 8z & w \end{pmatrix} \in \Gamma_0(8)$.
We can find an odd integer $a$ such that $|ay + w| < |y|$.
Since $ v(a) A =  \begin{pmatrix} * & a y + w \\ * &  * \end{pmatrix}$, 
by repeatedly multiplying $A$ by such matrices $v(a)$, we can reduce the absolute value of 
the upper-right entry.
The reduction allows us to assume, with loss of generality, that $y = 0$.
In this case, $A$ must be of the form $\pm \left(\begin{smallmatrix} 1 & 0 \\ 8z & 1\end{smallmatrix} \right)$, 
which belongs to $\Gamma'$.
Thus, we conclude that $\Gamma' = \Gamma_0(8)$.
\end{proof}

Recall from $\S$\ref{sec:decom} that any $\phi \in J_{k+\frac{r+1}{2}, D_r}$ has a  decomposition 
of the form:
\begin{equation}\label{id:phi_theta_decom_even}
  \phi(\tau,z) = \sum_{j=0}^3 h_j(\tau) \theta_{r,j}(\tau,z), 
\end{equation}
where the functions $h_j(\tau)$ are given by
\begin{equation*}
  h_j(\tau) = \sum_{n' \in \Z} C(n',v_j) e((n' - \beta(v_j))\tau).
\end{equation*}
Here, $C(n',v_j)$ denotes the $(n',v_j)$-th Fourier coefficient of $\phi$ (see Lemma~\ref{lem:theta_decom}).
We then define the map $\mathcal{J}_{r,k}^{\text{even}}$ by 
\begin{equation}\label{id:J_r_k_even_def}
  (\mathcal{J}_{r,k}^{\text{even}}(\phi))(\tau) := \sum_{j = 0}^3 h_j(8\tau).
\end{equation}

Recall that the vector space $M_{k+\frac12}^{+,-r}(8)$ was defined in~$\S$\ref{ss:half_int_theta}.
\begin{prop}\label{prop:J_M_even}
Assume that $r \in \{ 1, 3, 5, 7\}$ and that
$k+\frac{r+1}{2}$ is an even integer with $k \geq 0$.
Then, the map $\mathcal{J}_{r,k}^{\text{even}}$
 induces an isomorphism of Hecke modules  
 \begin{equation*}
  \mathcal{J}_{r,k}^{\text{even}} \ : \ J_{k+\frac{r+1}{2},D_r} 
   \xrightarrow{\cong}  
 M_{k+\frac12}^{+, -r}(8).
 \end{equation*}
 Namely, for any $\phi \in J_{k+\frac{r+1}{2}, D_r}$ and for any odd prime $p$, we have
 \begin{equation*}
 \mathcal{J}_{r,k}^{\text{even}}(\phi|T^J(p)) = 
   (\mathcal{J}_{r,k}^{\text{even}}(\phi))|T(p^2) .
    \end{equation*}
  Furthermore, the restriction of $\mathcal{J}_{r,k}^{\text{even}}$ to $J_{k+\frac{r+1}{2},D_r}^{\text{cusp}}$
  yields the isomorphism
 \begin{equation*}
  \mathcal{J}_{r,k}^{\text{even}} \ : \ J_{k+\frac{r+1}{2},D_r}^{\text{cusp}} 
  \xrightarrow{\cong}  
 S_{k+\frac12}^{+, -r}(8) .
 \end{equation*}  
\end{prop}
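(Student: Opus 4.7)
The plan is to analyze $\mathcal{J}_{r,k}^{even}$ by treating the theta components $h_j$ in the decomposition (\ref{id:phi_theta_decom_even}) as a vector-valued modular form for the Weil representation of the discriminant module $D_{D_r} \cong (\Z/4\Z,\, x \mapsto rx^2/8)$. From (\ref{id:theta_1}), (\ref{id:theta_2}) and the transformation laws in Definition~\ref{df:jacobi_lattice} applied to $\phi$, one derives
\begin{align*}
 h_j(\tau+1) &= e(-j^2 r/8)\, h_j(\tau), \\
 h_j(-1/\tau) &= \frac{e(r/8)}{2}\, \tau^{k+1/2} \sum_{i=0}^{3} e(ijr/4)\, h_i(\tau).
\end{align*}
Because $r$ is odd, the level of this representation is exactly $8$ (the smallest $N$ with $N\beta(v_j) \in \Z$ for all $j$), which is precisely why the definition uses $\tau \mapsto 8\tau$.

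To prove $H(\tau) := \mathcal{J}_{r,k}^{even}(\phi)(\tau) \in M_{k+\frac12}^{+,-r}(8)$, I would separately verify the Fourier-coefficient condition and the $\Gamma_0(8)$-transformation. The first is immediate: using the parity relation $h_1 = h_3$ valid in the even-weight case (see \S\ref{sec:decom}), the three summands $h_0(8\tau),\ h_2(8\tau),\ 2 h_1(8\tau)$ have $q$-exponents in the classes $0,\ 4,\ -r \bmod 8$, respectively. For the second, Lemma~\ref{lem:g08} reduces the problem to invariance under $u(s)$ and $v(a)$. Substituting $\sigma = 8\tau$ yields $8 u(s)\tau = S T^{-s} S^{-1} \sigma$, and iterating the two transformation laws above collapses the resulting double sum through the orthogonality identity $\sum_{j=0}^{3} e(ijr/4) = 4\, \delta_{i,0}$ (valid because $r$ is odd), producing $H(u(s)\tau) = j(u(s),\tau)^{2k+1}\, H(\tau)$. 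The verification at each element $v(a)$ is analogous: one factors $v(a)$ into $S$ and powers of $T$, applies the transformation rules iteratively, and uses the same character-orthogonality identity together with careful bookkeeping of the metaplectic cocycle and of the branch of the square root in each parity case for $(r,k)$.

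Injectivity is immediate, since $h_0(8\tau),\ h_2(8\tau),\ 2 h_1(8\tau)$ are supported on disjoint residue classes $\bmod 8$, so $H \equiv 0$ forces $h_0 = h_1 = h_2 = h_3 = 0$ and hence $\phi = 0$. For surjectivity, given $H \in M_{k+\frac12}^{+,-r}(8)$ I would split the Fourier expansion of $H$ according to residue $\bmod 8$ to define candidates $h_0, h_2, h_1 (= h_3)$, and assemble $\phi := \sum_{j} h_j\, \theta_{r,j}$. Conditions (ii)--(iii) of Definition~\ref{df:jacobi_lattice} then hold by construction, while condition (i) is forced by reversing the modularity computation: the $\Gamma_0(8)$-modularity of $H$ is equivalent, via inversion of the character sum, to the $\mathrm{SL}(2,\Z)$-transformation of the vector $(h_j)$, which combined with (\ref{id:theta_1})--(\ref{id:theta_2}) recovers the Jacobi transformation of $\phi$. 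The cusp-form correspondence follows because the constant term of $H$ coincides with the $(0, v_0)$-Fourier coefficient of $\phi$, whose vanishing is equivalent to $\phi \in J_{k+(r+1)/2, D_r}^{cusp}$ by Lemma~\ref{lem:dim_Eisenstein}.

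For the Hecke equivariance, I would compare Fourier coefficients directly. Applying Lemma~\ref{lem:hecke_fourier} with $m = r$ odd and $[D_r^\sharp : D_r] = 4$, the $(n', v_j)$-th Fourier coefficient of $\phi\,|\,T^J(p)$ is a linear combination of $C(p^2 n', p v_j)$, $C(n', v_j)$ and $C(n'/p^2, v_j/p)$; translating via $N = 8(n' - \beta(v_j))$ into the corresponding Fourier coefficient of $H$ produces exactly the defining recursion of the Hecke operator $T(p^2)$ on $M_{k+\frac12}(8)$. I expect the main obstacle to be the verification of the $\Gamma_0(8)$-transformation at the generators $v(a)$, where the branches of the square root and the metaplectic cocycle must be tracked across all four parity cases for $(r,k)$; once this is dispatched, the remaining pieces are essentially formal.
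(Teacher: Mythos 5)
Your plan follows essentially the same route as the paper: theta-decompose $\phi$, reduce the $\Gamma_0(8)$-modularity of $\sum_j h_j(8\tau)$ to the generators of Lemma~\ref{lem:g08} via factorizations into $S$ and $T$ and the orthogonality $\sum_{j=0}^3 e(ijr/4)=4\delta_{i,0}$, get injectivity from the disjoint residue classes mod $8$, get surjectivity by splitting the $q$-expansion of $g$ mod $8$ and transporting its $\Gamma_0(8)$-invariance back to the transformation law of the vector $(h_j)$, and prove Hecke equivariance by comparing Fourier coefficients through Lemma~\ref{lem:hecke_fourier} — exactly the content of the paper's Lemmas~\ref{lem:J_even_inj}, \ref{lem:J_even_surj} and \ref{lem:J_even_hecke}. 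The only point stated a bit too quickly is that cuspidality of $\phi$ yields cuspidality of $\mathcal{J}_{r,k}^{even}(\phi)$: vanishing of the constant term at $\infty$ alone does not settle the other cusps of $\Gamma_0(8)$, and the paper supplies the missing decay there from the transformation formulas of the $h_j$ (boundedness of $|\mathrm{Im}(\tau)^{-k/2-1/4} g(\tau)|$), which in your vector-valued framing is likewise immediate once all components have positive $q$-exponents.
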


Proposition~\ref{prop:J_M_even} follows from the following three lemmas, 
which establish that $\mathcal{J}_{r,k}^{\text{even}}$ is injective  (Lemma~\ref{lem:J_even_inj}),
surjective (Lemma~\ref{lem:J_even_surj}),
and compatible with the action of Hecke operators (Lemma~\ref{lem:J_even_hecke}). 
In what follows, we provide the proofs of these lemmas.

\begin{lemma}\label{lem:J_even_inj}
 For any $\phi \in J_{k+\frac{r+1}{2},D_r}$, we have $\mathcal{J}_{r,k}^{\text{even}}(\phi) \in M_{k+\frac12}^{+, -r}(8)$.
 Moreover, the map $\mathcal{J}_{r,k}^{\text{even}} \, : \, J_{k+\frac{r+1}{2},D_r} \rightarrow M_{k+\frac12}^{+, -r}(8)$ is injective.
 Furthermore, if $\phi \in J_{k+\frac{r+1}{2},D_r}^{\text{cusp}}$, then $\mathcal{J}_{r,k}^{\text{even}}(\phi) \in S_{k+\frac12}^{+,-r}(8)$.
\end{lemma}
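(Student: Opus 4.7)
The plan is to verify four properties of $g := \mathcal{J}_{r,k}^{even}(\phi) = \sum_{j=0}^3 h_j(8\tau)$: the support of its Fourier coefficients, modularity under $\Gamma_0(8)$, injectivity of $\mathcal{J}_{r,k}^{even}$, and cuspidality of $g$ when $\phi$ is cuspidal. The Fourier-coefficient support is a direct computation: since $h_j(\tau) = \sum_{n'} C(n',v_j) e((n'-\beta(v_j))\tau)$, the function $h_j(8\tau)$ has non-zero Fourier coefficients only at exponents $8(n'-\beta(v_j))$, which modulo $8$ are $0, 4, -r, -r$ for $j=0,2,1,3$ respectively (using $\beta(v_0)=0$, $\beta(v_2)=\tfrac12$, $\beta(v_1)=\beta(v_3)=r/8$). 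Hence $g$ has Fourier coefficients supported on $n\equiv 0,4,-r\pmod 8$, and holomorphy at $\infty$ follows from $n'-\beta(v_j)\geq 0$.

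For modularity, the key observation is that for $A=\bigl(\begin{smallmatrix} a & b \\ c & d\end{smallmatrix}\bigr)\in\Gamma_0(8)$, the substitution $\tau'=8\tau$ gives $8A\tau = \tilde A\tau'$ with $\tilde A = \bigl(\begin{smallmatrix} a & 8b \\ c/8 & d\end{smallmatrix}\bigr) \in SL_2(\Z)$, so checking $g|_{k+1/2}A = g$ on the generators from Lemma~\ref{lem:g08} reduces to an $SL_2(\Z)$-calculation on $\sum_j h_j(\tau')$. The $SL_2(\Z)$-transformation of the vector $(h_0, h_1, h_2, h_3)$ is obtained by substituting the theta decomposition into $\phi|_A = \phi$ and inverting using~(\ref{id:theta_1}) and~(\ref{id:theta_2}): under $T$, $h_j$ is multiplied by $e(-j^2 r/8)$; under $S$, the vector is multiplied by $\tfrac12 \tau^{-r/2} e(r/8)$ times the matrix $(e(ijr/4))_{i,j}$, essentially the contragredient Weil representation of the discriminant module of $D_r$. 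Decomposing each $\tilde u(s)$ and $\tilde v(a)$ as a word in $T$ and $S$ (e.g.\ via the continued-fraction algorithm) and multiplying out, one then verifies that the automorphy factor acting on $\sum_j h_j$ equals the theta multiplier $j(A,\tau)^{2k+1}$.

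Injectivity follows by separating Fourier coefficients by residue class modulo $8$: if $g\equiv 0$, then $h_0 = h_2 = 0$ and $h_1 + h_3 = 0$. Since $k+\tfrac{r+1}{2}$ is even, $\phi(\tau,-z) = \phi(\tau,z)$; combined with $\theta_{r,3}(\tau,z) = \theta_{r,1}(\tau,-z)$ (a consequence of $v_3\equiv -v_1 \pmod{D_r}$), this forces $h_1 = h_3$, hence $h_1=h_3=0$ and $\phi = 0$. For cuspidality, Lemma~\ref{lem:dim_Eisenstein} says $\phi\in J^{cusp}$ is equivalent to vanishing of the constant term of $h_0$, so $g$ then vanishes at $\infty$; vanishing at each of the other three cusps of $\Gamma_0(8)$ follows from the modularity proved above, since for $\gamma\in SL_2(\Z)$ sending $\infty$ to the cusp, $g|_{k+1/2}\gamma$ is expressible via the theta transformation as a linear combination of shifted $h_j$'s, each cuspidal at $\infty$.

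The main obstacle I expect is the bookkeeping in the modularity step under the generators $v(a)$: conceptually this is the standard Eichler--Zagier style calculation, but tracking the eighth roots of unity coming from repeated applications of~(\ref{id:theta_2}) and matching them against $j(v(a),\tau)^{2k+1}$ requires a careful case analysis across $a\in\{\pm 1,\pm 3\}$ and $r\in\{1,3,5,7\}$.
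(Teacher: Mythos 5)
Your proposal is correct and follows essentially the same route as the paper: reduce the $\Gamma_0(8)$-modularity of $g(\tau)=\sum_j h_j(8\tau)$ to an $\mathrm{SL}_2(\Z)$-computation on $\sum_j h_j$ via the conjugated generators of Lemma~\ref{lem:g08}, use the theta transformation formulas (\ref{id:theta_1})--(\ref{id:theta_2}) to track the vector $(h_j)$ and match the resulting eighth roots of unity against the theta multiplier, read off the plus-space condition from $\beta(v_0)=0$, $\beta(v_2)=\tfrac12$, $\beta(v_1)=\beta(v_3)=\tfrac r8$, get injectivity from separating residue classes mod $8$ together with $h_1=h_3$, and deduce cuspidality from the vanishing constant term of $h_0$ plus the transformation behavior at the remaining cusps (the paper phrases this last point as boundedness of the suitably weighted $|g|$, which is equivalent). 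The explicit root-of-unity bookkeeping you flag as the main obstacle is exactly what the paper carries out, using the identities $\sum_{j}e(jtr/4)=4\delta_{t,0}$ and $\sum_j e(-aj^2r/8)e(jtr/4)e(-at^2r/8)=2e(-ar/8)$ together with $2k+1\equiv -r \bmod 4$.
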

\begin{proof}
 For $\phi \in J_{k+\frac{r+1}{2},D_r}$, let $g = \mathcal{J}_{r,k}^{\text{even}}(\phi)$.
 Since $\beta(v_0) = 0$, $\beta(v_2) = 1/2$ and $\beta(v_1) = \beta(v_3) = r/8$,
 and noting that $h_1 = h_3$, the injectivity of $\mathcal{J}_{r,k}^{\text{even}}$ follows directly from the Fourier expansion of $g$.

We now show that $g \in M_{k+\frac12}^{+,-r}(8)$.
By Lemma~\ref{lem:g08}, it suffices to verify the transformation laws of $g$ for
 the generators $u(s)$ and $v(a)$ of $\Gamma_0(8)$.
To this end, we consider the transformation property of $g(\tau/8) = \sum_{j = 0}^3 h_j(\tau) $ under the actions of 
 \begin{equation*}
 u'(s)  := \begin{pmatrix} 8 & 0 \\ 0 & 1 \end{pmatrix} u(s) \begin{pmatrix} 8 & 0 \\ 0 & 1 \end{pmatrix}^{-1}  = \begin{pmatrix} 1 & 0 \\  s & 1\end{pmatrix} \quad
 (s \in \Z)
 \end{equation*}
 and 
\begin{equation*}
v'(a)  :=  \begin{pmatrix} 8 & 0 \\  0 & 1\end{pmatrix} v(a)
 \begin{pmatrix} 8 & 0 \\  0 & 1\end{pmatrix}^{-1}
 =  \begin{pmatrix} a & 8 \\ (a^2-1)/8 & a \end{pmatrix} \quad (a \in \{\pm 1, \pm 3\}).
\end{equation*}

Note the identity
$  u'(s) = \begin{pmatrix} 0 & -1 \\ 1 & 0 \end{pmatrix} 
  \begin{pmatrix} 1 & -s \\ 0 & 1 \end{pmatrix} 
  \begin{pmatrix} 0 & 1 \\ -1 & 0 \end{pmatrix} $.
By virtue of the identities (\ref{id:theta_2}) and (\ref{id:phi_theta_decom_even}), and the transformation formula for $\phi$, we have
\begin{equation*}
  h_j(-\tau^{-1}) = \frac12 \tau^{k + \frac12} e(r/8) \sum_{t = 0}^3  e(j t r /4) h_t(\tau).
\end{equation*}
For any $s \in \Z$, it follows that
\begin{align*}
  h_j(-(\tau-s)^{-1}) &= \frac12 \left(\tau-s\right)^{k + \frac12} e(r/8) \sum_{t=0}^3 e(j t r /4) h_t(\tau-s)\\
  &= \frac12 \left(\tau-s\right)^{k + \frac12} e(r/8) \sum_{t=0}^3 e(j t r /4) e(t^2 r s /8) h_t(\tau).
\end{align*}
Then, applying (\ref{id:theta_2}) again, we obtain
\begin{align*}
  h_j(-(-\tau^{-1}-s)^{-1}) 
  &= \frac12 \left(-\tau^{-1}-s\right)^{k + \frac12} e(r/8) \sum_{t=0}^3 e(j t r /4) e(t^2 r s /8) h_t(-\tau^{-1}) \\
  &=
  \frac14 \left( -\tau^{-1}-s\right)^{k + \frac12} 
   \tau ^{k + \frac12} e(r/4)
  \sum_{t=0}^3 e(j t r /4)    e(t^2 r s /8) \\
  & \quad
  \times
  \sum_{u=0}^3
   e(t u r / 4) 
   h_u(\tau).
\end{align*}
Since 
$\left(-\tau^{-1}-s\right)^{\frac{1}{2}} \tau^{\frac{1}{2}} = (s\tau + 1)^{\frac12} e(1/4)$
and since $2k + 1 \equiv -r \mod 4$, we have
$\left( -\tau^{-1}-s\right)^{k + \frac12} 
   \tau ^{k + \frac12} = (s\tau + 1)^{k+\frac12} e(-r/4)$.
Using the fact that $\sum_{j=0}^3 e(j t r /4) = 4 \delta_{t, 0}$, 
where $\delta_{t,0}$
denotes the Kronecker delta,
we have
\begin{align*}
  \sum_{j=0}^3 h_j(\tau (s\tau + 1)^{-1}) 
  &=
  \sum_{j=0}^3 h_j(-(-\tau^{-1}-s)^{-1}) \\
  &=
  (s \tau + 1)^{k + \frac12}
  \sum_{u=0}^3 h_u(\tau).
\end{align*}
Therefore, 
$g\!\left(\dfrac{\tau}{8(s\tau+1)} \right) = (s\tau+1)^{k+\frac12} g\!\left(\dfrac{\tau}{8} \right)$, which implies 
 $g|_{k+\frac12} u(s) = g$ for any $s \in \Z$.

For $a \in \left\{ \pm 1 \right\}$, we observe that
$ v'(a) = 
  \begin{pmatrix} a & 8 \\ 0 & a \end{pmatrix}$.
  Since $a = a^{-1}$ for $a  = \pm 1$, it follows from the Fourier expansion of $h_j$ that
 $\sum_{j=0}^3 h_j(\tau + 8 a^{-1}) = \sum_{j=0}^3 h_j(\tau)$. 
Consequently, we obtain $g|_{k+\frac12} v(a) = g$ for $a \in \left\{ \pm 1 \right\}$.

For $a \in \left\{ \pm 3\right\}$, we utilize the decomposition
$
  v'(a) = 
  \begin{pmatrix} 1 & a \\ 0 & 1 \end{pmatrix}
  \begin{pmatrix} 0 & -1 \\ 1 & 0 \end{pmatrix}
  \begin{pmatrix} 1 & a \\ 0 & 1 \end{pmatrix}
$.
Then, we have
\begin{equation*}
  \sum_{j=0}^3 h_j(\tau + a) = \sum_{j=0}^3 e(-a j^2 r /8) h_j(\tau)
\end{equation*}
and
\begin{equation*}
  \sum_{j=0}^3 h_j(-\tau^{-1} + a) = \sum_{j=0}^3 e(-a j^2 r/8) h_j(-\tau^{-1}) \\
  = \frac12 \tau^{k + \frac12} e(r/8)
  \sum_{j=0}^3 e(-a j^2 r/8) \sum_{t} e(j t r/4) h_t(\tau) .
\end{equation*}
It follows that
\begin{align*}
  & \hspace{-1cm} \sum_{j=0}^3 h_j\!\left(-\frac{1}{\tau+a} + a\right) \\
  &= \frac12 \left( \tau+a \right)^{k + \frac12} e\!\left( \frac{r}{8} \right)
  \sum_{j=0}^3 e\!\left(\frac{-a j^2 r}{8} \right) \sum_{t=0}^3 e\!\left(\frac{j t r }{4}\right) h_t(\tau+a) \\
  &= \frac12 \left( \tau+a \right)^{k + \frac12} e\!\left(\frac{r}{8}\right)
  \sum_{j=0}^3 e\!\left(\frac{-a j^2 r}{8} \right) \sum_{t=0}^3 e\!\left(\frac{j t r}{4}\right) e\!\left(\frac{-a t^2 r}{8}\right) h_t(\tau) .  
\end{align*}
A straight-forward calculation shows that
\begin{equation*}
   \sum_{j=0}^3 e\!\left(\frac{-a j^2 r}{8} \right)  e\!\left( \frac{j t r}{4} \right)
   e\!\left( \frac{-a t^2 r}{8} \right)
   =
   2\, e\!\left( \frac{-a r}{8} \right).
\end{equation*}
Since $(a-1)(2k+1+r) \equiv 0 \mod 8$, we obtain
\begin{align*}
  \sum_{j=0}^3 h_j((a\tau + (a^2-1))(\tau+a)^{-1}) 
  &=
    \sum_{j=0}^3 h_j(-(\tau+a)^{-1} + a)  \\
  &= \left( \tau+a \right)^{k + \frac12} e((1-a)r/8)
   \sum_{t=0}^3  h_t(\tau) \\
  &= \left( \tau+a \right)^{k + \frac12} e((a-1)(2k+1)/8)
   \sum_{t=0}^3  h_t(\tau).
\end{align*}
Thus, 
\begin{equation*}
  g\! \left( \frac{a\tau+(a^2-1)}{8(\tau+a)} \right)
  =
  \left( \tau+a \right)^{k + \frac12} e((a-1)(2k+1)/8)
  g\! \left(\frac{\tau}{8}\right).
\end{equation*}
Since the automorphy factor for $v(a)$ is $j(v(a), \tau) = (8\tau+a)^{\frac12} e((a-1)/8)$ for $a \in \left\{ \pm 3 \right\}$,
we conclude that
$  g|_{k+\frac12} v(a) = g$.

Therefore, $g\in M_{k+\frac12}(8)$.
 Since $\beta(v_0) = 0$, $\beta(v_2) = 1/2$ and $\beta(v_1) = \beta(v_3) = r/8$,
the construction of $g$ implies that $g \in M_{k+\frac12}^{+,-r}(8)$.

Next, assume that $\phi$ is a Jacobi cusp form.
Since the constant term of $\phi$ vanishes,
the constant term of $h_0$ is also zero. 
By the transformation formulas for $\{h_j\}_j$,
it follows that the function $ |\mbox{Im}(\tau)^{-k/2-1/4} g(\tau)|$ is bounded on the upper half-plane $\HH$.
This shows that $g$ is a cusp form, and thus $g \in S_{k+\frac12}^{+,-r}(8)$.
\end{proof}

\begin{lemma}\label{lem:J_even_surj}
  Let $g \in M_{k+\frac12}^{+, -r}(8)$. We write the Fourier expansion of $g$ as
  \begin{equation*}
    g(\tau) = \sum_{\begin{smallmatrix} n \in \Z \\ n \equiv 0,4,-r \mod 8 \end{smallmatrix} } c_g(n) q^n 
                 =   \sum_{m = 0, 4, -r} f_m(8 \tau),
  \end{equation*}
  where $\displaystyle{f_m(\tau) := \sum_{\begin{smallmatrix} n \in \Z \\ n \equiv m \mod 8 \end{smallmatrix}} c_g(n) q^{n/8}}$.
  Define the function $J(g)$ by 
  \begin{equation*}
  (J(g))(\tau,z) := \sum_{j = 0}^3 h_j(\tau) \theta_{r,j}(\tau,z),
  \end{equation*}
  where the components $h_j(\tau)$ are set as
  \begin{equation*}
     h_j(\tau) := \begin{cases} 
       f_{2 j}(\tau), & \text{if } j \in \{ 0,2 \}, \\
       \frac12 f_{-r}(\tau), & \text{if } j \in \{ 1, 3\}.
     \end{cases}
  \end{equation*}
    Then $J(g)$ belongs to $J_{k+\frac{r+1}{2},D_r}$.
    Moreover, the map $J$ is the inverse of $\mathcal{J}_{r,k}^{\text{even}}$.
    Furthermore, 
    if $g \in S_{k+\frac12}^{+, -r}(8)$, then $J(g) \in J_{k+\frac{r+1}{2},D_r}^{\text{cusp}}$.
\end{lemma}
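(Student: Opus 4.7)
The strategy is to verify directly that $J(g)$ satisfies the three defining conditions of Definition~\ref{df:jacobi_lattice}, and then to check that $\mathcal{J}_{r,k}^{even} \circ J = \mathrm{id}$; together with the injectivity established in Lemma~\ref{lem:J_even_inj}, this will exhibit $J$ as the two-sided inverse. The argument is essentially the reverse of the derivation carried out in the proof of Lemma~\ref{lem:J_even_inj}.

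First I would dispatch the two easy conditions. Condition~(iii) (non-negativity of the Fourier exponents) is immediate from holomorphy of $g$ at infinity, which forces each $f_m$, hence each $h_j$, to have only non-negative powers of $q$. Condition~(ii) (the Heisenberg quasi-periodicity) is inherited linearly from the corresponding identity for each $\theta_{r,j}$, since $v_j + \lambda + D_r = v_j + D_r$ and $\beta(v_j,\mu) \in \Z$ for all $\lambda,\mu \in D_r$. For condition~(i) it suffices to check invariance under the generators $T$ and $S$ of $\mathrm{SL}(2,\Z)$. The $T$-invariance is a one-line check: the Fourier exponents of $h_j$, determined by the plus-space support condition on $g$, yield $h_j(\tau+1) = e(-\beta(v_j))\, h_j(\tau)$, which exactly cancels the factor $e(\beta(v_j))$ produced by $\theta_{r,j}(\tau+1,z)$ via~(\ref{id:theta_1}).

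The $S$-invariance is the main obstacle. By~(\ref{id:theta_2}) and the linear independence of the four theta series, the identity $J(g)|_{k+\frac{r+1}{2}, D_r} S = J(g)$ is equivalent to the vector-valued Weil-type relation
\[ h_j(-\tau^{-1}) \;=\; \tfrac12 \tau^{k+\frac12} e(r/8) \sum_{t=0}^{3} e(jtr/4)\, h_t(\tau), \qquad j = 0,1,2,3, \]
which, thanks to the built-in symmetry $h_1 = h_3$, reduces to three non-trivial equations. To obtain them I would reverse the chain of computations in the proof of Lemma~\ref{lem:J_even_inj}: the hypothesis $g \in M_{k+\frac12}^{+,-r}(8)$ provides, via Lemma~\ref{lem:g08}, the slash-invariances $g|_{k+\frac12} A = g$ for each generator $A \in \{u(s), v(\pm 1), v(\pm 3)\}$ of $\Gamma_0(8)$. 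Applied to $g(\tau) = \sum_j h_j(8\tau)$ after the substitution $\tau \mapsto -\tau^{-1} - s$, these translate into identities among the sums $\sum_j h_j\bigl(-(-\tau^{-1}-s)^{-1}\bigr)$; combined with the discrete orthogonality $\sum_{j=0}^{3} e(jtr/4) = 4\delta_{t,0}$ and the plus-space support that forces $h_1 = h_3$, they suffice to disentangle each $h_j(-\tau^{-1})$ and confirm the claimed formula. The technical heart of the argument is the bookkeeping — ensuring that the Fourier support condition $n \equiv 0, 4, -r \pmod 8$ defining $M_{k+\frac12}^{+,-r}(8)$ matches exactly the four-component Weil relation demanded by $D_r$.

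Finally, the identity $\mathcal{J}_{r,k}^{even}(J(g)) = g$ is transparent: $\mathcal{J}_{r,k}^{even}(J(g))(\tau) = \sum_{j=0}^3 h_j(8\tau) = f_0(8\tau) + f_4(8\tau) + f_{-r}(8\tau) = g(\tau)$, where the factor $2 \cdot \frac12 = 1$ accounts for $h_1 = h_3$. Combined with the injectivity in Lemma~\ref{lem:J_even_inj}, this shows $J$ is the two-sided inverse of $\mathcal{J}_{r,k}^{even}$. For the cusp-form claim: if $g \in S_{k+\frac12}^{+,-r}(8)$ then $c_g(0) = 0$, so $h_0$ has vanishing constant term, and Lemma~\ref{lem:dim_Eisenstein} then gives $J(g) \in J_{k+\frac{r+1}{2}, D_r}^{cusp}$.
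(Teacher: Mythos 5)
Your overall route is the same as the paper's: conditions (ii) and (iii) of Definition~\ref{df:jacobi_lattice} and the $T$-invariance are dispatched exactly as in the paper, the reduction of $S$-invariance (via (\ref{id:theta_2}) and linear independence of the $\theta_{r,j}$) to the vector-valued relation
$h_j(-\tau^{-1})=\tfrac12\tau^{k+\frac12}e(r/8)\sum_{t}e(jtr/4)h_t(\tau)$ is precisely the matrix identity the paper proves, and your treatment of $\mathcal{J}_{r,k}^{even}\circ J=\mathrm{id}$ and of the cusp statement via Lemma~\ref{lem:dim_Eisenstein} also coincides with the paper. So the plan is right; the issue is that the decisive step is asserted rather than proved.

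The gap is your claim that the vector-valued relation follows by ``reversing the chain of computations'' of Lemma~\ref{lem:J_even_inj}, using the generator invariances from Lemma~\ref{lem:g08}, the orthogonality $\sum_j e(jtr/4)=4\delta_{t,0}$ and $h_1=h_3$. That chain does not simply reverse: in Lemma~\ref{lem:J_even_inj} one starts from the known inversion behaviour of each component and deduces invariance of the sum, whereas here one only knows how the single function $g(\tau)=\sum_j h_j(8\tau)$ transforms under $\Gamma_0(8)$, and the $u(s)$-invariances you invoke (the identities among $\sum_j h_j(-(-\tau^{-1}-s)^{-1})$) only say that $\tau^{-(k+\frac12)}\sum_j h_j(-\tau^{-1})$ is essentially periodic; they cannot separate the components, and the orthogonality relation you cite is the tool of the opposite direction. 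The missing ingredient — the technical heart of the paper's proof — is the sieving identity $f_0(\tau)+f_4(\tau)=\frac14\sum_{a\bmod 8,\,2\mid a}g\!\left(\frac{\tau+a}{8}\right)$, which gives $f_0(-\tau^{-1})-f_4(-\tau^{-1})=\frac14\sum_{a=\pm1,\pm3}g\!\left(\frac{a\tau-1}{8\tau}\right)$; each term is then converted into $g\!\left(\frac{\tau-a}{8}\right)$ using the modularity of $g$ under the matrices $\left(\begin{smallmatrix} a & (a^2-1)/8 \\ 8 & a \end{smallmatrix}\right)\in\Gamma_0(8)$ together with the explicit multiplier $e\!\left(\frac{a-1}{8}\right)\sqrt{\tau}$, the support condition and $2k+1+r\equiv0\bmod 4$ collapse the resulting phase sums to give (\ref{id:f0-f4}), and the further substitutions $\tau\mapsto\tau+1$ and $\tau\mapsto-\tau^{-1}-1$ are needed to separate $f_0$ from $f_4$ and obtain (\ref{id:f_0_inverse})--(\ref{id:f_r_inverse}). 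Without this averaging-and-substitution argument (or an equivalent mechanism), your statement that the listed ingredients ``suffice to disentangle each $h_j(-\tau^{-1})$'' is unsupported, so the proof that $J(g)$ satisfies Definition~\ref{df:jacobi_lattice}(i) — exactly where the hypothesis $g\in M_{k+\frac12}^{+,-r}(8)$ must be used in earnest — is incomplete.
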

\begin{proof}
For simplicity, we write $\phi = J(g)$.
The transformation laws in  Definition~\ref{df:jacobi_lattice}(ii) and (iii) for $\phi$
follow directly from its definition.
Thus, it suffices to verify the transformation law in Definition~\ref{df:jacobi_lattice} (i).
Since the invariance $\phi(\tau+1,z) = \phi(\tau,z)$ follows from the definition of $\phi$ and 
the transformation formulas for $\theta_{r,j}$ under $\left(\begin{smallmatrix} 1 & 1 \\ 0 & 1 \end{smallmatrix}\right)$, 
we only need to show the transformation law with respect to $\left(\begin{smallmatrix} 0 & -1 \\ 1 & 0 \end{smallmatrix}\right)$:
\begin{equation}\label{id:J_inverse}
  \phi(-\tau^{-1}, \tau^{-1} z) =
  \tau^{k+\frac{r+1}{2}} e(\tau^{-1} \beta(z)) \phi(\tau,z).
\end{equation}
To establish (\ref{id:J_inverse}),
it is sufficient to show the following relations for the components $f_m$:
\begin{align}
  \label{id:f_0_inverse}
  f_0(-\tau^{-1}) &=
  \frac12
  \sqrt{\tau}^{2k+1} e(r/8)
   \left\{ f_0(\tau) + f_4(\tau) + f_{-r}(\tau)\right\}, \\
  \label{id:f_4_inverse}
  f_4(-\tau^{-1}) &=
  \frac12
  \sqrt{\tau}^{2k+1} e(r/8)
   \left\{ f_0(\tau) + f_4(\tau) - f_{-r}(\tau)\right\}, \\  
  \label{id:f_r_inverse}
  f_{-r}(-\tau^{-1}) &=
  \sqrt{\tau}^{2k+1} e(r/8)
   \left\{ f_0(\tau) -  f_4(\tau) \right\}.  
\end{align}

Since we have the relation
\begin{equation*}
 \frac14
  \sum_{\begin{smallmatrix} a\!\!\! \mod 8 \\ (a,2) = 2 \end{smallmatrix}} g\! \left( \frac{\tau + a}{8} \right)
  = f_0(\tau) + f_4(\tau),
\end{equation*}
it follows that
\begin{align*}
  f_0(-\tau^{-1}) &=
   \frac18 \sum_{a \!\!\! \mod 8} g\! \left( \frac{-\tau^{-1}+a}{8} \right) \\
  & = 
   \frac12 \left( f_0(-\tau^{-1}) + f_4(-\tau^{-1})\right) +
   \frac18 \sum_{\begin{smallmatrix} a \!\!\! \mod 8 \\ (a,2) = 1 \end{smallmatrix}} g\! \left( \frac{a\tau -1}{8\tau} \right).
\end{align*}
Consequently, the difference $f_0(-\tau^{-1}) - f_4(-\tau^{-1})$ can be expressed as
\begin{align*}
    f_0(-\tau^{-1}) - f_4(-\tau^{-1})
    &=
   \frac14 \sum_{\begin{smallmatrix} a \!\!\! \mod 8 \\ (a,2) = 1 \end{smallmatrix}} g\! \left( \frac{a\tau -1}{8\tau} \right) \\
   & = 
   \frac14 \sum_{a = \pm 1, \pm 3} g\! \left( \begin{pmatrix} a & (a^2-1)/8 \\ 8 & a \end{pmatrix} 
   \begin{pmatrix} 1 & - a \\ 0 & 8 \end{pmatrix} \cdot 
   \tau \right) \\
   &=
   \frac14 \sum_{a=\pm 1, \pm 3} j\!\left( \begin{pmatrix} a & (a^2-1)/8 \\ 8 & a \end{pmatrix}, 
   \begin{pmatrix} 1 & - a \\ 0 & 8 \end{pmatrix} \cdot 
   \tau
    \right)^{2k+1}
   g\! \left( \frac{\tau - a}{8} \right) .
\end{align*}
By using the formula for the automorphy factor
\begin{equation*}
j\! \left( \begin{pmatrix} a & (a^2-1)/8 \\ 8 & a \end{pmatrix}, \begin{pmatrix} 1 & -a \\ 0 & 8 \end{pmatrix} \tau \right)
 = \sqrt{\left( \frac{-4}{a} \right)}^{-1} \left( \frac{8}{a} \right) \sqrt{\tau} = e\!\!\left(\frac{a-1}{8}\right) \sqrt{\tau},
\end{equation*}
which holds for any odd integer $a$, we obtain
\begin{align*}
    f_0(-\tau^{-1}) - f_4(-\tau^{-1})
    &=
    \frac14 \sqrt{\tau}^{2k+1} \sum_{a=\pm 1, \pm 3} e\!\!\left( \frac{(a-1)(2k+1)}{8} \right)  g\!\!\left( \frac{\tau-a}{8} \right) \\
    &=
    \frac14 \sqrt{\tau}^{2k+1} \sum_{m = 0,4,-r}  \sum_{a=\pm 1, \pm 3} e\!\!\left( \frac{(a-1)(2k+1) - a r_i}{8} \right)
    f_m(\tau).
\end{align*}
Under the condition $2k+1+r \equiv 0 \mod 4$, this identity simplifies to
\begin{equation}\label{id:f0-f4}
    f_0(-\tau^{-1}) - f_4(-\tau^{-1})
    =
     \sqrt{\tau}^{2k+1} e(r/8) f_{-r}(\tau).
\end{equation}
Since $\sqrt{-\tau^{-1}}^{-1} = - i \sqrt{\tau}$, substituting $- \tau^{-1}$ for $\tau$ in $(\ref{id:f0-f4})$ 
yields the identity $(\ref{id:f_r_inverse})$.

Since $-\frac{1}{\tau+1} = -1 + \frac{\tau}{\tau+1}$, substituting $\tau+1$ for $\tau$ in $(\ref{id:f0-f4})$ yields
\begin{equation}\label{id:f0f4+}
  f_0\!\left(\frac{\tau}{\tau+1}\right) + f_4\!\left(\frac{\tau}{\tau+1}\right)
  =
  \sqrt{\tau+1}^{2k+1} f_{-r}(\tau).
\end{equation}
Similarly, using the relation $- \frac{1}{-\tau^{-1}-1} = \frac{\tau}{\tau+1}$, the substitution $\tau \rightarrow -\tau^{-1} - 1$ in $(\ref{id:f0-f4})$ gives
\begin{equation}\label{id:f0f4-}
  f_0\!\left(\frac{\tau}{\tau+1}\right) - f_4\!\left(\frac{\tau}{\tau+1}\right)
  =
  \sqrt{-\tau^{-1}-1}^{2k+1} e\!\left( \frac14 r \right)  f_{-r}(-\tau^{-1}).
\end{equation}
By adding $(\ref{id:f0f4+})$ and $(\ref{id:f0f4-})$, we obtain
\begin{equation*}
  f_0\!\left(\frac{\tau}{\tau+1}\right)
  = \frac12 \left\{ \sqrt{\tau+1}^{2k+1} f_{-r}(\tau) +  \sqrt{-\tau^{-1}-1}^{2k+1} e\!\left( \frac14 r \right)  f_{-r}(-\tau^{-1}) \right\}.
\end{equation*}
Noting that $\sqrt{-\tau^{-1}-1} \sqrt{\tau} = e(1/4)\sqrt{\tau+1}$, this can be rewritten as
\begin{equation*}
  f_0\!\left(\frac{\tau}{\tau+1}\right)
  = \frac12 \sqrt{\tau+1}^{2k+1} \left\{  f_{-r}(\tau) +  \sqrt{\tau}^{-2k-1} e\!\left( \frac14 (2k+1+r) \right)  f_{-r}(-\tau^{-1}) \right\}.
\end{equation*}
By virtue of $(\ref{id:f_r_inverse})$ and the condition $2k+1+r \equiv 0 \mod 4$, we arrive at
\begin{equation*}
  f_0\!\left(\frac{\tau}{\tau+1}\right)
  = \frac12 \sqrt{\tau+1}^{2k+1} \left\{  f_{-r}(\tau) +   e\!\left( \frac{r}{8} \right)  (f_0(\tau) - f_4(\tau)) \right\}.
\end{equation*}
Replacing $\tau$ with $\tau - 1 $ in this identity yields (\ref{id:f_0_inverse}).
Finally, the identities $(\ref{id:f_0_inverse})$ and $(\ref{id:f0-f4})$ together provide (\ref{id:f_4_inverse}).

The identity
\begin{equation*}
  \begin{pmatrix}
   h_0(-\tau^{-1}) \\
   h_2(-\tau^{-1}) \\
   h_1(-\tau^{-1}) \\
   h_3(-\tau^{-1})
  \end{pmatrix}
  =
    \frac12 \sqrt{\tau}^{2k+1} e(r/8)
  \begin{pmatrix}
    1 & 1 &  1 & 1 \\
    1 & 1 & -1 & -1 \\
    1 & -1 & e(r/4) & -e(r/4) \\ 
    1 & -1 & -e(r/4) & e(r/4) 
  \end{pmatrix}
    \begin{pmatrix}
   h_0(\tau) \\
   h_2(\tau) \\
   h_1(\tau) \\
   h_3(\tau)
  \end{pmatrix} 
\end{equation*}
follows directly from the identities (\ref{id:f_0_inverse}), (\ref{id:f_4_inverse}) and (\ref{id:f_r_inverse}), along with the fact that 
$h_1 = h_3 = \frac12 f_{-r}$.

Combing this identity with the transformation law for $\{\theta_{r,j}\}$ (see (\ref{id:theta_inverse})),
we obtain (\ref{id:J_inverse}).
This establishes the transformation law in Definition~\ref{df:jacobi_lattice} (i) for $\phi$.
We therefore conclude that $J(g) = \phi \in J_{k+\frac{r+1}{2},D_r}$.

If $g \in S_{k+\frac12}^{+, -r}(8)$, the constant term of $f_0$ vanishes, 
which implies that the constant term of $h_0$ is also zero.
It then follows that $\phi$ is a Jacobi cusp form (see Lemma~\ref{lem:dim_Eisenstein}), 
so $J(g) \in J_{k+\frac{r+1}{2},D_r}^{\text{cusp}}$.

Finally, from the construction of $J$, it is straightforward that $J$ is the inverse of $\mathcal{J}_{r,k}^{\text{even}}$.
\end{proof}

\begin{lemma}\label{lem:J_even_hecke}
 The map $\mathcal{J}_{r,k}^{\text{even}}$ is compatible with the action of Hecke operators.
 That is, for any $\phi \in J_{k+\frac{r+1}{2}, D_r}$ and for any odd prime $p$, we have
 \begin{equation*}
 \mathcal{J}_{r,k}^{\text{even}}(\phi|T^J(p)) = 
   (\mathcal{J}_{r,k}^{\text{even}}(\phi))|T(p^2).
 \end{equation*}
\end{lemma}
\begin{proof}
The proof of this lemma is analogous to that for $\mathcal{J}_{r,k}^{\text{odd}}$ (see the proof of Proposition~\ref{prop:J_M_odd}). 
It suffices to establish the identity between the Fourier coefficients of $\mathcal{J}_{r,k}^{\text{even}}(\phi|T^J(p))$ and 
those of $(\mathcal{J}_{r,k}^{\text{even}}(\phi))|T(p^2)$.

Consider the Fourier expansion of $\phi \in J_{k+\frac{r+1}{2},D_r}$:
\begin{equation*}
    \phi(\tau,z) =  \sum_{n' \in \Z} \sum_{r' \in D_r^\sharp} C(n',r') e(n'\tau + \beta(r',z)) .
\end{equation*}
Since the weight $k+\frac{r+1}{2}$ is even, $\phi$ satisfies $\phi(\tau,-z) = \phi(\tau,z)$,
which implies that the relation $C(n',-r') = C(n',r')$ holds
for all $(n',r') \in \Z \times D_r^\sharp$.

  From the definition of $T^J(p)$ (see Lemma~\ref{lem:hecke_fourier}), 
  we have
  \begin{equation*}
  (\phi |_{k+\frac12,D_r} T^J(p))(\tau,z)
  = \sum_{n' \in \Z} \sum_{r' \in D_r^\sharp} C^{**}(n',r') e(n'\tau + \beta(r',z)),
  \end{equation*}
  where the Fourier coefficients $C^{**}(n',r')$ are given by
  \begin{align}
  \label{id:C^**}
   C^{**}(n',r') 
  &=
    C(p^{2}n', p r') \\
  \notag
  & \quad
    + p^{k-1}
       \left( \frac{(-1)^{k}}{p} \right)  
      \left( \frac{8(n'- \beta(r'))}{p} \right) 
          C(n',r') \\
  \notag
    & \quad
    + p^{2k-1}
    \sum_{\lambda \in D_r/ p D_r }C\!\left(\frac{1}{p^{2}}(n' - \beta(r',\lambda) + \beta(\lambda)), \frac{1}{p}(r' -  \lambda)\right).
  \end{align}  
  Note that the quadratic character in the second term on the right-hand side differs from that in (\ref{id:C^*}).
  
  For simplicity, let $g = \mathcal{J}_{r,k}^{\text{even}}(\phi)$, and write its Fourier expansion as
  $ g(\tau) = \sum_{m = 0}^{\infty} A(m) q^{m}$.
  Then, the Fourier expansion of $g|_{k+\frac12}T(p^2)$ is given by
  \begin{equation*}
  (g|_{k+\frac12}T(p^2))(\tau) =  \sum_{m = 0}^\infty A^{**}(m) q^{m},
  \end{equation*}
  where the Fourier coefficients $A^{**}(m)$ are given by
  \begin{equation}\label{id:A^**}
    A^{**}(m) =     
    A(p^2 m) + \left( \frac{(-1)^k m }{p}\right) p^{k-1} A(m) + p^{2k-1} A(m/p^2).
  \end{equation}
  From the definition of $\mathcal{J}_{r,k}^{\text{even}}$, the relation between $C(n',r')$ and $A(m)$ is given by
  \begin{equation*}
  C(n',r') = 
  \begin{cases} 
    A(8(n' - \beta(r'))), & \mbox{if $r' \equiv v_0, v_2 \mod D_r$}, \\
    \frac12 A(8(n' - \beta(r'))), & \mbox{if $r' \equiv v_1, v_3 \mod D_r$},
    \end{cases}
  \end{equation*}
  for any $(n',r') \in \Z \times D_r^{\sharp}$.
  We also note that $C(n',v_1) = C(n',v_3)$.

  By comparing the right-hand sides of (\ref{id:C^**}) and (\ref{id:A^**}),
  we find that
  \begin{equation*}
  C^{**}(n',v_j) = \begin{cases}
   A^{**}(8(n'-\beta(v_j))), & \text{if } j \in \{ 0, 2\} , \\
    \frac12 A^{**}(8(n' - \beta(v_j))), & \text{if } j \in \{1, 3\}.
   \end{cases}
  \end{equation*}
  Summing over $j \in \{0,1,2,3\}$, we conclude that
  $ \mathcal{J}_{r,k}^{\text{even}}(\phi|T^J(p)) = 
   (\mathcal{J}_{r,k}^{\text{even}}(\phi))|T(p^2) $.
\end{proof}

We are now ready to complete the proof of Proposition~\ref{prop:J_M_even}.

\begin{proof}[Proof of Proposition~\ref{prop:J_M_even}]
The assertion of 
Proposition~\ref{prop:J_M_even} follows from Lemma~\ref{lem:J_even_inj} (the injectivity of $\mathcal{J}_{r,k}^{\text{even}}$), 
Lemma~\ref{lem:J_even_surj} (the surjectivity of $\mathcal{J}_{r,k}^{\text{even}}$), and
Lemma~\ref{lem:J_even_hecke} (the compatibility of $\mathcal{J}_{r,k}^{\text{even}}$ with Hecke operators).
\end{proof}

\subsection{Shimura correspondence and newforms}\label{ss:Shimura_even}
The theory of newforms in $S_{k+\frac12}^+(8)$ was introduced in \cite[Introduction]{UY}.
First, we recall the definition of the operators $U(d)$ and $U_k(4) = U(4) \wp_k$:
\begin{equation*}
  (\sum_{n \in \Z} a_n q^n)|U(d) = \sum_{n \in \Z} a_{dn} q^n, \qquad
  (\sum_{n\in \Z } a_n q^n )| \wp_k \ =\ \!\!\!\! \sum_{\begin{smallmatrix} n \in \Z \\ (-1)^k n \equiv 0,1 \mod 4 \end{smallmatrix}} 
  \!\!\!\!\! a_n q^n.
\end{equation*}
defined for any formal power $q$-series. 
It was shown in~\cite[Thm. 1 (1), Prop. 6]{UY} that the image 
$S_{k+\frac12}^{+}(4)|U_k(4) := \left\{ f|U_k(4) \, : \, f \in S_{k+\frac12}^+(4) \right\}$ is a subspace of $S_{k+\frac12}^{+}(8)$. 
We define the space of oldforms by
\begin{equation*}
  S_{k+\frac12}^{\text{old},+}(8) := S_{k+\frac12}^+(4) + S_{k+\frac12}^{+}(4)|U_k(4).
\end{equation*}
The space of newforms $S_{k+\frac12}^{\text{new},+}(8)$ is then defined as the orthogonal complement of $S_{k+\frac12}^{\text{old},+}(8)$
in $S_{k+\frac12}^{+}(8)$.
It is known from \cite[Thm. 1 (1)]{UY} that we have the following direct sum decompositions:
\begin{align*}
  S_{k+\frac12}^{\text{old},+}(8) &=  S_{k+\frac12}^+(4) \oplus S_{k+\frac12}^{+}(4)|U_k(4), \\
  S_{k+\frac12}^{+}(8) &=  S_{k+\frac12}^{\text{new},+}(8) \oplus S_{k+\frac12}^{\text{old},+}(8).
\end{align*}
Finally, we set 
\begin{equation*}
 S_{k+\frac12}^{\text{new},+,-r}(8) := 
 S_{k+\frac12}^{\text{new},+}(8) \cap  S_{k+\frac12}^{+,-r}(8).
\end{equation*}

The following theorem essentially follows from \cite[Thm. 1, Prop. 4, Cor. 2]{UY}.
\begin{theorem}[{\cite{UY}}]\label{thm:UY}
For any $k \in \Z$ and $r \in \{1, 3, 5, 7\}$, we have an isomorphism of Hecke modules:
\begin{equation*}
 S_{k+\frac12}^{\text{new},+,-r}(8) \ \cong \ S_{2k}^{\text{new}, \epsilon_2}(2),
\end{equation*}
where $\epsilon_2 = - \left( \frac{-8}{r} \right)$.
\end{theorem}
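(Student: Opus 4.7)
The plan is to assemble the isomorphism from the three results of Ueda--Yamana cited just before the theorem. The core input is Theorem~1 of~\cite{UY}, which establishes a Hecke-equivariant Shimura-type isomorphism between the newform subspace of the Kohnen plus space at level~$8$ and the newform subspace at level~$2$,
\begin{eqnarray*}
  S_{k+\frac12}^{new,+}(8) &\cong& S_{2k}^{new}(2),
\end{eqnarray*}
where the Hecke operators $T(p^2)$ (for odd primes $p$) on the left match the operators $T(p)$ on the right. Since this isomorphism and all the refined subspaces appearing below are Hecke-invariant, it suffices to match up the correct summands on the two sides.

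Next I would decompose the half-integral weight side. Proposition~4 of~\cite{UY} gives the orthogonal decomposition of $S_{k+\frac12}^{+}(8)$ into the two subspaces $S_{k+\frac12}^{+,-r}(8)$ cut out by the congruence condition $n \equiv 0,4,-r \!\!\mod 8$ on the support of Fourier coefficients (with $r\in\{1,5\}$ for odd $k$ and $r\in\{3,7\}$ for even $k$, as recorded in~(\ref{id:S_+_r})). Being defined by a congruence condition on the index $n$ modulo~$8$, these subspaces are stable under $T(p^2)$ for all odd primes~$p$, so intersecting with $S_{k+\frac12}^{new,+}(8)$ yields the Hecke-invariant direct sum decomposition
\begin{eqnarray*}
  S_{k+\frac12}^{new,+}(8) &=& S_{k+\frac12}^{new,+,-r_1}(8) \oplus S_{k+\frac12}^{new,+,-r_2}(8),
\end{eqnarray*}
with $(r_1,r_2)=(1,5)$ or $(3,7)$ according to the parity of~$k$.

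On the integer-weight side, the Atkin--Lehner--Fricke involution $W_2$ commutes with $T(p)$ for all odd primes~$p$ and splits $S_{2k}^{new}(2)$ into its $\pm i^{-2k}$-eigenspaces $S_{2k}^{new,+}(2)\oplus S_{2k}^{new,-}(2)$. The remaining step is to identify which of the two congruence pieces on the half-integral side is sent to which Fricke eigenspace; this is precisely the content of Corollary~2 of~\cite{UY}, which pins down the sign in terms of the residue class~$-r \!\!\mod 8$ and yields
\begin{eqnarray*}
  S_{k+\frac12}^{new,+,-r}(8) &\cong& S_{2k}^{new,\epsilon_2}(2), \qquad \epsilon_2 \ =\ -\!\left(\tfrac{-8}{r}\right).
\end{eqnarray*}
The only real obstacle is bookkeeping the sign convention in the Fricke eigenvalue (the factor $i^{-k}$ in the definition of $M_{k}^{new,\epsilon}(N)$ at the start of the paper must be reconciled with the convention used in~\cite{UY}); once the conventions are aligned, the theorem follows by direct citation of the three results~\cite[Thm.~1, Prop.~4, Cor.~2]{UY}, and there is no further computation to carry out.
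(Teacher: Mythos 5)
Your plan follows the same route as the paper: quote the Hecke-equivariant isomorphism $S_{k+\frac12}^{new,+}(8)\cong S_{2k}^{new}(2)$ from \cite[Thm.~1]{UY}, split both sides, and match the pieces. The place where your sketch is too thin is exactly the step you dismiss as ``bookkeeping the sign convention'': the matching of the congruence piece with the Fricke eigenspace is \emph{not} a direct citation of \cite[Cor.~2]{UY}. What Cor.~2 together with Prop.~4 of \cite{UY} actually provides is an equivalence between the mod~$8$ support condition on the Fourier coefficients of $g$ and an eigenvalue equation for the operator $U_k(4)$; concretely, for a Hecke eigenform $g\in S_{k+\frac12}^{new,+,-r}(8)$ one deduces $g|U_k(4)=-\left(\frac{8}{r}\right)2^{k-1}g$. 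To convert this into a Fricke sign on the level-$2$ side the paper needs two further ingredients: (i) the compatibility $J_k(g|U_k(4))=J_k(g)|U(2)$, which is part of \cite[Thm.~1\,(3)]{UY} and turns the $U_k(4)$-eigenvalue into the statement $a_f(2)=-\left(\frac{8}{r}\right)2^{k-1}$ for $f=J_k(g)$; and (ii) the relation $f|_{2k}\left(\begin{smallmatrix}0&-1\\2&0\end{smallmatrix}\right)=-2^{1-k}a_f(2)\,f$ valid for newforms of level $2$, which the paper imports from \cite{zhang}, not from \cite{UY}. Combining these gives the Fricke eigenvalue $\left(\frac{8}{r}\right)=\epsilon_2 i^{-2k}$, i.e.\ $f\in S_{2k}^{new,\epsilon_2}(2)$. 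So there is a short but genuine computation, with one input outside \cite{UY}, hiding in your ``reconcile the conventions'' remark; without it the identification of the sign $\epsilon_2=-\left(\frac{-8}{r}\right)$ is not justified.

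The rest of your outline is sound and agrees with what the paper does implicitly: the subspaces $S_{k+\frac12}^{+,-r}(8)$ are cut out by a congruence on the Fourier support, hence are stable under $T(p^2)$ for odd $p$, and once each of the two pieces of $S_{k+\frac12}^{new,+}(8)$ is shown to land in the corresponding Fricke eigenspace of $S_{2k}^{new}(2)$, surjectivity (and hence the asserted isomorphism) follows because $J_k$ is an isomorphism onto $S_{2k}^{new}(2)$ and the two target eigenspaces are complementary.
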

\begin{proof}
According to \cite[Thm. 1 (3)]{UY}, there exists an isomorphism 
\begin{equation*}
  J_k \ : \ S_{k+\frac12}^{\text{new},+}(8) \xrightarrow{\cong}  S_{2k}^{\text{new}}(2) 
\end{equation*}
as Hecke modules. 
This means that,  
\begin{equation*}
  J_k(g|T(p^2)) = J_k(g)|T(p) \quad \text{and} \quad J_k(g|U_k(4)) = J_k(g)|U(2)
\end{equation*}
for any $g \in S_{k+\frac12}^{\text{new},+}(8)$ and any odd prime $p$,
where $T(p)$ denotes the usual Hecke operator acting on $S_{2k}(2)$.

According to~\cite[Cor. 2, Prop. 4]{UY}, it is known that the identity
\begin{equation*}
  2^{1-k}  g|U_k(4) = -\epsilon g
\end{equation*}
is equivalent to the condition that
\begin{equation*}
  a_g(n) = 0 \quad \text{whenever} \quad \left( \frac{8}{(-1)^k n} \right) = - \epsilon,
\end{equation*}
where $a_g(n)$ denotes the $n$-th Fourier coefficient of $g$.

Assume that $g \in S_{k+\frac12}^{\text{new},+,-r}(8)$ is a Hecke eigenform.
To complete the proof, it suffices to show that $J_k(g) \in S_{2k}^{\text{new},\epsilon_2}(2)$.
Since  $g \in S_{k+\frac12}^{\text{new},+,-r}(8)$, 
its $n$-th Fourier coefficient $a_g(n)$ vanishes unless $n \equiv 0,4, -r \!\! \mod 8$.
This implies that
$g|U_k(4) = 2^{k-1} \left(\frac{8}{(-1)^k m} \right) g$, where 
$m$ is chosen such that $m \equiv 4 - r \!\! \mod 8$.
Specifically, we can take
\begin{equation*}
  m = \begin{cases}
    -5, & \text{if } r = 1, \\
    -7, & \text{if } r = 3, \\
    -1, & \text{if } r = 5, \\
    -3, & \text{if } r = 7.
  \end{cases}
\end{equation*}
Recalling that $k + \frac{r+1}{2} $ is even,
we have $(-1)^k = (-1)^{(r+1)/2}$. A direct calculation of the Kronecker symbol then yields
$g|U_k(4) = - \left(\frac{8}{r} \right) 2^{k-1}  g$.

Let $f = J_k(g)$ and consider its Fourier expansion $f = \sum a_n q^n$.
Then we have $f|U(2) = a_2 f$.
Since $J_k(g|U_k(4)) = J_k(g)|U(2)$, it follows that $a_2 = - \left(\frac{8}{r} \right) 2^{k-1} $.
It is known that
\begin{equation*}
  f|_{2k}\left( \begin{smallmatrix} 0 & -1 \\ N & 0 \end{smallmatrix} \right)
  =
  c f
\end{equation*}
with $c = -2^{1-k} a_2$ (see, e.g., \cite[Introduction]{zhang}).
Substituting the expression for $a_2$, we obtain
\begin{equation*}
  c = \left( \frac{8}{r} \right)  = (-1)^k (-1)^{\frac{r+1}{2}}  \left( \frac{8}{r} \right)  =  - (-1)^k \left( \frac{-8}{r}\right).
\end{equation*}
Recalling the definition $\epsilon_2 = - \left( \frac{-8}{r}\right)$, we have
$c = \epsilon_2 i^{-2k}$, as required.
We therefore conclude that $J_k(g) = f \in S_{2k}^{\text{new},\epsilon_2}(2)$.
\end{proof}

The following corollary follows from Proposition~\ref{prop:J_M_even} and Theorem~\ref{thm:UY}. 
Recall the assumptions $r \in \{1,3,5,7\}$ and $k \geq 0$.
\begin{cor}\label{cor:J_S_half}
There is an isomorphism of Hecke modules
\begin{equation*}
  J_{k+\frac{r+1}{2},D_r}^{\text{cusp, new}} \cong S_{k+\frac12}^{\text{new},+,-r}(8).
\end{equation*}
\end{cor}
\begin{proof}
  Let
 \begin{equation*}
   S_{k+\frac12}^{(1),-r} := \mathcal{J}_{r,k}^{\text{even}}(J_{k+\frac{r+1}{2},D_r}^{\text{cusp, new}})
   \quad \text{and} \quad
   S_{k+\frac12}^{(0),-r} := \mathcal{J}_{r,k}^{\text{even}}(J_{k+\frac{r+1}{2},D_r}^{\text{cusp, old}}).
 \end{equation*}
 By Proposition~\ref{prop:J_M_even}, the space $S_{k+\frac12}^{+,-r}(8)$ admits the direct sum decomposition
  $S_{k+\frac12}^{+,-r}(8) = S_{k+\frac12}^{(1),-r} \oplus S_{k+\frac12}^{(0),-r}$.
 Thus, it suffices to show that $S_{k+\frac12}^{(1),-r} = S_{k+\frac12}^{\text{new},+,-r}(8)$.

We set $S_{k+\frac12}^{\text{old},+,-r}(8) := S_{k+\frac12}^{\text{old},+}(8) \cap S_{k+\frac12}^{+,-r}(8)$.
From the direct sum decomposition $S_{k+\frac12}^{+}(8) = S_{k+\frac12}^{\text{new},+}(8) \oplus S_{k+\frac12}^{\text{old},+}(8)$,
it follows that $S_{k+\frac12}^{+,-r}(8) = S_{k+\frac12}^{\text{new}, +,-r}(8) \oplus S_{k+\frac12}^{\text{old},+,-r}(8)$.

By Theorem~\ref{thm:UY} and the strong multiplicity one theorem (see \cite[Cor. 4.6.30]{Miyake}), we obtain 
the inclusion 
$S_{k+\frac12}^{(0),-r} \subset S_{k+\frac12}^{\text{old},+,-r}(8)$. 
This inclusion can be shown as follows.
Suppose that $\phi \in J_{k+\frac{r+1}{2},D_r}^{\text{cusp, old}}$ is a Hecke eigenform. 
Then, there exists a Hecke eigenform
$f_1 \in S_{2k}^{\epsilon_1}(1)$ such that the eigenvalues of $\phi$ are determined by those of $f_1$.
If we assume that $\mathcal{J}_{r,k}^{\text{even}}(\phi) \in S_{k+\frac12}^{\text{new},+,-r}(8)$, 
then there must exist a newform $f_2 \in S_{2k}^{\text{new},\epsilon_2}(2)$
whose eigenvalues correspond to those of $\phi$.
However, this would imply that the eigenvalues of $f_1$ and $f_2$ coincide, which contradicts the strong multiplicity one theorem.
Thus, we obtain the desired inclusion.

According to \cite[Thm. 1 (1)]{UY}, it is known that
 \begin{equation*}
   \dim S_{k+\frac12}^{+}(8)
   = \dim S_{k+\frac12}^{\text{new},+}(8) + 2 \dim S_{k+\frac12}^+(4)
   = \dim S_{k+\frac12}^{\text{new},+}(8) + 2 \dim S_{2k}(1),
 \end{equation*}  
 where the last equality follows from~\cite{Ko}.
 In view of (\ref{id:S_+_r}), we obtain the following decomposition for the dimension:
 \begin{equation*}
   \dim S_{k+\frac12}^{+}(8)
    =
     \begin{cases}
       \dim S_{k+\frac12}^{+,-1}(8) +  \dim S_{k+\frac12}^{+,-5}(8),
       & \text{if } k \text{ is odd}, \\
       \dim S_{k+\frac12}^{+,-3}(8) +  \dim S_{k+\frac12}^{+,-7}(8),
       & \text{if } k \text{ is even}.
     \end{cases}
 \end{equation*}
 
First, we consider the cases $r=1$ and $r=5$. 
We have the following relation for the dimensions:
 \begin{align*}
   \left(\dim S_{k+\frac12}^{(1),-1} + \dim S_{k+\frac12}^{(0),-1} \right)
   & +  \left(  \dim S_{k+\frac12}^{(1),-5} + \dim S_{k+\frac12}^{(0),-5} \right) \\
   &=
      \dim S_{k+\frac12}^{+}(8) \\ 
   &=
   \dim  S_{k+\frac12}^{\text{new},+}(8) + 2 \dim S_{2k}(1).
 \end{align*}
 
 Recall from Proposition~\ref{prop:J_old} that
 $\dim S_{k+\frac12}^{(0),-1} = \dim S_{k+\frac12}^{(0),-5} = \dim S_{2k}(1)$.
 
 Since we have the inclusion $S_{k+\frac12}^{(0),-1} \oplus S_{k+\frac12}^{(0),-5} \subset
  S_{k+\frac12}^{\text{old},+,-1}(8) \oplus S_{k+\frac12}^{\text{old},+,-5}(8) = 
  S_{k+\frac12}^{\text{old},+}(8)$,
  and since $\dim S_{k+\frac12}^{\text{old},+}(8) = 2 \dim S_{k+\frac12}^+(4) = 2 \dim S_{2k}(1)$,
  the equality of dimensions implies that
  \begin{equation*}
  S_{k+\frac12}^{(0),-1} = S_{k+\frac12}^{\text{old},+,-1}(8)
  \quad \text{and} \quad 
  S_{k+\frac12}^{(0),-5} = S_{k+\frac12}^{\text{old},+,-5}(8).
  \end{equation*}
  Consequently, we ontain
  \begin{equation*} 
  S_{k+\frac12}^{(1),-1} = S_{k+\frac12}^{\text{new},+,-1}(8) \quad
  \text{and} \quad 
  S_{k+\frac12}^{(1),-5} = S_{k+\frac12}^{\text{new},+,-5}(8).
  \end{equation*}
  
  The cases for $r = 3$ and $r = 7$ follows from an analogous argument. 
\end{proof}

We are now in a position to complete the proof of Theorem~\ref{thm:even_weight_case}.
\begin{proof}[Proof of Theorem~\ref{thm:even_weight_case}]
Theorem~\ref{thm:even_weight_case} follows immediately from
Corollary~\ref{cor:J_S_half}, which establishes the isomorphism 
\begin{equation*}
J_{k+\frac{r+1}{2},D_r}^{\text{cusp, new}} \cong S_{k+\frac12}^{\text{new},+,-r}(8),
\end{equation*}
and Theorem~\ref{thm:UY} which provides the isomorphism
\begin{equation*}
 S_{k+\frac12}^{\text{new},+,-r}(8) \ \cong \ S_{2k}^{\text{new}, \epsilon_2}(2).
\end{equation*}
Combining these two isomorphisms as Hecke modules, we obtain the assertion.
\end{proof}

\section{Proof of Theorems~\ref{thm:main} and \ref{thm:k_1}}\label{sec:proof_main_k_1}
In this section, we provide the proofs of the main results of this paper, which establish the isomorphisms between
Jacobi forms of index $D_r$ and elliptic modular forms of level $2$.

\subsection{Proof of Theorem~\ref{thm:main}}
We first provide the proof of Theorem~\ref{thm:J_new}
\begin{proof}[Proof of Theorem~\ref{thm:J_new}]
Theorem~\ref{thm:J_new} follows from Theorem~\ref{thm:odd_weight_case} 
for the case where the weight $k + \frac{r+1}{2}$ is odd, which provides the isomorphisms
\begin{equation*}
  J_{k+\frac{r+1}{2},D_r}^{\text{cusp}} \cong \eta^{24-3r} M_{k-12+\frac{3r+1}{2}}(1)  \cong  S_{2k}^{\text{new}, \epsilon_2}(2) 
\end{equation*}
and from Theorem~\ref{thm:even_weight_case} for the case where the weight $k+\frac{r+1}{2}$ is even, which gives
\begin{equation*}
  J_{k+\frac{r+1}{2},D_r}^{\text{cusp, new}} \cong S_{k+\frac12}^{\text{new},+,-r}(8)  \cong  S_{2k}^{\text{new}, \epsilon_2}(2).
\end{equation*}
\end{proof}
We note that if $k + \frac{r+1}{2} \equiv 1 \!\! \mod 2$, it follows from Theorem~\ref{thm:J_decom}
that
$J_{k+\frac{r+1}{2},D_r}^{\text{old}} = \{ 0 \}$ and thus
$J_{k+\frac{r+1}{2},D_r}^{\text{cusp}} = J_{k+\frac{r+1}{2},D_r}^{\text{cusp, new}} $.

We are now ready to prove Theorem~\ref{thm:main}. 

\begin{proof}[Proof of Theorem~\ref{thm:main}]
Theorem~\ref{thm:main} follows from the decomposition established in Theorem~\ref{thm:J_decom} 
\begin{equation*}
 J_{k+\frac{r+1}{2},D_r} = J_{k+\frac{r+1}{2},D_r}^{\text{new}} \oplus J_{k+\frac{r+1}{2},D_r}^{\text{old}} 
\cong J_{k+\frac{r+1}{2},D_r}^{\text{new}} \oplus M_{2k}^{\epsilon_1}(1)  
\end{equation*}
together with the isomorphism from Theorem~\ref{thm:J_new}: 
\begin{equation*}
 J_{k+\frac{r+1}{2},D_r}^{\text{new}} \cong S_{2k}^{\epsilon_2}(2).
\end{equation*}
Combining these results yields the assertion.
\end{proof}

\subsection{Proof of Theorem~\ref{thm:k_1}}\label{ss:proof_thm_k_1}
In this subsection, we provide the proof of Theorem~\ref{thm:k_1}, which is an analogue of Theorem~\ref{thm:J_new} for the case $k=1$.
\begin{proof}[Proof of Theorem~\ref{thm:k_1}]
First, we establish the following identities: 
\begin{equation}\label{id:dim_k_1_minus}
  \dim J_{2,D_1} = \dim J_{3,D_3} = \dim M_{3/2}^{+,-1}(8) = \dim \eta^{15} M_{-6}(1) = \dim M_{2}^-(2) = 0.
\end{equation}
The vanishing of $J_{2,D_1}$ and $J_{3,D_3}$ was shown in \cite{BS} (see also \cite[Thm. 3.29]{Mocanu}).
By Proposition~\ref{prop:J_M_even}, we have $\dim M_{3/2}^{+,-1}(8) = \dim J_{2,D_1} = 0$.
The identities $\dim (\eta^{15} M_{-6}(1)) = \dim  M_{-6}(1) = 0$ are trivial.
Furthermore, one can easily verify that $\dim M_2(2) = \dim M_2^{+}(2) = 1 $ and $\dim M_2^{-}(2) = 0$.
Thus, (\ref{id:dim_k_1_minus}) holds.

To prove the remaining assertion in Theorem~\ref{thm:k_1}, 
we calculate the Hecke eigenvalues for $\eta^3 \in \eta^3 M_0(1)$ and$E_{4,D_5} \in J_{4,D_5}$.

It is well-known that $\eta^3$ has the Fourier expansion $\eta^3(\tau) =  \sum_{n=1}^\infty \left( \frac{-4}{n} \right) n q^{n^2/8}$.
A direct computation then yields
\begin{equation*}
  \eta^3| \tilde{T}(p^2) = (1+p) \eta^3
\end{equation*}
for any odd prime $p$,
where $\tilde{T}(p^2)$ is the twisted  Hecke operator defined in $\S$\ref{ss:Hecke_op}. 
Since the Eisenstein series $E_2^{(2)}$ of weight $2$ and level $2$ also has the Hecke eigenvalue $1+p$ 
for any odd prime $p$, we obtain an isomorphism of Hecke modules $\C \eta^3 \cong \C E_2^{(2)}$.

The identity $\dim J_{4,D_5} = 1$ was shown by \cite{BS} (see~\cite[Thm. 3.29]{Mocanu}).
Since $E_{4,D_5}$ belongs to $J_{4,D_5}$, it is necessarily a Hecke eigenform.
The constant term of $E_{4,D_5}$ is non-zero, thus its eigenvalue can be determined
by computing the constant term of $E_{4,D_5}|T^J(p)$ (see $\S$\ref{ss:Hecke_op} for the definition of $T^J(p)$). 
A straightforward calculation then shows that 
\begin{equation*}
 E_{4,D_5} |T^J(p) = (1+p) E_{4,D_5}
\end{equation*}
for any odd prime $p$.
Consequently, we have  $\C E_{4,D_5} \cong \C E_2^{(2)}$ as Hecke modules.

By Proposition~\ref{prop:J_M_odd},
we obtain the identities 
\begin{equation*}
\dim J_{5,D_7}^{\text{cusp}} = \dim (\eta^3 M_0(1)) = \dim M_{2}^{+}(2) = 1
\end{equation*}
and the isomorphisms 
\begin{equation*}
  \C \psi_{5,D_7} \cong \C \eta^3 \cong \C E_2^{(2)}
\end{equation*}
as Hecke modules, where $\psi_{5,D_7}$ is a non-zero element of $J_{5,D_7}^{\text{cusp}}$.

The identity $\dim J_{4,D_5} = 1$ was established by \cite{BS} (see also \cite[Thm.~3.29]{Mocanu}).
By Proposition~\ref{prop:J_M_even}, we have 
an isomorphism $M_{3/2}^{+,-5}(8) \cong J_{4,D_5}$ as Hecke modules.
Since $\dim J_{4,D_5}^{\text{old}} = \dim M_2^{-}(1) = 0$,
it follows that $J_{4,D_5} = J_{4,D_5}^{\text{new}}$, 
and hence $M_{3/2}^{+,-5}(8) \cong J_{4,D_5}^{\text{new}}$.
Recalling that $E_{3/2}^{(8)} \in M_{3/2}^{+,-5}(8)$ and $E_{4,D_5} \in J_{4,D_5}^{\text{new}}$, 
we obtain $\C E_{3/2}^{(8)} \cong \C E_{4,D_5}$ as Hecke modules.

Furthermore, noting that $J_{5,D_7}^{\text{cusp}} = J_{5,D_7}^{\text{cusp, new}}$ by Theorem~\ref{thm:J_decom}, we conclude that
\begin{equation*}
  \dim J_{4,D_5}^{\text{new}} = \dim J_{5,D_7}^{\text{cusp, new}} 
  = \dim M_{3/2}^{+,-5}(8)   
  = \dim (\eta^3 M_0(1)) = \dim M_{2}^{+}(2) = 1
\end{equation*}
and
\begin{equation*}
   \C E_{4,D_5} \cong  \C \psi_{5,D_7} \cong \C E_{3/2}^{(8)} \cong  \C \eta^3 \cong \C E_2^{(2)}
\end{equation*}
as Hecke modules.
\end{proof}

\section{Fourier coefficients of Jacobi--Eisenstein series}\label{sec:Fourier_coeff_Jacobi_Eisenstein}

In this section, we provide an explicit formula for the Fourier coefficients of the Jacobi--Eisenstein series 
$E_{k+\frac{r+1}{2},D_r} \in J_{k+\frac{r+1}{2},D_r}$
for $k \geq 2$ (see Definition~\ref{def:Jacobi_Eisenstein} and Theorem~\ref{thm:Jacobi_Eisenstein}),
and we prove Theorem~\ref{thm:E_3_2_8}.

\subsection{Cohen-type Eisenstein series of level $8$}\label{ss:cohen_type_level_8}
Let $\mathscr{H}_k$ ($k \in \mathbb{N}$) be the Cohen--Eisenstein series of level $4$ and weight $k+\frac12$ 
defined by
\begin{equation*}
  \mathscr{H}_k(\tau) := \zeta(1-2k) + \sum_{N>0} H(k,N) q^N
\end{equation*}
(see \cite[pp.~272--273]{cohen} for the definition of $H(k,N)$).
If $k \geq 2$, then $ \mathscr{H}_k \in M_{k+\frac12}^{+}(4)$.

Suppose $N$ is written as $(-1)^k N = D f^2$, where $D$ is a fundamental discriminant and $f \in \mathbb{N}$.
Then $H(k,N)$ satisfies
\begin{equation}\label{id:H_N_Dff}
  H(k,N) = H(k,|D|) \sum_{d|f} \mu(d) \left( \frac{D}{d} \right) d^{k-1} \sigma_{2k-1}(f/d)
\end{equation}
and $H(k,|D|) = L(1-k,\left(\frac{D}{\cdot}\right))$, where $L(s,\chi)$ is the Dirichlet L-function.

Let $r \in \{1,3,5,7 \}$. We assume that $(-1)^k \equiv -r \mod 4$.
Specifically, 
if $k$ is odd, then $r \in \{1,5\}$,  and if $k$ is even, then $r \in \{3,7\}$.
Note that $k + \frac{r+1}{2} \in 2\Z$.

We define the Cohen-type Eisenstein series of level $8$ by
\begin{equation*}
  \mathscr{H}^*_{r,k}(\tau) := \zeta(1-2k) + \sum_{N > 0} H^*_r(k,N) q^N ,
\end{equation*}
where we set
\begin{equation*}
  H^*_r(k,N) :=  H(k,N) + \frac{H(k,N)  - H(k,4N)}{2^{k-1} \left( \left( \frac{8}{r}\right)  + 2^k \right)} .
\end{equation*}

By definition, we have
\begin{equation*}
2^{k-1} \left( \left( \frac{8}{r}\right)  + 2^k \right) \mathscr{H}_{r,k}^* =  \left( 1 + \left( \frac{8}{r}\right) 2^{k-1} + 2^{2k-1} \right) \mathscr{H}_k - \mathscr{H}_k|U_k(4),
\end{equation*}
where the operator $U_k(4)$ is defined in~$\S$\ref{ss:half_3_2_and_2}.
Since $\mathscr{H}_k|U_k(4) \in M_{k+\frac12}^{+}(8)$ for $k \geq 2$ follows from~\cite[Prop.~6]{UY},
it follows that $\mathscr{H}_{r,k}^* \in M_{k+\frac12}^{+}(8)$ for $k \geq 2$.

The vector space $M_{k+\frac12}^{+,-r}(8)$ was defined in~$\S$\ref{ss:half_int_theta}.

\begin{prop}\label{prop:H_r_k_in}
Assume $k+\frac{r+1}{2}$ is even with $k \geq 2$.
Then we have
\begin{equation*}
  M_{k+\frac12}^{+,-r}(8) = \C \mathscr{H}^*_{r,k} \oplus S_{k+\frac12}^{+,-r}(8).
\end{equation*}
In particular, $\mathscr{H}_{r,k}^* \in M_{k+\frac12}^{+,-r}(8)$.
\end{prop}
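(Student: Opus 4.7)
My plan is to first verify the ``in particular'' assertion $\mathscr{H}_{r,k}^{*}\in M_{k+\frac12}^{+,-r}(8)$ by a direct Fourier-coefficient computation, and then to deduce the orthogonal decomposition from the isomorphism with Jacobi forms established in Proposition~\ref{prop:J_M_even}.

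For the containment, I start from the identity
$$2^{k-1}\left(\left(\frac{8}{r}\right)+2^{k}\right)\mathscr{H}_{r,k}^{*}=\left(1+\left(\frac{8}{r}\right)2^{k-1}+2^{2k-1}\right)\mathscr{H}_{k}-\mathscr{H}_{k}\,|\,U_{k}(4)$$
displayed just above the statement. Membership in $M_{k+\frac12}^{+}(8)$ is then immediate from $\mathscr{H}_{k}\in M_{k+\frac12}^{+}(4)\subset M_{k+\frac12}^{+}(8)$ together with $\mathscr{H}_{k}\,|\,U_{k}(4)\in M_{k+\frac12}^{+}(8)$, the latter coming from~\cite[Prop.~6]{UY}. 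It remains to show that $H_{r}^{*}(k,N)=0$ for $N$ in the unique residue class modulo $8$ that is allowed by the plus-space condition but forbidden by the subspace $M_{k+\frac12}^{+,-r}(8)$: namely $N\equiv -r'\bmod 8$ where $r'\in\{1,3,5,7\}$ satisfies $r'\equiv r\bmod 4$ and $r'\neq r$.

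The main computational step is therefore to compare $H(k,N)$ and $H(k,4N)$ for such a wrong-residue $N$. Since $N$ is odd, writing $(-1)^{k}N=Df^{2}$ with $D$ fundamental forces both $D$ and $f$ to be odd. Splitting divisors of $2f$ as $d$ and $2d$ with $d\mid f$, and applying $\mu(2d)=-\mu(d)$, $\left(\frac{D}{2d}\right)=\left(\frac{D}{2}\right)\left(\frac{D}{d}\right)$, and $\sigma_{2k-1}(2f/d)=(1+2^{2k-1})\sigma_{2k-1}(f/d)$ (valid because $f/d$ is odd), the multiplicative formula~(\ref{id:H_N_Dff}) should yield
$$H(k,4N)=\left(1-\left(\frac{D}{2}\right)2^{k-1}+2^{2k-1}\right)H(k,N),$$
whence
$$H_{r}^{*}(k,N)=H(k,N)\cdot\frac{\left(\frac{8}{r}\right)+\left(\frac{D}{2}\right)}{\left(\frac{8}{r}\right)+2^{k}}.$$
A short case analysis over the parity of $k$ and the four relevant pairs $(r,r')$ then gives $\left(\frac{D}{2}\right)=-\left(\frac{8}{r}\right)$ in every wrong-residue case (for instance, $r=1$ with $k$ odd forces $D\equiv 5\bmod 8$, whereas $r=5$ with $k$ odd forces $D\equiv 1\bmod 8$), killing the numerator.

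For the decomposition itself, Proposition~\ref{prop:J_M_even} supplies an isomorphism $\mathcal{J}_{r,k}^{even}\colon J_{k+\frac{r+1}{2},D_{r}}\xrightarrow{\cong}M_{k+\frac12}^{+,-r}(8)$ that carries $J_{k+\frac{r+1}{2},D_{r}}^{cusp}$ isomorphically onto $S_{k+\frac12}^{+,-r}(8)$. Lemma~\ref{lem:dim_Eisenstein} bounds the codimension of Jacobi cusp forms in $J_{k+\frac{r+1}{2},D_{r}}$ by one, and under our hypothesis $k+\frac{r+1}{2}$ even with $k\geq 2$ the Jacobi-Eisenstein series of~\cite[Thm.~4.1]{Mocanu_JN} exhibits an element of $J_{k+\frac{r+1}{2},D_{r}}\setminus J_{k+\frac{r+1}{2},D_{r}}^{cusp}$, so the codimension is exactly one. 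Since $\mathscr{H}_{r,k}^{*}$ has constant term $\zeta(1-2k)\neq 0$, it lies in $M_{k+\frac12}^{+,-r}(8)\setminus S_{k+\frac12}^{+,-r}(8)$ and therefore spans the one-dimensional complement, giving the claimed direct-sum decomposition (the intersection being trivial for the same non-vanishing-constant-term reason). The hardest step of this plan is the closed-form identity expressing $H(k,4N)$ in terms of $H(k,N)$ and $\left(\frac{D}{2}\right)$; once that is secured, the rest is bookkeeping with Kronecker symbols plus an appeal to Proposition~\ref{prop:J_M_even}.
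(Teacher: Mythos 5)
Your proposal is correct and follows essentially the same route as the paper: you establish $\mathscr{H}^*_{r,k}\in M_{k+\frac12}^{+}(8)$ from the $U_k(4)$-identity and \cite[Prop.~6]{UY}, kill the coefficients in the residue class $N\equiv 4-r \bmod 8$ via the relation $H(k,4N)=\bigl(1+2^{2k-1}-\bigl(\tfrac{8}{(-1)^kN}\bigr)2^{k-1}\bigr)H(k,N)$ deduced from (\ref{id:H_N_Dff}), and then obtain the decomposition from Proposition~\ref{prop:J_M_even}, Lemma~\ref{lem:dim_Eisenstein} and the nonvanishing constant term of $\mathscr{H}^*_{r,k}$. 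The only differences are cosmetic: you evaluate the Kronecker symbol by a case analysis on $D\bmod 8$ where the paper uses the congruence $((-1)^kN)^2\equiv r^2-8\bmod 16$, and your appeal to the Jacobi--Eisenstein series to pin the codimension at exactly one is harmless but not needed.
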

\begin{proof}
Suppose $N \equiv 4 - r  \! \! \mod 8$.
Since $((-1)^k N)^2 \equiv r^2 - 8  \! \! \mod 16 $, we obtain 
$\left( \frac{8}{(-1)^k N} \right) = (-1)^{(((-1)^k N)^2 -1)/8} = (-1)^{(r^2-9)/8} = - \left( \frac{8}{r} \right)$.
By virtue of the property (\ref{id:H_N_Dff}), we have
\begin{equation}\label{id:H_4N_H_N}
  H(k,4N) = \left(1 + 2^{2k-1} - \left( \frac{8}{(-1)^k N}\right) 2^{k-1} \right) H(k,N).
\end{equation}
It then follows that $H^*_r(k,N) = 0 $ for $N \equiv 4 - r \!\! \mod 8$.
Combined with the fact that $\mathscr{H}_{r,k}^* \in M_{k+\frac12}^{+}(8)$, 
this implies $\mathscr{H}_{r,k}^* \in M_{k+\frac12}^{+,-r}(8)$.

Furthermore, by Proposition~\ref{prop:J_M_even} and Lemma~\ref{lem:dim_Eisenstein}, we have
\begin{equation*}
 \dim M_{k+\frac12}^{+,-r}(8) - \dim S_{k+\frac12}^{+,-r}(8) =  \dim J_{k+\frac{r+1}{2},D_r} - \dim J_{k+\frac{r+1}{2},D_r}^{\text{cusp}}
  \leq 1.
\end{equation*}
Since $\mathscr{H}_{r,k}^*  \notin  S_{k+\frac12}^{+,-r}(8)$
(as its constant term is non-zero),
the assertion follows.
\end{proof}

\begin{lemma}\label{lem:H*_formula}
 Let $N$ be a positive-integer such that $N \equiv 0, 4, -r  \!\! \mod 8$.
 There exists $(n', r') \in \Z \times D_r^\sharp$ such that $N = 8 (n' - \beta(r'))$.
 Write $(-1)^k N = D f^2$, where $D$ is a fundamental discriminant and $f \in \mathbb{N}$.

\begin{itemize}
 \item[(i)]
 If $f$ is odd, then
 \begin{equation}\label{id:e_r_k_odd}
   H^*_r(k,N) = 
     \frac{ \left( 1 + \left( \frac{8}{r} \right) \left( \frac{8}{D} \right) \right) H(k,N)}{\left( 1 + \left( \frac{8}{r}\right) 2^k \right) }.
 \end{equation}
 \item[(ii)]
 If $f$ is even, then
 \begin{equation}\label{id:e_r_k_even}
   H^*_r(k,N) = 
     \frac{H(k,N) + \left( \frac{8}{r}\right) 2^k H(k,\frac{N}{4}) }{\left( 1 + \left( \frac{8}{r}\right) 2^k \right) }.
 \end{equation}
 \end{itemize}
 Note that if $N \equiv -r \!\! \mod 8$, then $f$ is odd and 
 $\left( \frac{8}{D} \right) = \left( \frac{8}{(-1)^k N} \right) = \left( \frac{8}{r} \right) $.
 Furthermore, if $N \equiv 0 \!\! \mod 4$, then $D \equiv 0 \!\! \mod 4$ or  $f$ is even.
\end{lemma}
\begin{proof}
The formula (\ref{id:e_r_k_odd}) follows from 
the identity (\ref{id:H_4N_H_N}) established in the proof of Proposition~\ref{prop:H_r_k_in}, 
combined with the assumption $f \equiv 1 \!\! \mod 2$.

The formula (\ref{id:e_r_k_even}) follows from 
the identity 
\begin{equation*}
 \left( 1 +  2^{2k-1} \right) H(k,N)- H(k,4N) 
 =  2^{2k-1} H\!\left(k,\frac{N}{4}\right),
\end{equation*}
which holds under the assumption $f \equiv 0 \!\! \mod 2$.
\end{proof}

\subsection{Jacobi--Eisenstein series}

\begin{df}\label{def:Jacobi_Eisenstein}
For $k \geq 2$ and $r \in \{1,3,5,7 \}$ such that $k+\frac{r+1}{2}$ is even, we define
the Jacobi--Eisenstein series of weight $k+\frac{r+1}{2}$ and index $D_r$ by
\begin{equation*}
  E_{k+\frac{r+1}{2},D_r}(\tau,z) := \sum_{A \in SL(2,\Z)/\Gamma_\infty} \sum_{\lambda \in D_r} 1 |_{k,D_r} (\lambda,0) |_{k,D_r} A,
\end{equation*}
where $\Gamma_\infty := \left\{ \pm \left( \begin{smallmatrix} 1 & n \\ 0 & 1 \end{smallmatrix} \right) \, : \, n \in \Z \right\} $.
\end{df}
For the convergence of $E_{k+\frac{r+1}{2},D_r}$, we refer the reader to \cite[Prop. 3.3.6]{ali} or \cite[Thm. 2.6]{Mocanu}.

Note that the cases $(r,k) = (5,1)$ and $(r,k) = (7,0)$ for $E_{4,D_5}$ and $E_{4,D_7}$ are not included in Definition~\ref{def:Jacobi_Eisenstein}.
These functions, $E_{4,D_5}$ and $E_{4,D_7}$, are instead constructed using theta functions (see, for example, \cite[$\S$3.3.1]{Mocanu}; 
for the Fourier coefficients of $E_{4,D_5}$, see also Theorem~\ref{thm:Jacobi_Eisenstein_k_1}).

We now provide an explicit formula for the Fourier coefficients of
Jacobi--Eisenstein series $E_{k+\frac{r+1}{2},D_r}$ of index $D_r$ for $k \geq 2$.
Regarding the notation $\beta(r')$, we refer the reader to $\S$\ref{sec:decom}.

\begin{theorem}\label{thm:Jacobi_Eisenstein}
Assume that the weight $k + \frac{r+1}{2}$ is an even integer with $k \geq 2$.
The Jacobi--Eisenstein series $E_{k+\frac{r+1}{2},D_r} \in J_{k+\frac{r+1}{2},D_r}$ has the Fourier expansion:
 \begin{equation*}
   E_{k+\frac{r+1}{2},D_r}(\tau,z)
   =
   \sum_{n' \in \Z} \sum_{r' \in D_r^\sharp}
   e_{r,k}(n',r') e(n'\tau + \beta(r',z)),
 \end{equation*}
 where we put $N = 8 (n' - \beta(r'))$ and
 \begin{equation}\label{id:e_r_k}
   e_{r,k}(n',r') = \begin{cases}
     1, & \mbox{ if } N = 0, \\
     \displaystyle{ \frac{H(k,N) + \left( \frac{8}{r}\right) 2^k H(k,\frac{N}{4})}{ \zeta(1-2k) \left( 1 + \left( \frac{8}{r}\right) 2^k \right) } } , & \text{ if } N  > 0 
     \text{ and } N \equiv 0, 4, -r \mod 8, \\
     0 , & \text{otherwise} .
   \end{cases}
 \end{equation}
 Note that $H(k,N) = 0$ if  $N \notin \Z_{\geq 0}$ or $(-1)^k N {\not \equiv} 0,1 \!\! \mod 4$.
\end{theorem}
\begin{proof}
The function $E_{k+\frac{r+1}{2},D_r}$ (resp. $\mathscr{H}_{r,k}^*$) is an eigenform
for the Hecke operator $T^J(p)$ (resp. $T(p^2)$) for any odd prime $p$
with the eigenvalue $1+p^{2k-1}$ (see, e.g., \cite[Thm.3.3.18]{ali}).
By virtue of Theorems~\ref{thm:J_decom} and \ref{thm:J_new},
both $E_{k+\frac{r+1}{2},D_r}$ and $\mathscr{H}_{r,k}^*$ correspond to $E_{2k}$,
where $E_{2k}$ denotes the usual Eisenstein series in $M_{2k}(1)$ with the constant term $1$.
Therefore, $E_{k+\frac{r+1}{2},D_r}$ and $\mathscr{H}_{r,k}^*$ correspond to each other
via the isomorphism in Proposition~\ref{prop:J_M_even}.
This implies that $\mathcal{J}_{r,k}^{\text{even}}(E_{k+\frac{r+1}{2},D_r}) = c\mathscr{H}_{r,k}^*$
for some $c \in \C$.
By comparing the constant terms, we find $c = \zeta(1-2k)^{-1}$.
The specific formula for $e_{r,k}(n',r')$ follows from the definition of the map
$\mathcal{J}_{r,k}^{\text{even}}$ (see (\ref{id:J_r_k_even_def})).
It is
 \begin{equation*}
   e_{r,k}(n',r') = \begin{cases}
     1, & \mbox{ if } N = 0, \\
     \displaystyle{ \frac{H^*_r(k, N)}{ \zeta(1-2k)} } , & \text{ if } N  > 0 
     \text{ and } N \equiv 0 \mod 4, \\
     \displaystyle{ \frac{H^*_r(k,  N)}{2 \zeta(1-2k)}} , & \text{ if } N  > 0 
     \text{ and } N \equiv -r \mod 8,
   \end{cases}
 \end{equation*}
  where $N = 8 (n' - \beta(r'))$.
  By combining this and Lemma~\ref{lem:H*_formula}, the formula (\ref{id:e_r_k}) follows.
\end{proof}

Recall that $\dim J_{4,D_5} = 1$.
By Theorem~\ref{thm:k_1}, 
there exists $E_{4,D_5} \in J_{4,D_5}$ that corresponds to $E_2^{(2)}$.
Such a function was defined in~\cite{Mocanu}, 
where an explicit formula for its Fourier coefficients is provided \cite[pp.~76--77]{Mocanu}.
We quote this result below.
Using the identity
\begin{equation*}
  \#\left\{ (x_1,x_2,x_3) \in \Z^3 \, : \, 
  -2D =  
  \sum_{i=1}^3( x_i^2 + x_i)
  + \frac34 \right\}
  =
  r_3(-8D)
\end{equation*}
for $-8D \in \mathbb{N}$ with $-8D \equiv 3 \mod 8$, 
we have the following theorem.
\begin{theorem}[\cite{BS,Mocanu}]\label{thm:Jacobi_Eisenstein_k_1}
Let $E_{4,D_5}(\tau,z) = \sum_{n',r'} e_{5,1}(n',r') e(n'\tau + \beta(r',z))$
be the Fourier expansion of $E_{4,D_5} \in J_{4,D_5}$.
Then,
\begin{equation*}
  e_{5,1}(n',r') = \begin{cases}
    r_3(N), & \text{if $N   \equiv 0 \!\! \mod 4$}, \\
    \frac12 r_3(N), & \text{if $N  \equiv 3 \mod 8$}, \\
    0 & \text{otherwise},
  \end{cases}
\end{equation*}
where $N = 8(n' - \beta(r'))$.
\end{theorem}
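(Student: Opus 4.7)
The plan is to derive this formula as a direct consequence of the isomorphism $\C E_{4,D_5} \cong \C E_{3/2}^{(8)}$ established in Theorem~\ref{thm:k_1}, combined with the explicit Fourier expansion of $E_{3/2}^{(8)}$ provided by Theorem~\ref{thm:E_3_2_8}. Since $\dim J_{4,D_5} = 1$ (by \cite{BS}, cf.~\cite[Thm.~3.29]{Mocanu}), the Jacobi form $E_{4,D_5}$ is determined up to a nonzero scalar, so it suffices to pin down the normalization.

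First I would invoke Proposition~\ref{prop:J_M_even}, which gives the isomorphism $\mathcal{J}_{5,1}^{even}:J_{4,D_5}\xrightarrow{\cong}M_{3/2}^{+,-5}(8)$. Since $M_{3/2}^{+,-5}(8)$ is one-dimensional and spanned by $E_{3/2}^{(8)}$, we have $\mathcal{J}_{5,1}^{even}(E_{4,D_5}) = c\, E_{3/2}^{(8)}$ for a unique scalar $c \in \C^{\times}$. The constant term of $E_{3/2}^{(8)}$ equals $r_3(0) = 1$ by Theorem~\ref{thm:E_3_2_8}, and from the construction of the theta decomposition together with (\ref{id:J_r_k_even_def}), the constant term of $\mathcal{J}_{5,1}^{even}(E_{4,D_5})$ equals the constant term of the theta component $h_0$, which in turn is $e_{5,1}(0,v_0)$. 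Thus we fix the normalization of $E_{4,D_5}$ (consistent with \cite{BS,Mocanu}) by taking $e_{5,1}(0,v_0) = 1$, which forces $c = 1$ and hence $\mathcal{J}_{5,1}^{even}(E_{4,D_5}) = E_{3/2}^{(8)}$.

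Next I would invert the map $\mathcal{J}_{5,1}^{even}$ explicitly using the formulas in Lemma~\ref{lem:J_even_surj} and in the proof of Lemma~\ref{lem:J_even_hecke}. Writing $E_{3/2}^{(8)}(\tau) = \sum_N A(N) q^N$, these give
\begin{eqnarray*}
 e_{5,1}(n',r') \;=\; \begin{cases} A(N), & r' \equiv v_0,v_2 \mod D_5, \\ \tfrac12 A(N), & r' \equiv v_1,v_3 \mod D_5, \end{cases}
\end{eqnarray*}
with $N = 8(n'-\beta(r'))$. Recalling $\beta(v_0)=0$, $\beta(v_2)=\tfrac12$, $\beta(v_1)=\beta(v_3)=\tfrac58$, the condition $r' \equiv v_0,v_2 \bmod D_5$ is equivalent to $N \equiv 0 \bmod 4$, while $r' \equiv v_1,v_3 \bmod D_5$ is equivalent to $N \equiv 3 \bmod 8$. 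Substituting the explicit values $A(N) = r_3(N)$ for $N \equiv 0,3 \bmod 4$ from Theorem~\ref{thm:E_3_2_8} yields exactly the formula claimed.

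The main obstacle is the bookkeeping around the normalization of $E_{4,D_5}$: because $E_{4,D_5}$ cannot be defined by Definition~\ref{def:Jacobi_Eisenstein} (the case $k=1$ is excluded), one has to compare the explicit construction in \cite{BS,Mocanu} with the normalization coming from $r_3(0)=1$. Here the combinatorial identity
\begin{eqnarray*}
\#\{(x_1,x_2,x_3)\in\Z^3: -2D = \textstyle\sum_{i=1}^3(x_i^2+x_i) + \tfrac34\} = r_3(-8D)
\qquad (-8D\in\mathbb{N},\ -8D\equiv 3 \bmod 8)
\end{eqnarray*}
does the work: writing $\sum(x_i^2+x_i)+\tfrac34 = \sum(x_i+\tfrac12)^2$ turns the half-integral theta series underlying the $v_1,v_3$-components of $E_{4,D_5}$ (as constructed in~\cite{BS,Mocanu}) into a sum over odd triples representing $N=-8D$, which is $r_3(N)$ since $N\equiv 3\bmod 8$ forces all three squares to be odd. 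This identity both verifies consistency with the construction in \cite{BS,Mocanu} and eliminates the need for any nontrivial calculation beyond what has already been done.
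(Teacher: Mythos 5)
Your route is genuinely different from the paper's: the paper does not derive Theorem~\ref{thm:Jacobi_Eisenstein_k_1} from $E_{3/2}^{(8)}$ at all, but simply quotes the explicit theta-series construction of $E_{4,D_5}$ and its Fourier coefficients from \cite{BS} and \cite[PP. 76--77]{Mocanu}, corrects a misprint there (the relevant lattice count must be halved because of the parity condition $\sum_{i=1}^{8} x_i \in 2\Z$ in $E_8$), and rewrites the result as $r_3(N)$ via the displayed counting identity. The problem with your argument as written is circularity inside the paper's logical order: the input you take from Theorem~\ref{thm:E_3_2_8}, namely that $E_{3/2}^{(8)} = \theta^3|U_1(4)$ so that $A(N) = r_3(N)$, is proved in \S\ref{ss:E_3_2_8} precisely by invoking Theorem~\ref{thm:Jacobi_Eisenstein_k_1} (together with Proposition~\ref{prop:J_M_even}) to identify $\theta^3|U_1(4) = \mathcal{J}_{5,1}^{even}(E_{4,D_5}) \in M_{3/2}^{+,-5}(8)$. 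So you are assuming the very Fourier expansion you set out to prove. The gap is repairable: one can put $\theta^3|U_1(4)$ into $M_{3/2}^{+,-5}(8)$ without Theorem~\ref{thm:Jacobi_Eisenstein_k_1}, since $\theta^3 \in M_{3/2}(4)$ gives $\theta^3|U_1(4) \in M_{3/2}^{+}(8)$ by \cite[Prop. 6]{UY}, and $r_3(N) = 0$ for $N \equiv 7 \bmod 8$ forces it into the $-5$ component; then $\dim M_{3/2}^{+,-5}(8) = \dim J_{4,D_5} = 1$ and the constant term $r_3(0) = 1$ identify it with $E_{3/2}^{(8)}$, after which your inversion of $\mathcal{J}_{5,1}^{even}$ via Lemma~\ref{lem:J_even_surj} (with the congruence bookkeeping $r' \equiv v_0, v_2$ iff $N \equiv 0 \bmod 4$, $r' \equiv v_1, v_3$ iff $N \equiv 3 \bmod 8$, and the factor $\tfrac12$) goes through exactly as you describe. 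But this independent step must be made explicit; citing Theorem~\ref{thm:E_3_2_8} as a black box is not admissible here.

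A second, smaller point concerns normalization. Since $\dim J_{4,D_5} = 1$, your argument determines the coefficient system only up to a scalar, while the theorem is a statement about the specific $E_{4,D_5}$ of \cite{BS,Mocanu}; the value $e_{5,1}(0,v_0) = 1$ therefore has to be read off from that construction rather than imposed by fiat. Your closing paragraph, which compares with the explicit construction via the counting identity, is in substance the paper's own proof — and there one needs the halving coming from the condition $\sum_{i=1}^{8} x_i \in 2\Z$ (the misprint the paper corrects in \cite[P. 77]{Mocanu}); without that correction the comparison is off by a factor of $2$.
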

We remark that there is a misprint in the formula for $C_{4,n}(D,r)$ in \cite[p.~77]{Mocanu}.
Due to the parity condition $\sum_{i=1}^8 x_i \in 2\Z$ defining the $E_8$ lattice,
the term 
\[
  \#\left\{ x \in \Z^{8-n} \, : \, 
  -2D =  x_1^2 + x_1 + \cdots +  x_{8-n}^2 + x_{8-n}  
  + \frac{8-n}{4} \right\}
\]
in \cite[p.~77]{Mocanu} must be halved.

We will show the identity $r_3(N) = 12 (H(1,4N) - 2 H(1,N))$ in $\S$\ref{ss:E_3_2_8}.
By virtue of this identity,
and noting that $\zeta(-1) = -\frac{1}{12}$ and $H^*_5(1,N) = 2H(1,N) - H(1,4N) = - \frac{1}{12} r_3(N)$, 
it follows from Theorem~\ref{thm:Jacobi_Eisenstein_k_1}
that Theorem~\ref{thm:Jacobi_Eisenstein} remains valid for the case $(r,k) = (5,1)$.

\subsection{Zagier-type Eisenstein series of level $8$}\label{ss:E_3_2_8}
We now consider the case $k=1$ for the Cohen-type Eisenstein series.
Let
\begin{equation*}
  \mathscr{H}_1(\tau) := -\frac{1}{12} + \sum_{N>0} H(N) q^N,
\end{equation*}
where we set $H(N) := H(1,N)$.
We define the Zagier--Eisenstein series $\mathscr{F}$ of weight $3/2$ and level $4$ by
\begin{equation*}
  \mathscr{F}(\tau) :=
   \mathscr{H}_1(\tau) + \frac{1}{16 \pi \sqrt{v}} \sum_{n \in \Z} \alpha(n^2 y) q^{-n^2},
\end{equation*}
where $v = \mbox{Im}(\tau)$ and $\alpha(t) := \int_1^{\infty} e^{-4 \pi u t} u^{-3/2} \, du$.
A straightforward calculation shows that
\begin{equation*}
  \mathscr{F}|U_1(4) - 2 \mathscr{F}
  =
  \mathscr{H}_1|U_1(4) - 2 \mathscr{H}_1
  \ = \
  \frac{1}{12} + \sum_{\begin{smallmatrix} N > 0 \\ N \equiv 0, 3 \!\! \mod 4  \end{smallmatrix}} (H(4N) - 2 H(N)) q^N.
\end{equation*}
We then define the level $8$ Zagier-type Eisenstein series 
$  \mathscr{H}_1^{*}(\tau) :=  \mathscr{H}_{5,1}^{*}(\tau)$ by
\begin{equation*}
  \mathscr{H}_1^{*}(\tau) 
   :=
    - ( \mathscr{F}|U_1(4)   -  2 \mathscr{F})
   = 
  -\frac{1}{12} + 
  \!\!\!\!\!
   \sum_{\begin{smallmatrix} N > 0 \\ N \equiv 0, 3 \!\! \mod 4  \end{smallmatrix}} 
   \!\!\!\!\!
   (2 H(N) -  H(4N)) q^N.
\end{equation*}

We now proceed to the proof of Theorem~\ref{thm:E_3_2_8}.
\begin{proof}[Proof of Theorem~\ref{thm:E_3_2_8}]
First, we establish the identity 
\begin{equation}\label{id:three_squares_H}
  r_3(N) = 12 (H(4N) - 2 H(N))
\end{equation}
for any  $N \in \mathbb{N} \cup \{ 0 \}$, 
where we set $H(N) = 0$
if  $-N \equiv 2, 3 \!\! \mod 4$.

The identity $r_3(4N) = r_3(N)$ is a direct consequence of the definition of $r_3$.
For $N$ such that $-N \equiv 0,1 \!\! \mod 4$,
the property~(\ref{id:H_N_Dff}) implies $H(16N) - 2 H(4N) = H(4N) - 2 H(N)$.
Furthermore,  a straightforward calculation yields
\begin{equation*}
  H(4N) - 2 H(N) = 
  \begin{cases}
  H(N), & \text{if $N \equiv 0 \!\! \mod 4$ and $ \frac{N}{4}\, \equiv \, 1,2  \!\! \mod 4$}, \\
  2H(N), & \text{if $N \equiv 3 \!\! \mod 8$}, \\
  0, & \text{if $N \equiv 7 \!\! \mod 8$}.
  \end{cases}
\end{equation*}
As explained in~\cite[p.~274]{cohen}, the right-hand side of this identity 
equals $\frac{1}{12} r_3(N)$ for $N$ satisfying these conditions.
Therefore, the identity (\ref{id:three_squares_H}) holds 
for all $N$ such that $-N \equiv 0,1 \!\! \mod 4$, .

If $-N \equiv 2,3  \! \! \mod 4$, then $H(4N) - 2 H(N) = H(4N) = \frac{1}{12} r_3(4N) = \frac{1}{12} r_3(N)$.
Thus, the identity (\ref{id:three_squares_H}) holds for all $N \in \mathbb{N}$.
For $N = 0$, we have $H(4N) - 2 H(N) = - H(0) = \frac{1}{12} = \frac{1}{12} r_3(0)$.
Consequently, the identity (\ref{id:three_squares_H}) holds for all $N \in \mathbb{N} \cup \{0\}$.

We then obtain
\begin{align*}
  (\theta^3|U_1(4))(\tau) &= 
  \sum_{\begin{smallmatrix} N \geq 0 \\ N \equiv 0, 3 \!\! \mod 4 \end{smallmatrix}}
  r_3(N) q^N
  \ = \
  12 \sum_{\begin{smallmatrix} N \geq 0 \\ N \equiv 0, 3 \!\! \mod 4 \end{smallmatrix}}
  (H(4N) - 2 H(N)) q^N \\
  & = 
   -12 \mathscr{H}_1^*(\tau).
\end{align*}
Note that the constant term of $\theta^3|U_1(4)$ is $r_3(0) = 1$.

By Proposition~\ref{prop:J_M_even} and Theorem~\ref{thm:Jacobi_Eisenstein_k_1},
we have
\begin{equation*}
 \theta^3|U_1(4)  = \sum_{N \equiv 0, 3, 4 \!\! \mod 8} r_3(N) q^N = \mathcal{J}_{5,1}^{\text{even}}(E_{4,D_5})  \in M_{3/2}^{+,-5}(8).
\end{equation*}
Since $\dim M_{3/2}^{+,-5}(8) = \dim J_{4,D_5} = 1$
and the constant terms of $\theta^3|U_1(4)$ and $E_{3/2}^{(8)}$ are both equal to $1$,
we conclude $-12 \mathscr{H}_1^*(\tau)   = \theta^3|U_1(4) = E_{3/2}^{(8)} \in M_{3/2}^{+,-5}(8)$.
This completes the proof of Theorem~\ref{thm:E_3_2_8}.

We remark that, since $\theta^3 \in M_{3/2}(4)$,  the fact $\theta^3|U_1(4) \in M_{3/2}^{+}(8)$ also follows from~\cite[Prop. 6]{UY}.
Furthermore, since $r_3(N) = 0$ for $N \equiv 7 \mod 8$, it also follows that $\theta^3|U_1(4) \in M_{3/2}^{+,-5}(8)$.
\end{proof}

\section{Maps from Jacobi forms to elliptic modular forms}\label{sec:S_d_0}
In this section we prove Theorems~\ref{thm:map_J_even} and \ref{thm:map_J_odd},
which provide explicit formulas for the maps from Jacobi forms of index $D_r$ to elliptic modular forms.

Theorems~\ref{thm:map_J_even} and \ref{thm:map_J_odd}  follow essentially from the facts that
the Hecke eigenvalues of $\phi \in J_{k+\frac{r+1}{2},D_r}$ coincide with the Hecke eigenvalues of $f \in M_{2k}^{\text{new}}(N)$ ($N=1,2$),
provided that $\phi$ corresponds to $f$ under Theorem~\ref{thm:J_decom}, \ref{thm:J_new} or \ref{thm:k_1}.
Furthermore, the Hecke eigenvalues of $f$ coincide, up to a constant, with its Fourier coefficients.

\begin{proof}[Proof of Theorem~\ref{thm:map_J_even}]
Assume that $\phi \in J_{k+\frac{r+1}{2},D_r}$ is a Hecke eigenform.
Let $g =\mathcal{J}_{r,k}^{\text{even}}(\phi)  \in M_{k+\frac12}^{+,-r}(8)$ be the corresponding modular form
with the Fourier expansion $g = \sum A(m) q^m $.

For any odd prime $p$ and any discriminant $m$, it follows from (\ref{id:A^**}) that
  \begin{equation}\label{id:A_Hecke_p}
    \lambda_p A(m) =     
    A(p^2 m) + \left( \frac{(-1)^k m }{p}\right) p^{k-1} A(m) + p^{2k-1} A(m/p^2),
  \end{equation}
  where $\lambda_p$ denotes the Hecke eigenvalue of $\phi$ with respect to $T^J(p)$.
  If we assume  $p^2 {\not|} m$, then this identity implies
  \begin{align*}
    &
    \sum_{u=0}^\infty \sum_{l=0}^u p^{l(k-1)} \left( \frac{(-1)^k m}{p^l} \right) A(p^{2(u-l)} m) X^u \\
    &=
    (1- p^{k-1} \left( \frac{(-1)^k m}{p}\right)X)^{-1} \sum_{t=0}^\infty A(p^{2t}m) X^t \\
    &=
    (1- p^{k-1} \left( \frac{(-1)^k m}{p}\right)X)^{-1} \frac{1- p^{k-1} \left( \frac{(-1)^k m}{p}\right)X}{1-\lambda_p X + p^{2k-1} X^2} A(m)\\
    &=
     \frac{A(m)}{1-\lambda_p X + p^{2k-1} X^2}.
  \end{align*}
  Since $\lambda_p$ is the Hecke eigenvalue of $\phi$,  
  it is also the Hecke eigenvalue of a modular form  
  $f \in M_{2k}^{\text{new},\epsilon_2}(2) \oplus M_{2k}^{\epsilon_1}(1)$ 
  corresponding to $\phi$ via the isomorphism in Theorem~\ref{thm:main}. 
  Without loss of generality, let $f = \sum a_f(n) q^n$ be normalized such that $a_f(1) = 1$. 
  Then we have
  \begin{equation*}
    \frac{A(m)}{1-\lambda_p X + p^{2k-1} X^2} = A(m) \sum_{l=0}^\infty a_f(p^l) X^l.
  \end{equation*}
  Comparing the coefficients of $X^u$, we obtain
  \begin{equation}\label{id:A_sum_p}
    \sum_{l=0}^u p^{l(k-1)} \left( \frac{(-1)^k m}{p^l} \right) A(p^{2(u-l)} m)
    = A(m) a_f(p^u).
  \end{equation}
  Using the multiplicativity of $a_f(n)$, and assuming that $m$ is not divisible by $p^2$ for any odd prime $p$,
  we conclude that
  \begin{equation}\label{id:A_sum_odd}
    \sum_{d | n} d^{k-1} \left( \frac{(-1)^k m}{d} \right) A(n^2 d^{-2} m) 
    = A(m) a_f(n)
  \end{equation}
  for any odd integer $n$.
  
  It remains to verify (\ref{id:A_sum_odd}) for any even integer $n$.
  Since $S_{d_0}$ is a linear map, it is suffices to consider the following three cases:
  \begin{itemize}
  \item[(i)] \quad  $\phi \in J_{k+\frac{r+1}{2},D_r}^{\text{cusp, new}}$,
  \item[(ii)] \quad $\phi \in J_{k+\frac{r+1}{2},D_r}^{\text{cusp, old}}$,
  \item[(iii)] \quad $\phi$ is a Jacobi--Eisenstein series $E_{k+\frac{r+1}{2},D_r}$.
  \end{itemize}
  
  First, we consider case (i), where $\phi \in J_{k+\frac{r+1}{2}, D_r}^{\text{cusp, new}}$,   
  and show that $S_{d_0}(\phi) \in S_{2k}^{\text{new},\epsilon_2}(2)$.
  In this case, the corresponding modular form $f$ belongs to $S_{2k}^{\text{new},\epsilon_2}(2)$.
  Let $g = \mathcal{J}_{r,k}^{\text{even}}(\phi)$. 
  Then $g |U_k(4) = \lambda_2 g$, where 
  the eigenvalue $\lambda_2$ coincides with $a_f(2)$ by Theorem~\ref{thm:UY}.
  By the definition of the $U_k(4)$ operator, it follows that
  $A(4^l m) = \lambda_2 A(4^{l-1} m) = \lambda_2^l A(m) = a_f(2)^l A(m)$
  for any integer $l \geq 1$ and any $m$ satisfying $(-1)^k m \equiv 0,1 \!\! \mod 4$.
  
  By assumption, $(-1)^k d_0 $ is a fundamental discriminant with $d_0 \equiv 0 \mod 4$.
  It follows from the definition of the Kronecker symbol that $\left( \frac{(-1)^k d_0}{2} \right) = \left( \frac{8}{(-1)^k d_0} \right) =  0$.
  Therefore, for any odd integer $n$ and any natural number $l$, we obtain
  \begin{align*}
    a_f(2^l n) A(d_0) &= a_f(2)^l \sum_{d | n} d^{k-1} \left( \frac{(-1)^k d_0}{d} \right) A(n^2 d^{-2} d_0) \\
    &= \sum_{d | 2^l n} d^{k-1}  \left( \frac{(-1)^k d_0}{d} \right) A((2^l n)^2 d^{-2} d_0),
  \end{align*}
  where the second equality holds because the terms for even $d$ in the sum vanish.
  This implies $S_{d_0}(\phi) = A(d_0) f \in S_{2k}^{\text{new},\epsilon_2}(2)$,
  which completes the proof for the case $\phi \in J_{k+\frac{r+1}{2},D_r}^{\text{cusp, new}}$.
  
  Next, we consider case (ii), where $\phi \in J_{k+\frac{r+1}{2},D_r}^{\text{cusp, old}}$, and
  show that $S_{d_0}(\phi) \in S_{2k}^{\epsilon_1}(1)$.
  
  In this case, the corresponding modular form $f$ belongs to $S_{2k}^{\epsilon_1}(1)$.
  The form $\phi$ arises as a Fourier--Jacobi coefficient of the Ikeda lift of $f$ (see $\S$\ref{sec:Ikeda_lift}).
  Let $h = \sum_{n} b(n) q^n \in S_{k+\frac12}^{+}(4)$ be a Hecke eigenform corresponding to $f$
  via the Shimura correspondence,
  where the sum is taken over $n$ such that $n \equiv 0, (-1)^k \!\! \mod 4$.
  Let $\lambda_2$ be the Hecke eigenvalue of $h$ with respect to the Hecke operator $T(2^2)$.
  We define $h^* = \sum_{n \equiv 0, (-1)^k \!\! \mod 4} b^*(n) q^n$ with coefficients
  \begin{equation}\label{id:b^*_b}
    b^*(n) := b(4n) - b(n) \left\{ \left( \frac{8}{r} \right) 2^{k-1} + \lambda_2 \right\}.
  \end{equation}
  Since $h^*$ is a linear combination of $h$ and $h|U_k(4)$, it follows from \cite{UY} that
  $h^* \in S_{k+\frac12}^{+}(8)$.
  Furthermore, as $h$ is a Hecke eigenform, we have the relation
  \begin{equation}\label{id:b_2}
    \lambda_2 b(n) = b(4n) + \left( \frac{(-1)^k n}{2} \right) 2^{k-1} b(n) + 2^{2k-1} b(n/4).
  \end{equation}
  If $n \equiv 4 - r \mod 8$, then $\left( \frac{(-1)^k n}{2} \right) = \left( \frac{(-1)^{\frac{r+1}{2}} (4-r)}{2} \right) = - \left( \frac{8}{r}\right)$ and $b(n/4) = 0$.
  Consequently,  $b^*(n) = 0$ for $n$ such that $n \equiv 4 - r \mod 8$,
  which implies that $h^*$ belongs to $S_{k+\frac12}^{+,-r}(8)$.
  
  Using the identities  (\ref{id:b^*_b}) and (\ref{id:b_2}), a straightforward calculation yields
  \begin{equation*}
    2 b^*(4n) - 2^{k-1} \left( \frac{8}{r} \right) b^*(n) = \lambda_2 b^*(n)
  \end{equation*}
  for $n \equiv -r \mod 8$, and
  \begin{equation}\label{id:b^*_Hecke_2}
    b^*(4n) + 2^{2k-1}  b^*(n/4) = \lambda_2 b^*(n)
  \end{equation}
  for $n \equiv 0 \mod 4$.
  
  The Hecke eigenvalue $\lambda_2$ of $\phi$ at $p=2$ coincides with that of $f$; 
  hence, we have $\lambda_2 = a_f(2)$.
  
  Recalling the assumption that $(-1)^k d_0$ is a fundamental discriminant with $d_0 \equiv 0 \mod 4$, 
  it follows that $\left( \frac{(-1)^k d_0}{2} \right) = 0$.
  Since the identity (\ref{id:b^*_Hecke_2}) is analogous to (\ref{id:A_Hecke_p}),
  a similar argument to the one used for (\ref{id:A_sum_p}) yields
  \begin{equation}\label{id:b^*_sum_2}
       \sum_{l=0}^u 2^{l(k-1)} \left( \frac{(-1)^k d_0}{2^l} \right) b^*(2^{2(u-l)} d_0)
    = b^*(d_0) a_f(2^u),
  \end{equation}
  where $\left( \frac{(-1)^k d_0}{2^l} \right)$ equals $1$ if $l = 0$ and $0$ if $l > 0$.
  
  The functions $h^*$, $h$ and $\phi$ have the same Hecke eigenvalues for all odd primes $p$.
  There exists $\psi \in J_{k+\frac{r+1}{2},D_r}^{\text{cusp}}$
  such that $\mathcal{J}_{r,k}^{\text{even}}(\psi) = h^*$.
  Then, $\phi$ and $\psi$ have the same Hecke eigenvalues for all odd primes $p$.
  By the strong multiplicity one theorem for $S_{2k}(2)$
  and Theorem~\ref{thm:main}, it follows that $\psi$ coincides with $\phi$ up to a constant multiple.
  Consequently, $g = \mathcal{J}_{r,k}^{\text{even}}(\phi)$ and $h^* (= \mathcal{J}_{r,k}^{\text{even}}(\psi))$ also 
  coincide up to a constant multiple.
  Thus, there exists a constant $c$ such that $A(m) = c\, b^*(m)$ for all $m \in \mathbb{N}$.
  It follows that the identity (\ref{id:b^*_sum_2}) remains valid if $b^*(m)$ is replaced by $A(m)$.
  Following an argument similar to the case $\phi \in J_{k+\frac{r+1}{2},D_r}^{\text{cusp, new}}$,
  we conclude that the identity (\ref{id:A_sum_odd}) holds for any $n \in \mathbb{N}$ and any fundamental discriminant $m = (-1)^k d_0$
  such that $d_0 \equiv 0 \!\! \mod 4$.
  Therefore, we obtain $S_{d_0}(\phi) = A(d_0) f \in S_{2k}^{\epsilon_1}(1)$ for $\phi \in J_{k+\frac{r+1}{2},D_r}^{\text{cusp, old}}$.
  
  Finally, we consider case (iii), where $\phi = E_{k+\frac{r+1}{2},D_r}$. 
  We write its Fourier expansion as
  \begin{equation*}
    E_{k+\frac{r+1}{2},D_r}(\tau,z) = \sum_{n' \in \Z} \sum_{r' \in D_r^\sharp} e_{r,k}(n',r') e(n' \tau + \beta(r',z)),
  \end{equation*}
  where the explicit formula for $e_{r,k}(n',r')$ was established in Theorem~\ref{thm:Jacobi_Eisenstein}.

  If $(r,k) \neq (5,1)$, then $E_{k+\frac{r+1}{2},D_r}$ corresponds to the Eisenstein series $E_{2k} \in M_{2k}(1)$, 
  which follows from Proposition~\ref{prop:J_old}.
  In the case $(r,k) = (5,1)$, the series $E_{4,D_5}$ corresponds to $E_2^{(2)} \in M_2^{\text{new}}(2)$ 
  by Theorem~\ref{thm:k_1}.
  For any pair $(r,k)$, an argument similar to the one used above yields the identity (\ref{id:A_sum_odd}) for 
  $g = \mathcal{J}_{r,k}^{\text{even}}(E_{k+\frac{r+1}{2}, D_r}) = \sum A(m) q^m$, 
  for any $n \in \mathbb{N}$ and any fundamental discriminant $(-1)^k d_0$ such that $d_0 \equiv 0 \!\! \mod 4$.
  
  We first consider the case $(r,k) \neq (5,1)$; 
  the case $(r,k) = (5,1)$ can be treated similarly.

  Let $\mathbb{G}_{2k} = \frac{\zeta(1-2k)}{2} E_{2k}$.
  We take the Fourier expansion of $\mathbb{G}_{2k}$ as
  \begin{equation*}
    \mathbb{G}_{2k}(\tau) = \frac{\zeta(1-2k)}{2} +  \sum_{n \geq 1} \sigma_{2k-1}(n) q^n,
  \end{equation*} 
  where $\sigma_m(n) = \sum_{d | n} d^m$.
  Then, the Fourier coefficient of $S_{d_0}(E_{k+\frac{r+1}{2},D_r})$ at $q^n$ is $A(d_0) \sigma_{2k-1}(n)$
  for any $n \geq 1$, which follows from the identity (\ref{id:A_sum_odd}).
  
  To establish the identity $S_{d_0}(E_{k+\frac{r+1}{2}, D_r}) = A(d_0) \mathbb{G}_{2k}$, 
  it suffices to show that the constant term on both sides coincide. 
  
  By the definition of $S_{d_0}$, 
  the constant term of $S_{d_0}(E_{k+\frac{r+1}{2}, D_r})$ is 
  $\frac{A(0)}{2 (1 + \left( \frac{8}{r} \right) 2^k) } L(1-k,\left( \frac{(-1)^k d_0}{ * }\right))$. 
   On the other hand, the constant term of $A(d_0) \mathbb{G}_{2k}$ is
   $A(d_0) \frac{\zeta(1-2k)}{2}$.
   Thus, it remains to verify that
   \begin{equation}\label{id:A_constant}
     \frac{A(d_0)}{A(0)}
     =
     \frac{L(1-k,\left( \frac{(-1)^k d_0}{ * }\right))}{(1 + \left( \frac{8}{r} \right) 2^k)\zeta(1-2k)}.
   \end{equation}
   Recall that $A(0) = e_{r,k}(0,0)$ and $A(d_0) = e_{r,k}(n',r')$ with $d_0 = 8(n' - \beta(r'))$.
   Using Theorem~\ref{thm:Jacobi_Eisenstein} and Lemma~\ref{lem:H*_formula}, 
   together with the assumption on $d_0$,
   we obtain $A(0) = 1$ and $A(d_0) =  \frac{H(k,(-1)^k d_0)}{(1 + \left( \frac{8}{r} \right) 2^k)\zeta(1-2k)}$.
   Since $H(k,(-1)^k d_0) = L(1-k,\left( \frac{(-1)^k d_0}{ \cdot }\right))$, the identity (\ref{id:A_constant}) follows immediately.
   Therefore, we conclude that $S_{d_0}(E_{k+\frac{r+1}{2}, D_r}) = A(d_0) \mathbb{G}_{2k} \in M_{2k}(1)$,
   which proves case (iii).
    
    Since $S_{d_0}$ is a linear map, 
    for any $\phi \in J_{k+\frac{r+1}{2}, D_r} = 
      J_{k+\frac{r+1}{2}, D_r}^{\text{cusp, new}} \oplus J_{k+\frac{r+1}{2}, D_r}^{\text{cusp, old}} \oplus \C E_{k+\frac{r+1}{2},D_r}$,
     we obtain $S_{d_0}(\phi) = A(d_0) f$
    with $f \in M_{2k}^{\text{new},\epsilon_2}(2) \oplus M_{2k}^{\epsilon_1}(1)$.
   This completes the proof of Theorem~\ref{thm:map_J_even}.
\end{proof}

Finally, we provide the proof of Theorem~\ref{thm:map_J_odd}, which addresses the case where the weight
of the Jacobi form is odd.
\begin{proof}[Proof of Theorem~\ref{thm:map_J_odd}]
The proof proceeds similarly to that of Theorem~\ref{thm:map_J_even}. 
By virtue of the identity~(\ref{id:A^*_Hecke}), 
for any odd prime $p$ and any $m \equiv -r \mod 8$, 
we have
  \begin{equation*}
    \lambda_p A(m) = \left( \frac{-4}{p} \right)
    \left\{ 
    A(p^2 m) + \left( \frac{(-1)^k m }{p}\right) p^{k-1} A(m) + p^{2k-1} A(m/p^2) 
    \right\},
  \end{equation*}
where $\lambda_p$ denotes
  the Hecke eigenvalue of $\phi$ with respect to $T^J(p)$.

Let $f \in S_{2k}^{\text{new},\epsilon_2}(2)$ be a Hecke eigenform corresponding to 
$\phi \in J_{k+\frac{r+1}{2},D_r}^{\text{cusp, new}}$
via Theorem~\ref{thm:J_new}.
Let $f = \sum a_f(n) q^n$ be the Fourier expansion of $f$, normalized such that $a_f(1) = 1$.
  
  By an argument analogous to the derivation of (\ref{id:A_sum_odd}), 
  if $m$ is not divisible by $p^2$ for any odd prime $p$, it follows that
    \begin{equation*}
    \left( \frac{-4}{n_1} \right) \sum_{d | n_1} d^{k-1} \left( \frac{(-1)^k m}{d} \right) A(n_1^2 d^{-2} m) 
    =  A(m) a_f(n_1)
  \end{equation*}
  for any odd integer $n_1$.
  
  On the other hand, since $f \in S_{2k}^{\text{new},\epsilon_2}(2)$, it satisfies
  $f|_{2k}\left( \begin{smallmatrix} 0 & -1 \\ 2 & 0\end{smallmatrix} \right) = \epsilon_2 i^{-2k} f$.
  It is known that 
  $f|_{2k}\left( \begin{smallmatrix} 0 & -1 \\ 2 & 0\end{smallmatrix} \right) = - 2^{1-k} a_f(2) f$ (see ~\cite[Introduction]{zhang}), 
  which yields 
  \begin{equation*}
  a_f(2) = - \epsilon_2 i^{-2k} 2^{k-1} 
   =  \left( \frac{-8}{r} \right) (-1)^k 2^{k-1}
   =  \left( \frac{-8}{r}\right) (-1)^{\frac{r-1}{2}} 2^{k-1} 
   =  \left( \frac{8}{r} \right) 2^{k-1}.
  \end{equation*}
  Here, we used the assumption $k + \frac{r+1}{2} \equiv 1 \mod 2$.
  Since the arithmetic function $a_f(n)$ is multiplicative and $a_f(2^{e_2}) = a_f(2)^{e_2}$ holds for $f \in S_{2k}^{\text{new}}(2)$, 
  we obtain
  \begin{align*}
    \left( \left( \frac{8}{r} \right) 2^{k-1} \right)^{e_2}
      \left( \frac{-4}{n_1} \right) \sum_{d | n_1} d^{k-1} \left( \frac{(-1)^k d_0}{d} \right) A(n^2 d^{-2} d_0) 
    &=  A(d_0) a(2^{e_2}) a(n_1) \\
    & =  A(d_0) a(2^{e_2} n_1)
  \end{align*}
  for any fundamental discriminant $(-1)^{k-1} d_0$ such that $d_0 \equiv -r \mod 8$ and $d_0 > 0$.
  
  This implies $S_{d_0}(\phi) = A(d_0) f \in S_{2k}^{\text{new},\epsilon_2}(2)$ for any Hecke eigenform $\phi \in J_{k+\frac{r+1}{2},D_r}^{\text{cusp,  new}}$.
  By the linearity of the map $S_{d_0}$, 
  we conclude the proof of Theorem~\ref{thm:map_J_odd}.
\end{proof}

\vspace{1cm}

\noindent
Department of Mathematics, Joetsu University of Education,\\
1 Yamayashikimachi, Joetsu, Niigata 943-8512, JAPAN\\
e-mail hayasida@juen.ac.jp


\begin{thebibliography}{99}
 \bibitem[Aj 15]{ali}
   A.~Ajouz: Hecke operators on Jacobi forms of lattice index and the relation to elliptic modular forms,
   Ph.D. thesis, University of Siegen. \\
   URL: https://www.researchgate.net/publication/285601134
 \bibitem[Ar 98]{ara}
   T.~Arakawa: K\"ocher-Maass Dirichlet series corresponding to Jacobi forms and Cohen Eisenstein series
   {\it Comment. Math. Univ. St. Paul.} {\bf 47} (1998), no. 1, 93--122.
  \bibitem[Boe 83]{Bo}
   S.~Boecherer: \"Uber die Fourier--Jacobi-Entwicklung Siegelscher Eisensteinreihen, 
   {\it Math.\ Z.}\ {\bf 183} (1983), 21--46.
 \bibitem[Boy 11]{boy}
   H.~Boylan:
   Jacobi forms, quadratic modules and Weil representations over number Fields.
   PhD thesis, (2011). URL:  http://dokumentix.ub.uni-siegen.de/opus/volltexte/2012/597/.
 \bibitem[B-S 23]{BS}
  H.~Boylan and N.-P.~Skoruppa: 
  Jacobi forms of lattice index I. Basic theory. Preprint (2023) arXiv:2309.04738.
 \bibitem[B-K 00]{BK}
 S.~Breulmann and M.~Kuss:
  On a conjecture of Duke-Imamo\=glu.
  {\it Proc. Amer. Math. Soc.} \ {\bf 128} (2000), no. 6, 1595--1604.
 \bibitem[Br 06]{bringmann}
   K.~Bringmann:
   Lifting maps from a vector space of Jacobi cusp forms to a subspace of elliptic modular forms,
   {\it Math. Z.} \ {\bf 253} (2006), no. 4, 735--752.
 \bibitem[Co 75]{cohen}
  H.~Cohen: Sums involving the values at negative integers of L-functions of quadratic characters,
  {\it Math. Ann.} \ {\bf 217}, (1975), 271--285.
 \bibitem[Co 93]{cohen:computation}
  H.~Cohen: A course in computational algebraic number theory,
  {\it Grad. Texts in Math.}, \ {\bf 138} Springer-Verlag, (1993)
  {\it Math. Ann.} \ {\bf 217}, (1975), 271--285.  
\bibitem[Ha 11]{FJlift}
 S.~Hayashida~: Fourier--Jacobi expansion and the Ikeda lift,
    {\it Abh. Math. Semin. Univ. Hambg.} {\bf 81} (2011), 1--17.
\bibitem[Ik 01]{Ik}
  T.~Ikeda : On the lifting of elliptic cusp forms to Siegel cusp forms of degree 2n, 
         {\it Ann.\ of Math.\ (2)}\ {\bf 154}, no.3 (2001), 641--681.
\bibitem[Ko 80]{Ko}
  W.~Kohnen : Modular forms of half integral weight on $\Gamma_{0}(4)$, 
	 {\it Math,\ Ann.}\ {\bf 248} (1980), 249--266.
\bibitem[Mi 89]{Miyake}
  T.~Miyake:  Modular Forms, Springer-Verlag, Berlin, (1989).
\bibitem[Mo 19a]{Mocanu_JN}
  A.~Mocanu: Poincar\'e and Eisenstein series for Jacobi forms of lattice index.
  {\it J. Number Theory}\ {\bf  204} (2019), 296--333.
\bibitem[Mo 19b]{Mocanu}
  A.~Mocanu: On Jacobi forms of lattice index, Ph.D. thesis, University of Nottingham (2019),  158 pages.
\bibitem[Mu 89]{Murase}
  A.~Murase: L-functions attached to Jacobi forms of degree n. I. The basic identity.,
  J. Reine Angew. Math. {\bf 401} (1989), 122--156.
\bibitem[S-Z 88]{SZ}
  N.-P.~Skoruppa and D.~Zagier: Jacobi forms and a certain space of modular forms,
  Invent. Math. {\bf 94}, (1988), 113--146.
 \bibitem[U-Y 10]{UY}
   M.~Ueda and S.~Yamana:
   On newforms for Kohnen plus spaces.
   {\it Math. Z.} \ {\bf 264} (2010), no. 1, 1--13.
 \bibitem[W 19]{williams}
  K.S.~Williams:
  Some formulas of Liouville in the spirit of Gauss.
  {\it Math. Mag.} \ {\bf 92} (2019), no. 2, 83--90.   
 \bibitem[Ya 14]{YY}
  Y.~Yang: Modular forms of half-integral weights on SL(2,$\Z$), Nagoya Math. J. {\bf 215} (2014), 1--66.
 \bibitem[Za 75]{zagier}
  D.~Zagier: 
  Nombres de classes et formes modulaires de poids 3/2,
  {\it C. R. Acad. Sci. Paris S\'{e}r. A-B 281} (1975), A883--A886.
 \bibitem[Zh 13]{zhang}
  Y.~Zhang: Eigenvalue of Fricke involution on new forms of level $4$ and of trivial character,
  Preprint (2013) arXiv:1308.3417v1.
 \bibitem[Zi 89]{Zi} 
  C.~Ziegler: Jacobi forms of higher degree, {\it Abh.\ Math.\ Sem.\ Univ.\  
  Hamburg.}\ {\bf 59} (1989), 191--224.  
\end{thebibliography}
\end{document}